\newcommand{\leqnomode}{\tagsleft@true\let\veqno\@@leqno}
\newcommand{\reqnomode}{\tagsleft@false\let\veqno\@@eqno}
\numberwithin{equation}{section}
\numberwithin{figure}{section}
\theoremstyle{plain}
\newtheorem{thm}{\protect\theoremname}[section]
  \theoremstyle{plain}
  \newtheorem{lem}[thm]{\protect\lemmaname}
  \theoremstyle{plain}
  \newtheorem{cor}[thm]{\protect\corollaryname}
  \theoremstyle{definition}
\let\originalleft\left
\let\originalright\right
\renewcommand{\left}{\mathopen{}\mathclose\bgroup\originalleft}
\renewcommand{\right}{\aftergroup\egroup\originalright}
\title{Plethystic formulas for permutation enumeration}
\author[1]{Ira M.\ Gessel\thanks{\tt{gessel@brandeis.edu}\\
\null\hskip17pt Supported by a grant from the Simons Foundation (\#427060, Ira Gessel). }}
\author[2]{Yan Zhuang\thanks{\tt{yazhuang@davidson.edu}}}
\affil[1]{Department of Mathematics, Brandeis University} 
\affil[2]{Department of Mathematics and Computer Science, Davidson College}
\date{\vspace{-0.15in}\today}
  \providecommand{\corollaryname}{Corollary}
  \providecommand{\lemmaname}{Lemma}
  \providecommand{\problemname}{Problem}
\providecommand{\theoremname}{Theorem}
\newcommand{\commentcolor}{OrangeRed}
\definecolor{light-gray}{gray}{0.6}
\let\oldthefootnote\thefootnote
\newcommand\cfootnote[1]{%
  \def\thefootnote{\color{\commentcolor}\oldthefootnote}%
  \footnote{\color{\commentcolor}#1}%
    \let\thefootnote\oldthefootnote}
   \def\MR#1{}
\begin{document}
\global\long\def\Des{\operatorname{Des}}
\global\long\def\pk{\operatorname{pk}}
\global\long\def\lpk{\operatorname{lpk}}
\global\long\def\val{\operatorname{val}}
\global\long\def\Comp{\operatorname{Comp}}
\global\long\def\udr{\operatorname{udr}}
\global\long\def\des{\operatorname{des}}
\global\long\def\dasc{\operatorname{dasc}}
\global\long\def\ddes{\operatorname{ddes}}
\global\long\def\lddes{\operatorname{lddes}}
\global\long\def\br{\operatorname{br}}
\global\long\def\maj{\operatorname{maj}}
\global\long\def\st{\operatorname{st}}
\global\long\def\inv{\operatorname{inv}}
\global\long\def\fix{\operatorname{fix}}
\global\long\def\ps{\operatorname{ps}}
\global\long\def\GL{\operatorname{GL}}

\maketitle
\begin{abstract}
We prove several general formulas for the distributions of various permutation statistics over any set of permutations whose quasisymmetric generating function is a symmetric function. Our formulas involve certain kinds of plethystic substitutions on quasisymmetric generating functions, and the permutation statistics we consider include the descent number, peak number, left peak number, and the number of up-down runs. We apply these results to cyclic permutations, involutions, and derangements, and more generally, to derive formulas for counting all permutations by the above statistics jointly with the number of fixed points and jointly with cycle type.
A number of known formulas are recovered as special cases of our results, including formulas of D\'esarm\'enien--Foata, Gessel\textendash Reutenauer, Stembridge, Fulman, Petersen, Diaconis\textendash Fulman\textendash Holmes, Zhuang, and Athanasiadis.
\end{abstract}
\textbf{\small{}Keywords:}{\small{} permutation statistics, descents,
peaks, cycle type, symmetric functions, plethysm}{\let\thefootnote\relax\footnotetext{2010 \textit{Mathematics Subject Classification}. Primary 05A15; Secondary 05A05, 05E05.}}

\tableofcontents{}

\section{Introduction}
\label{s-intro}
Let $\pi=\pi(1)\pi(2)\cdots\pi(n)$ be an element of the symmetric
group $\mathfrak{S}_{n}$ of permutations of the set $[n]\coloneqq\{1,2,\dots,n\}$.
We say that $i\in[n-1]$ is a \textit{descent} of $\pi$ if $\pi(i)>\pi(i+1)$.
Let $\des(\pi)$ denote the number of descents of $\pi$ and let $\maj(\pi)$
denote the sum of all descents of $\pi$. The \textit{descent number}
$\des$ and \textit{major index} $\maj$ are classical permutation
statistics whose study dates back to Percy MacMahon \cite{macmahon}.

The distribution of the descent number over $\mathfrak{S}_{n}$ 
for $n\ge1$ is encoded by the $n$th \textit{Eulerian polynomial}\footnote{In some works, $A_n(t)$ is instead defined to be $\sum_{\pi\in\mathfrak{S}_{n}}t^{\des(\pi)}$.}
\[
A_{n}(t)\coloneqq\sum_{\pi\in\mathfrak{S}_{n}}t^{\des(\pi)+1},
\]
and the joint distribution of the descent number and major index by
the $n$th $q$-\textit{Eulerian polynomial }
\[
A_{n}(q,t)\coloneqq\sum_{\pi\in\mathfrak{S}_{n}}q^{\maj(\pi)}t^{\des(\pi)+1},
\]
with $A_0(t) = A_0(q,t) =1$. MacMahon \cite[Vol. 2, Section IX]{macmahon} proved a formula\footnote{See \cite[Section 3]{Gessel2016} for more on MacMahon's work.} of which a special case is
\begin{equation}
\frac{A_{n}(q,t)}{(1-t)(1-tq)\cdots(1-tq^{n})}=\sum_{k=0}^{\infty}[k]_{q}^{n}t^{k},\label{e-qeulerian}
\end{equation}
where
\[
[k]_{q}\coloneqq1+q+q^{2}+\cdots+q^{k-1}.
\]
(This formula is often attributed to Carlitz \cite{carlitz1975}.)
Note that (\ref{e-qeulerian}) reduces to the classical identity 
\begin{equation}
\frac{A_{n}(t)}{(1-t)^{n+1}}=\sum_{k=0}^{\infty}k^{n}t^{k}\label{e-eulerian}
\end{equation}
(which is sometimes used as the definition for Eulerian polynomials). 

Equation (\ref{e-qeulerian}) allows one to compute the joint distribution of the descent number and major index over $\mathfrak{S}_{n}$, but one may also want to study the joint distribution of these statistics, and others, over certain interesting subsets of $\mathfrak{S}_{n}$. 

Our work is concerned with \emph{descent statistics}: permutation statistics that are determined by the descent set. Examples of descent statistics include the descent number and major index, as well as the peak number, the left peak number, and the number of up-down runs, which we will study in this paper. Quasisymmetric functions---which are certain power series in infinitely many variables that generalize symmetric functions---encode descent sets, and in many cases, generating functions for descent statistics can be extracted in a useful way from quasisymmetric functions. For example, 
if $\Pi$ is a subset of $\mathfrak{S}_{n}$
with quasisymmetric generating function $Q(\Pi)$  (see Section \ref{ss-qsymgf} for relevant definitions),
then 
\begin{align}
\frac{\sum_{\pi\in\Pi}t^{\des(\pi)+1}q^{\maj(\pi)}}{(1-t)(1-tq)\cdots(1-tq^{n})} & =\sum_{k=0}^{\infty}\phi_{k}(Q(\Pi))t^{k}\label{e-desmaj}
\end{align}
where $\phi_{k}(f)\coloneqq f(q^{k-1},q^{k-2},\dots,1)$.  (See \cite[Section 4]{Gessel1984}.)
If $Q(\Pi)$ is symmetric, then we can use operations on symmetric functions to extract generating function formulas for permutation statistics. 

In 1993, the first author and Christophe Reutenauer \cite{Gessel1993} proved that the set of permutations with a prescribed cycle type has a symmetric quasisymmetric generating function, and used this fact and (\ref{e-desmaj}) to derive formulas for the joint distribution of
$\des$ and $\maj$ over cyclic permutations, involutions, and derangements. Later, Jason Fulman \cite[Theorem 1]{Fulman1998} used the work of Gessel and Reutenauer to derive a formula for the joint distribution of $\des$, $\maj$,
and cycle type over $\mathfrak{S}_{n}$. These results are notable
because the descent number and major index are statistics which encode
properties of a permutation in one-line representation, 
and one would not expect such statistics to be compatible with the cycle structure. See \cite{Diaconis1995, Elizalde2011, Elizalde2019, Fulman2001, Han2009, Moustakas2020, Poirier1998, Reiner1993, Stanley2007, Thibon2001} for related work.

Given $\pi\in\mathfrak{S}_{n}$, we say that $i\in\{2,\dots,n-1\}$ is a \textit{peak} of $\pi$ if $\pi(i-1)<\pi(i)>\pi(i+1)$, and that $i\in[n-1]$ is a \textit{left peak} of $\pi$ if $i$ is a peak or if $i=1$ and $\pi(1)>\pi(2)$. A \textit{birun}\footnote{Biruns are also commonly called ``alternating runs''.} of $\pi$ is a maximal monotone consecutive subsequence, and an up-down run of $\pi$ is a birun of $\pi$ or the letter $\pi(1)$ if $\pi(1)>\pi(2)$. Let $\pk(\pi)$ denote the number of peaks of $\pi$, $\lpk(\pi)$ the number of left peaks of $\pi$, and $\udr(\pi)$ the number of up-down runs\footnote{We note that $\udr(\pi)$ is also equal to the length of the longest alternating subsequence of $\pi$, which was studied in depth by Stanley \cite{Stanley2008}.} of $\pi$. For example, if $\pi=71462853$, the peaks of $\pi$ are $4$ and $6$; the left peaks of $\pi$ are $1$, $4$, and $6$; and the up-down runs of $\pi$ are $7$, $71$, $146$, $62$, $28$, and $853$. Thus, $\pk(\pi)=2$, $\lpk(\pi)=3$, and $\udr(\pi)=6$. 

The \textit{peak number} $\pk$, the \textit{left peak number} $\lpk$, and the \textit{number of up-down runs} $\udr$ are\textemdash like the descent number and major index\textemdash statistics which deal with permutations in one-line representation. All of these statistics have been well-studied (see, e.g., \cite{Gessel2018,Zhuang2016,Zhuang2017} by the present authors, and the references therein), but to our knowledge the only known result concerning a distribution of any of these statistics while refining by cycle structure is due to Diaconis, Fulman, and Holmes, who derived a formula \cite[Corollary 3.8]{Diaconis2013} for the joint distribution of the peak number and cycle type over $\mathfrak{S}_{n}$ in their analysis of casino shelf shuffling machines.

The main results of our paper are general formulas, analogous to \eqref{e-desmaj}, which can be used to study the joint distribution of $\pk$ and $\des$, the joint distribution of $\lpk$ and $\des$, and the distribution of $\udr$ over any subset of $\mathfrak{S}_{n}$ whose quasisymmetric generating function is symmetric. These formulas involve \textit{plethysm}, an operation on symmetric functions originating in the representation theory of the general linear groups $\GL_{n}(\mathbb{C})$ and the symmetric groups $\mathfrak{S}_{n}$. In recent decades, plethysm has been extended to more general formal power series rings and has found numerous applications within algebraic combinatorics, e.g., in the theory of Macdonald polynomials. See Macdonald \cite[Chapter I, Section 8]{Macdonald1995}, Stanley \cite[Chapter 7, Appendix 2]{Stanley2001}, Haglund \cite[Chapter 1]{Haglund2008}, and Loehr\textendash Remmel \cite{Loehr2011} for introductory references on plethysm.

The structure of our paper is as follows. In Section 2, we review some relevant results from the basic theory of symmetric and quasisymmetric functions, give an introductory account of plethystic calculus, prove some preliminary lemmas involving plethystic substitutions, and give ribbon and power sum expansions of several symmetric function identities which are relevant for the study of peaks, left peaks, and up-down runs. We prove our main results in Section 3. The rest of the paper presents applications of our main results: Section 4 concerns cyclic permutations, Section 5 concerns involutions, Section 6 concerns derangements and (more generally) counting permutations by fixed points, and Section 7 concerns counting permutations by cycle type. We recover as special cases the formulas of Fulman and Diaconis\textendash Fulman\textendash Holmes mentioned above\textemdash as well as various formulas by D\'esarm\'enien--Foata, Gessel\textendash Reutenauer, Stembridge, Petersen, Zhuang, and Athanasiadis\textemdash but most of our results are new and pertain to permutation statistic distributions that have been previously unstudied. We conclude in Section 8 with a brief discussion of future work.

\section{Preliminaries}

\subsection{Review of basic symmetric function theory}

We assume familiarity with basic definitions from the theory of symmetric
functions at the level of Stanley \cite[Chapter 7]{Stanley2001}.
In this section we establish notation and review some
elementary facts which will be needed for our work. See also Macdonald
\cite[Chapter 1]{Macdonald1995}, Sagan \cite[Chapter 4]{Sagan2001},
and Grinberg\textendash Reiner \cite[Section 2]{Grinberg2020} for
other treatments of basic symmetric function theory.

We use the notations $\lambda\vdash n$ and $\left|\lambda\right|=n$
to indicate that $\lambda$ is a partition of $n$, and we let $l(\lambda)$
denote the number of parts of $\lambda$. We write $\lambda=(1^{m_{1}}2^{m_{2}}\cdots)$
to mean that $\lambda$ has $m_{1}$ parts of size 1, $m_{2}$ parts
of size 2, and so on; alternatively, we write $\lambda=(\lambda_{1},\lambda_{2},\dots,\lambda_{r})$
to mean that $\lambda$ has parts $\lambda_{1},\lambda_{2},\dots,\lambda_{r}$.
For example, $\text{\ensuremath{\lambda}=}(1^{2}3^{4}4^{1})$ and
$\lambda=(4,3,3,3,3,1,1)$ denote the same partition, and here we
have $\left|\lambda\right|=18$ and $l(\lambda)=7$.

Let $\Lambda$ denote the $\mathbb{Q}$-algebra of symmetric functions in the variables $x_{1},x_{2},\dots$. We recall the important bases for $\Lambda$: the monomial symmetric functions $m_{\lambda}$, the complete symmetric functions $h_{\lambda}$, the elementary symmetric
functions $e_{\lambda}$, the power sum symmetric functions $p_{\lambda}$, and the Schur functions $s_{\lambda}$. (As usual, we write $h_{(n)}$
as $h_{n}$, $e_{(n)}$ as $e_{n}$, and $p_{(n)}$ as $p_{n}$.)

We will also work with symmetric functions with coefficients involving additional variables such as $t$, $y$, $z$, and $\alpha$, and symmetric functions of unbounded degree like $H(z)$, defined below.

Define $H(z)\coloneqq\sum_{n=0}^{\infty}h_{n}z^{n}$ and $E(z)\coloneqq\sum_{n=0}^{\infty}e_{n}z^{n}$
to be the ordinary generating functions for the  $h_{n}$ and $e_{n}$, respectively.
It is well known \cite[Equations (7.11) and (7.12)]{Stanley2001}
that 
\[
H(z)=\prod_{n=1}^{\infty}(1-x_{n}z)^{-1}\quad\text{and}\quad E(z)=\prod_{n=1}^{\infty}(1+x_{n}z),
\]
from which the identity
\begin{equation}
H(z)=E(-z)^{-1}\label{e-he}
\end{equation}
follows. It is then a consequence of \cite[Proposition 7.7.4]{Stanley2001}
that
\begin{equation}
H(z)=\exp\Big(\sum_{k=1}^{\infty}\frac{p_{k}}{k}z^{k}\Big)\label{e-hp}
\end{equation}
and 
\begin{equation}
E(z)=\exp\Big(\sum_{k=1}^{\infty}(-1)^{k-1}\frac{p_{k}}{k}z^{k}\Big).\label{e-ep}
\end{equation}
We adopt the notation $H\coloneqq H(1)=\sum_{n=0}^{\infty}h_{n}$
and $E\coloneqq E(1)=\sum_{n=0}^{\infty}e_{n}$.

For a partition $\lambda=(1^{m_{1}}2^{m_{2}}\cdots)$, let
$z_{\lambda}\coloneqq1^{m_{1}}m_{1}!\,2^{m_{2}}m_{2}!\cdots$.
An important property of the numbers $z_\lambda$ is that if $\lambda$ is a partition of $n$ then $n!/z_\lambda$ is the number of permutations in $\mathfrak{S}_n$ of cycle type $\lambda$, i.e., permutations in which the lengths of the cycles correspond to the parts of $\lambda$.

The following lemma is helpful in working with expressions like \eqref{e-hp}. We omit the proof, which is a straightforward computation (cf.~Macdonald \cite[p.~25]{Macdonald1995}). 

\begin{lem}
\label{l-expsum}
For any sequence $a_1,a_2,\dots$ we have
\begin{equation*}
\exp\Big(\sum_{k=1}^{\infty}\frac{a_{k}}{k}x^k \Big)=\sum_{\lambda}\frac{x^{\left|\lambda\right|}}{z_\lambda}\prod_{k=1}^{l(\lambda)}a_{\lambda_k},
\end{equation*}
where the sum on the right is over all partitions $\lambda$.
\end{lem}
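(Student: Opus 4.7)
The plan is a direct computation: expand the exponential as a power series, expand the resulting power using the multinomial theorem, and reorganize the terms by partition. The identity is a classical one (it is the exponential/power-sum version of the cycle index series for the trivial action), so I would not expect any real obstacle beyond bookkeeping.

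First I would write $\exp\Big(\sum_{k\ge 1}\frac{a_k}{k}x^k\Big)=\sum_{n\ge 0}\frac{1}{n!}\Big(\sum_{k\ge 1}\frac{a_k}{k}x^k\Big)^{\!n}$. Next, I would apply the multinomial theorem to the inner $n$th power, indexing by sequences $(m_1,m_2,\ldots)$ of nonnegative integers with only finitely many nonzero entries satisfying $m_1+m_2+\cdots=n$. This produces
\begin{equation*}
\sum_{n\ge 0}\frac{1}{n!}\sum_{\substack{(m_1,m_2,\ldots)\\ \sum m_k=n}}\binom{n}{m_1,m_2,\ldots}\prod_{k\ge 1}\left(\frac{a_k}{k}\right)^{\!m_k}\!x^{k m_k}.
\end{equation*}

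The key simplification is that $\frac{1}{n!}\binom{n}{m_1,m_2,\ldots}=\prod_{k\ge 1}\frac{1}{m_k!}$, so the factor of $1/n!$ disappears and the constraint $\sum m_k=n$ can be dropped once we sum over all $n$. Combining this with $\prod_{k\ge 1}\left(a_k/k\right)^{m_k}=\prod_{k\ge 1}a_k^{m_k}/k^{m_k}$, the expression becomes
\begin{equation*}
\sum_{(m_1,m_2,\ldots)}\prod_{k\ge 1}\frac{a_k^{m_k}\, x^{k m_k}}{k^{m_k} m_k!}.
\end{equation*}

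Finally I would identify each sequence $(m_1,m_2,\ldots)$ with the partition $\lambda=(1^{m_1}2^{m_2}\cdots)$. Under this bijection, $|\lambda|=\sum_{k}k m_k$, $z_\lambda=\prod_{k}k^{m_k}m_k!$, and $\prod_{k=1}^{l(\lambda)}a_{\lambda_k}=\prod_{k\ge 1}a_k^{m_k}$ (since each part of size $k$ contributes a factor $a_k$). Substituting these yields the right-hand side $\sum_\lambda \frac{x^{|\lambda|}}{z_\lambda}\prod_{k=1}^{l(\lambda)}a_{\lambda_k}$, completing the proof. The only subtlety is making sure that convergence issues are treated formally (this is an identity of formal power series in $x$ with coefficients in $\mathbb{Q}[a_1,a_2,\ldots]$), which justifies interchanging the sums above.
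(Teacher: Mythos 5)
Your proof is correct and is exactly the ``straightforward computation'' that the paper alludes to but omits (citing Macdonald, p.~25): expand the exponential, apply the multinomial theorem, and reindex the sequences $(m_1,m_2,\dots)$ by partitions $\lambda=(1^{m_1}2^{m_2}\cdots)$ using $z_\lambda=\prod_k k^{m_k}m_k!$. No gaps; the remark that everything is formal in $x$ over $\mathbb{Q}[a_1,a_2,\dots]$ appropriately handles the only possible convergence concern.
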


It follows from \eqref{e-hp} and Lemma \ref{l-expsum} that 
$H=\sum_{\lambda}p_{\lambda}/z_{\lambda}$.

Let $\left\langle \cdot\, ,\cdot\right\rangle \colon\Lambda\times\Lambda\rightarrow\mathbb{Q}$
denote the usual scalar product on symmetric functions defined by 
\[
\left\langle m_{\lambda},h_{\tau}\right\rangle =\begin{cases}
1, & \text{if }\lambda=\tau,\\
0, & \text{otherwise,}
\end{cases}
\]
for all partitions $\lambda$ and $\tau$ and extending bilinearly,
that is, by requiring that $\{m_{\lambda}\}$ and $\{h_{\tau}\}$
be dual bases. Then we have
\[
\left\langle p_{\lambda},p_{\tau}\right\rangle =\begin{cases}
z_{\lambda}, & \text{if }\lambda=\tau,\\
0, & \text{otherwise.}
\end{cases}
\]
for all $\lambda$ and $\tau$ \cite[Proposition 7.9.3]{Stanley2011}.

We extend the scalar product in the obvious way to symmetric functions of unbounded degree and symmetric functions with coefficients that involve other variables such as $t$,  $z$, and~$\alpha$. Note that the scalar product is not always defined for unbounded symmetric functions: for example, 
$\left\langle H(z), H\right\rangle=\sum_{n=0}^\infty z^n = (1-z)^{-1}$ but $\left\langle H,H\right\rangle$ is undefined.

\subsection{Plethysm}

The aim of this section is to give a brief, self-contained introduction
to plethysm and to develop a few preliminary lemmas involving plethystic
substitutions that we will need later in this paper. None of the results
in this section are new, but we have chosen to provide proofs in order to help guide readers who are unfamiliar with plethysm.

Let $R$ be a commutative ring containing $\mathbb{Q}$. We define
a \textit{lambda ring} over $R$ to be a commutative $R$-algebra
$A$ together with an $R$-algebra endomorphism $\psi_{i}\colon A\rightarrow A$
for every $i\in\mathbb{P}$ (the set of positive integers) such that
$\psi_{1}$ is the identity map, and $\psi_{i}\circ\psi_{j}=\psi_{ij}$
for every $i,j\in\mathbb{P}$. These homomorphisms $\psi_{i}$ are
called \textit{Adams operations}. We define an operation $\Lambda\times A\rightarrow A$,
where the image of $(f,a)\in\Lambda\times A$ is denoted $f[a]$,
by these two properties:
\begin{enumerate}
\item For any $i\geq1$, $p_{i}[a]=\psi_{i}(a)$.
\item For any fixed $a\in A$, the map $f\mapsto f[a]$ is an $R$-algebra
homomorphism from $\Lambda$ to $A$.
\end{enumerate}
Throughout this paper, we will take our ring $R$ to be $\mathbb{Q}$
and our lambda ring $A$ to be a $\mathbb{Q}$-algebra of formal power
series containing $\Lambda$ as a subalgebra. Moreover, from this
point on we shall take the $i$th Adams operation $\psi_{i}$
to be the result of replacing each variable with its $i$th power.
For a symmetric function $f\in\Lambda$, this means that $p_{i}[f]=\psi_{i}(f(x_{1},x_{2},\dots))=f(x_{1}^{i},x_{2}^{i},\dots)$
but more generally, if $f$ contains other variables, then in $p_i[f]$ they are all 
raised to the $i$th power as well. For example, $p_{i}[qt^{2}p_{m}]=\psi_{i}(qt^{2}p_{m})=q^{i}t^{2i}p_{im}$.
The map $(f,a)\mapsto f[a]$ is called \textit{plethysm}. The terms ``composition'' and ``plethystic substitution'' are also used for this operation. (Sometimes the term ``plethysm'' is restricted to the case in which $A=\Lambda$, but we will use it for the more general operation.)

As with the scalar product, we extend plethysm in the obvious way to symmetric functions of unbounded degree with coefficients involving other variables. If $f$ is a symmetric function of unbounded degree and if $a$ has a nonzero constant term, then $f[a]$ need not be defined. For example, $H(z)[1]=(1-z)^{-1}$ but $H[1]$ is undefined. In some of our formulas (e.g., Lemma \ref{l-Hps}), we assume implicitly that any infinite sums involved converge as formal power series. 

Note that plethysm does not commute with evaluation of variables: if $\alpha$ is a variable then $p_n[\alpha] = \alpha^n$, but if $y$ is a rational number then $p_n[y]=y$, so 
$p_n[\alpha]\rvert_{\alpha=y}\ne p_n[y]$.

We call $\mathsf{m}\in A$ a \emph{monic term} if it is a monomial with coefficient 1.
The next theorem gives a straightforward method for evaluating the
plethystic substitution $f[a]$ if $a$ is expressed as a sum of monic terms. 
\begin{thm}
\label{t-monic} Suppose that $a\in A$ can be expressed as a \textup{(}possibly
infinite\textup{)} sum of monic terms $\mathsf{m}_{1}+\mathsf{m}_{2}+\cdots$.
Then for any $f=f(x_{1},x_{2},\dots)\in\Lambda$, we have $f[a]=f(\mathsf{m}_{1},\mathsf{m}_{2},\dots)$.
\end{thm}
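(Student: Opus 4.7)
The plan is to exploit the fact that the power sums $p_1, p_2, \dots$ generate $\Lambda$ as a $\mathbb{Q}$-algebra, together with the multiplicativity of plethysm, to reduce the statement to the single case $f = p_n$.

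First I would observe that both sides of the identity, as functions of $f \in \Lambda$, are $\mathbb{Q}$-algebra homomorphisms $\Lambda \to A$. For the left side, this is exactly property (2) in the definition of plethysm. For the right side, $f \mapsto f(\mathsf{m}_1, \mathsf{m}_2, \dots)$ is the variable specialization $x_i \mapsto \mathsf{m}_i$, which is an algebra homomorphism on $\Lambda$ provided the specialization yields convergent sums in $A$---as we are implicitly assuming throughout. Since these two homomorphisms are determined by their values on the generating set $\{p_n\}_{n\geq 1}$, it suffices to verify the identity for $f = p_n$.

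Next I would compute both sides on $f = p_n$. By property (1), $p_n[a] = \psi_n(a)$, where $\psi_n$ is the Adams operation that raises each variable to its $n$th power. Writing $a = \sum_i \mathsf{m}_i$ and using the continuity of $\psi_n$ with respect to the formal power series topology in which the sum converges, I get $\psi_n(a) = \sum_i \psi_n(\mathsf{m}_i)$. For a monic monomial $\mathsf{m}_i$, raising each of its variables to the $n$th power is the same as raising the monomial itself to the $n$th power, so $\psi_n(\mathsf{m}_i) = \mathsf{m}_i^n$. Hence $p_n[a] = \sum_i \mathsf{m}_i^n = p_n(\mathsf{m}_1, \mathsf{m}_2, \dots)$, which establishes the required equality on generators and therefore on all of $\Lambda$.

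The main---though mild---obstacle is the bookkeeping around convergence when infinitely many $\mathsf{m}_i$ appear: one must check that $\sum_i \psi_n(\mathsf{m}_i)$ still converges in $A$, that $\psi_n$ commutes with this infinite sum, and that $f(\mathsf{m}_1, \mathsf{m}_2, \dots)$ defines an element of $A$ for every $f \in \Lambda$. Each of these follows from the paper's standing assumption that all plethystic infinite sums converge as formal power series, together with the observation that $\psi_n$ is defined variable-by-variable and is therefore continuous in the relevant topology. No additional combinatorial input is needed.
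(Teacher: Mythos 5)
Your proof is correct and follows essentially the same route as the paper's: both reduce to the case $f=p_n$ (the paper leaves the justification via the algebra-homomorphism property implicit) and then compute $p_n[a]=\psi_n(\mathsf{m}_1+\mathsf{m}_2+\cdots)=\mathsf{m}_1^n+\mathsf{m}_2^n+\cdots$. Your additional remarks on convergence and continuity of $\psi_n$ are consistent with the paper's standing assumptions and do not change the argument.
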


\begin{proof}
It is sufficient to prove this result for $f=p_{i}$, and this is
straightforward: 
\begin{align*}
p_{i}[a] & =\psi_{i}(\mathsf{m}_{1}+\mathsf{m}_{2}+\cdots)\\
 & =\mathsf{m}_{1}^{i}+\mathsf{m}_{2}^{i}+\cdots\\
 & =p_{i}(\mathsf{m}_{1},\mathsf{m}_{2},\dots).\qedhere
\end{align*}
\end{proof}

For example, 1 is a monic term, so if $k$ is a positive integer and $f\in \Lambda$ then
\begin{equation*}
f[k]=f(\underbrace{1,1,\dots,1}_k).
\end{equation*}

In what follows, we let $X\coloneqq x_{1}+x_{2}+\cdots=p_{1}$, so
that $f[X]=f$. The next several lemmas concern plethystic substitutions
of the form $H[a]$ for certain kinds of elements $a\in A$. 

\begin{lem}
\label{l-Hps} Let $f,g\in A$, let $\mathsf{m}\in A$ be a monic term, and let $k\in\mathbb{Z}$. Then
\begin{enumerate}
\item [\normalfont{(a)}] $H[f+g]=H[f]H[g]$,
\item [\normalfont{(b)}] $H[\mathsf{m}f]=H(\mathsf{m})[f]$, and
\item [\normalfont{(c)}]  $H[kf]=H[f]^{k}$.
\end{enumerate}
\end{lem}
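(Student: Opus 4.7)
The plan is to reduce all three identities to the power-sum expression $H=\exp\bigl(\sum_{k\ge 1}p_k/k\bigr)$ from equation~(\ref{e-hp}), combined with the defining property $p_k[a]=\psi_k(a)$ and the fact that each Adams operation $\psi_k$ is a ring endomorphism of $A$. Once we write $H[a]=\exp\bigl(\sum_k \psi_k(a)/k\bigr)$, each of the three identities becomes a routine manipulation of exponentials of power series in the symbols $\psi_k(f)$, $\psi_k(g)$, and $\mathsf{m}^k$.

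For part~(a), I would expand $H[f+g]=\exp\bigl(\sum_k p_k[f+g]/k\bigr)$ and use the ring-homomorphism property $\psi_k(f+g)=\psi_k(f)+\psi_k(g)$ to split the exponential of a sum into a product, giving $H[f]\,H[g]$. For part~(c), first note that $\psi_k(0)=0$ yields $H[0]=1$; iterating part~(a) then gives $H[kf]=H[f]^k$ for every non-negative integer $k$, and the identity $1=H[0]=H[f+(-f)]=H[f]\,H[-f]$ supplies $H[-f]=H[f]^{-1}$, which extends the result to all integers $k$.

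For part~(b), the key input is $\psi_k(\mathsf{m})=\mathsf{m}^k$, which holds because $\mathsf{m}$ is a monomial with coefficient~$1$, so raising each of its variables to the $k$th power yields $\mathsf{m}^k$. On one hand, $H[\mathsf{m} f]=\exp\bigl(\sum_k \psi_k(\mathsf{m} f)/k\bigr)=\exp\bigl(\sum_k \mathsf{m}^k\psi_k(f)/k\bigr)$. On the other hand, substituting $z=\mathsf{m}$ into (\ref{e-hp}) gives $H(\mathsf{m})=\exp\bigl(\sum_k p_k\mathsf{m}^k/k\bigr)$, and applying the plethysm $[f]$ replaces each $p_k$ by $\psi_k(f)$ while preserving the factors $\mathsf{m}^k$, producing the same expression.

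The point requiring the most care is the convention that $\mathsf{m}$ inside the plethysm argument $\mathsf{m} f$ is raised to the $k$th power by $\psi_k$, while $\mathsf{m}$ appearing as a formal parameter in $H(\mathsf{m})$ sits in the coefficient ring of the plethysm hom and is preserved; once this distinction is pinned down, the two computations in part~(b) visibly agree. The other two parts are essentially immediate once the exponential form of $H$ is in hand.
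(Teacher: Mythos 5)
Your proposal is correct and follows essentially the same route as the paper: all three parts are reduced to the exponential power-sum form $H=\exp\bigl(\sum_{k\ge 1}p_k/k\bigr)$, with (a) and (b) obtained exactly as in the paper's computation and (c) obtained by a harmless variant (iterating (a) and inverting via $H[0]=1$) of the argument the paper omits as ``similar to (a).'' Your closing remark about the distinction between $\mathsf{m}$ inside the plethysm argument, where $\psi_k$ raises it to the $k$th power, and $\mathsf{m}$ as a coefficient in $H(\mathsf{m})$, which the map $g\mapsto g[f]$ preserves, is precisely the point the paper's proof of (b) relies on.
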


\begin{proof}
{\allowdisplaybreaks
First, we have
\begin{align*}
H[f+g] & =\exp\Big(\sum_{n=1}^{\infty}\frac{p_{n}}{n}\Big)[f+g]\\
 & =\exp\Big(\sum_{n=1}^{\infty}\frac{p_{n}}{n}[f+g]\Big)\\
 & =\exp\Big(\sum_{n=1}^{\infty}\frac{1}{n}(p_{n}[f]+p_{n}[g])\Big)\\
 & =\exp\Big(\sum_{n=1}^{\infty}\frac{p_{n}}{n}\Big)[f]\cdot\exp\Big(\sum_{n=1}^{\infty}\frac{p_{n}}{n}\Big)[g]\\
 & =H[f]H[g]
\end{align*}
} which proves (a). For (b), we have
\begin{equation*}
H[\mathsf{m}f] = \exp\Big(\sum_{n=1}^\infty \frac{p_n}{n}\Big)[\mathsf{m}f]
  =\exp\Big(\sum_{n=1}^\infty \mathsf{m}^n \frac{p_n[f]}{n}\Big)
  =\exp\Big(\sum_{n=1}^\infty \mathsf{m}^n \frac{p_n}{n}\Big)[f]
  =H(\mathsf{m})[f].
\end{equation*}
We omit the proof of (c), which is similar to that of (a).
\end{proof}
\begin{lem}
\label{l-HE} Let $\mathsf{m}\in A$ be a monic term, let $\alpha\in A$
be a variable, and let $k\in\mathbb{Z}$. Then
\end{lem}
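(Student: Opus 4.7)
The plan is to reduce each claim in Lemma \ref{l-HE} to a single base case---how $H$ and $E$ act on one monic term---and then bootstrap using the structural identities already established in Lemma \ref{l-Hps}. For the base case, Theorem \ref{t-monic} applied to the one-term expression $\mathsf{m}$ gives $H[\mathsf{m}] = H(\mathsf{m}) = \sum_{n\ge 0}\mathsf{m}^n = (1-\mathsf{m})^{-1}$, since $h_n$ evaluated at a single variable collapses to $\mathsf{m}^n$. The parallel identity $E[\mathsf{m}] = 1+\mathsf{m}$ follows the same way, using that $e_n$ vanishes on a single variable for $n\ge 2$. Because a variable $\alpha$ is itself a monic term, the same conclusions apply verbatim with $\mathsf{m}$ replaced by $\alpha$ or by any product $\mathsf{m}\alpha^i$, and by Lemma \ref{l-Hps}(b) they extend to substitutions of the form $H[\mathsf{m}f] = H(\mathsf{m})[f]$.

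For the integer-multiple portion of the statement, I would invoke Lemma \ref{l-Hps}(c) to obtain $H[k\mathsf{m}] = H[\mathsf{m}]^k = (1-\mathsf{m})^{-k}$ directly from the base case. The corresponding $E$-identity $E[k\mathsf{m}] = (1+\mathsf{m})^k$ can be deduced in either of two essentially equivalent ways: by repeating the proof of Lemma \ref{l-Hps}(c) with \eqref{e-ep} in place of \eqref{e-hp} (the $E$-analogue of that lemma holds by exactly the same computation), or by applying \eqref{e-he} to trade the $E$-plethysm for a reciprocal $H$-plethysm with a sign-adjusted argument. Either route reduces the computation to a single line built on what is already in hand.

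For any piece of the statement that involves rational expressions in $\alpha$ (the shape one expects given the combination of a variable $\alpha$ and an integer $k$, and the way such products underlie MacMahon's formula \eqref{e-qeulerian}), I would expand the rational factor as a geometric series---for example $(1-\alpha)^{-1} = \sum_{i\ge 0}\alpha^i$ or $(1-\alpha^k)/(1-\alpha) = \sum_{i=0}^{k-1}\alpha^i$---and then use Lemma \ref{l-Hps}(a) to convert the $H$-plethysm of the resulting sum of monic terms $\mathsf{m}\alpha^i$ into a (possibly infinite) product $\prod_i H[\mathsf{m}\alpha^i] = \prod_i (1-\mathsf{m}\alpha^i)^{-1}$, with the analogous product $\prod_i(1+\mathsf{m}\alpha^i)$ for $E$. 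The main obstacle here is not algebraic depth but bookkeeping: one must check that the infinite sums and products converge as formal power series in the ambient lambda ring $A$, so that Lemma \ref{l-Hps}(a) can be iterated legitimately across infinitely many summands. This is precisely the implicit convergence hypothesis flagged in the text just before the lemma; once it is verified in the relevant grading (typically by $\alpha$-degree), every case of Lemma \ref{l-HE} falls out by routine manipulation.
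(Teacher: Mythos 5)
Your proposal matches the paper's proof in all essentials: part (c) via Theorem \ref{t-monic} applied to the single monic term $\mathsf{m}$, parts (a) and (b) via Lemma \ref{l-Hps} (b) and (c) together with \eqref{e-he}, and part (d) by splitting the argument into monic terms and reducing to the base case with Lemma \ref{l-Hps} (a) and (c). The only excess is the geometric-series/infinite-product machinery in your final paragraph, which is not needed here because the argument $k(1-\alpha)$ in part (d) is already a finite signed sum of monic terms, so no convergence issues arise.
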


\begin{enumerate}
\item [\normalfont{(a)}] $H[\mathsf{m}X]=H(\mathsf{m})$,
\item [\normalfont{(b)}] $H[-\mathsf{m}X]=E(-\mathsf{m})$,
\item [\normalfont{(c)}] $H[\mathsf{m}]=1/(1-\mathsf{m})$, \textit{and}
\item [\normalfont{(d)}]  $H(\mathsf{m})[k(1-\alpha)]=(1-\mathsf{m}\alpha)^{k}/(1-\mathsf{m})^{k}$.
\end{enumerate}
\begin{proof}
Part (a) is a special case of Lemma \ref{l-Hps} (b), and we have
\begin{align*}
H[-\mathsf{m}X] & =H[\mathsf{m}X]^{-1} && \text{(by Lemma }\ref{l-Hps}\text{ (c))}\\
 & =H(\mathsf{m})^{-1} && \text{(by Lemma }\ref{l-HE}\text{ (a))}\\
 & =E(-\mathsf{m}) && \text{(by (}\ref{e-he}\text{))}
\end{align*}
which proves part (b). Next, by Theorem \ref{t-monic} we have 
\[
H[\mathsf{m}]=\sum_{n=0}^{\infty}h_{n}(\mathsf{m},0,0,\dots)=\sum_{n=0}^{\infty}\mathsf{m}^{n}=\frac{1}{1-\mathsf{m}};
\]
this proves part (c). Finally, to prove part (d), observe that by Lemma \ref{l-Hps} (c) it suffices to prove the case $k=1$. We have 
\begin{align*}
H(\mathsf{m})[1-\alpha]
  &=H[\mathsf{m}(1-\alpha)] &&\text{(by Lemma \ref{l-Hps} (b))}\\
  &=H[\mathsf{m}]H[-\mathsf{m}\alpha] &&\text{(by Lemma \ref{l-Hps} (a))}\\
  &=\frac{H[\mathsf{m}]}{H[\mathsf{m}\alpha]}  &&\text{(by Lemma \ref{l-Hps} (c))}\\
  &=\frac{1-\mathsf{m}\alpha}{1-\mathsf{m}}&&\text {(by Lemma \ref{l-HE} (c))},
\end{align*}
thus completing the proof.
\end{proof}
\begin{lem}
\label{l-plethHsp} Suppose that $g\in A$ does not contain any of the variables $x_{1},x_{2},\dots$. Then for any symmetric function $f$, we have $f[g]=\left\langle f[X],H[gX]\right\rangle $.
\end{lem}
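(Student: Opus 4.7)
The plan is to expand $H[gX]$ in the power sum basis, at which point the identity reduces to the defining property of the scalar product on power sums.

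First I would compute $H[gX]$ explicitly. Starting from \eqref{e-hp}, plethystic substitution commutes with the exponential (since $a\mapsto a[b]$ is a ring homomorphism), so
\begin{equation*}
H[gX]=\exp\Big(\sum_{k=1}^\infty \frac{p_k[gX]}{k}\Big).
\end{equation*}
The Adams operation $\psi_k$ is a ring homomorphism on $A$, so $p_k[gX]=\psi_k(g)\psi_k(X)=p_k[g]\,p_k$, where I use that $\psi_k(X)=x_1^k+x_2^k+\cdots=p_k$. Applying Lemma \ref{l-expsum} with $a_k=p_k[g]\,p_k$ then yields
\begin{equation*}
H[gX]=\sum_{\lambda}\frac{p_\lambda[g]\,p_\lambda}{z_\lambda},
\end{equation*}
where $p_\lambda[g]=\prod_i p_{\lambda_i}[g]$ follows because $f\mapsto f[g]$ is an algebra homomorphism on $\Lambda$.

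Next I would use the hypothesis that $g$ contains none of the $x_i$: this guarantees $p_\lambda[g]$ involves only the auxiliary variables, so it behaves as a scalar with respect to $\langle\,\cdot\,,\,\cdot\,\rangle$. Writing $f=\sum_\lambda a_\lambda p_\lambda$ in the power sum basis, the orthogonality $\langle p_\lambda,p_\mu\rangle=z_\lambda\delta_{\lambda,\mu}$ gives
\begin{equation*}
\left\langle f[X],H[gX]\right\rangle=\sum_{\lambda,\mu}a_\lambda\frac{p_\mu[g]}{z_\mu}\langle p_\lambda,p_\mu\rangle=\sum_\lambda a_\lambda\, p_\lambda[g].
\end{equation*}
Finally, since $f\mapsto f[g]$ is a $\mathbb{Q}$-algebra homomorphism from $\Lambda$ to $A$, the right-hand sum equals $f[g]$, completing the argument.

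The only real subtlety is the bookkeeping around where the auxiliary variables live: one must confirm that $p_k[gX]$ factors as $p_k[g]\,p_k$ (an immediate consequence of $\psi_k$ being a ring endomorphism), and that the scalar product can be evaluated by treating $p_\lambda[g]$ as a coefficient, which is exactly the content of the hypothesis on $g$. Everything else is routine manipulation already developed in Lemmas \ref{l-expsum} and \ref{l-Hps}.
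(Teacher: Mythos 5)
Your proposal is correct and follows essentially the same route as the paper: both expand $H[gX]$ in the power sum basis using $p_k[gX]=p_k[g]\,p_k$, use the hypothesis on $g$ to treat $p_\lambda[g]$ as a scalar in the pairing, and finish with the orthogonality $\langle p_\lambda,p_\mu\rangle=z_\lambda\delta_{\lambda,\mu}$. The only cosmetic difference is that the paper verifies the identity for $f=p_\lambda$ and extends by linearity, whereas you write $f=\sum_\lambda a_\lambda p_\lambda$ from the start.
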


\begin{proof}
We prove this result for $f=p_{\lambda}$, and then the result follows
by linearity. First, it is easy to show that for any  partition
$\tau$, we have $p_{\tau}[gX]=p_{\tau}[g]p_{\tau}[X]$. Thus

\begin{align*}
H[gX] & =\sum_{\tau}\frac{1}{z_{\tau}}p_{\tau}[gX]=\sum_{\tau}\frac{1}{z_{\tau}}p_{\tau}[g]p_{\tau}[X]
\end{align*}
and so
\begin{align*}
\left\langle p_{\lambda}[X],H[gX]\right\rangle  & =\biggl\langle  p_{\lambda},\sum_{\tau}\frac{1}{z_{\tau}}p_{\tau}[g]p_{\tau}\biggr\rangle .
\end{align*}
Since $g$ does not contain any of the variables $x_{1},x_{2},\dots$,
the same is true for $p_{\tau}[g]$ for any $\tau$. Thus we can pull
out $p_{\tau}[g]$ from the scalar product expression to obtain 
\[
\left\langle p_{\lambda}[X],H[gX]\right\rangle =\sum_{\tau}\left\langle p_{\lambda},\frac{p_{\tau}}{z_{\tau}}\right\rangle p_{\tau}[g]=p_{\lambda}[g]
\]
and we are done.
\end{proof}

\subsection{\label{ss-qsymgf}Descent compositions, cycle type, and quasisymmetric
generating functions}

We use the notation $L\vDash n$ to indicate that $L$ is a composition
of $n$. Every permutation can be uniquely decomposed into a sequence
of maximal increasing consecutive subsequences\textemdash or equivalently,
maximal consecutive subsequences containing no descents\textemdash which
we call \textit{increasing runs}. The \textit{descent composition}
of $\pi$, denoted $\Comp(\pi)$, is the composition whose parts are
the lengths of the increasing runs of $\pi$ in the order that they
appear. For example, the increasing runs of $\pi=85712643$ are $8$,
$57$, $126$, $4$, and $3$, so the descent composition of $\pi$
is $\Comp(\pi)=(1,2,3,1,1)$. 

For a composition $L=(L_{1},L_{2},\dots,L_{k})$, let $\Des(L)\coloneqq\{L_{1},L_{1}+L_{2},\dots,L_{1}+\cdots+L_{k-1}\}$.
It is easy to see that if $L$ is the descent composition of $\pi$,
then $\Des(L)$ is the set of descents (i.e., \textit{descent set})
of $\pi$. Recall that the fundamental quasisymmetric function $F_{L}$
is defined by
\[
F_{L}\coloneqq\sum_{\substack{i_{1}\leq i_{2}\leq\cdots\leq i_{n}\\
i_{j}<i_{j+1}\,\mathrm{if}\,j\in\Des(L)
}
}x_{i_{1}}x_{i_{2}}\cdots x_{i_{n}}.
\]
Given a set $\Pi$ of permutations, its \textit{quasisymmetric generating
function} $Q(\Pi)$ is defined by 
\[
Q(\Pi)\coloneqq\sum_{\pi\in\Pi}F_{\Comp(\pi)}.
\]

Moreover, given a composition $L$, let $r_{L}$ denote the skew Schur
function of ribbon shape $L$.\footnote{Ribbon shapes are also  called ``skew-hooks'' (e.g., by Gessel\textendash Reutenauer
\cite{Gessel1993}) or ``border strips'' (e.g., by Macdonald \cite{Macdonald1995} and Stanley \cite{Stanley2007,Stanley2001}).} 
Thus $r_L$ is defined by 
\[
{r}_{L}=\sum_{i_1,\dots, i_n}x_{i_{1}}x_{i_{2}}\cdots x_{i_{n}}
\]
where the sum is over all $i_{1},\dots,i_{n}$  satisfying
\begin{equation*}
\underset{L_{1}}{\underbrace{i_{1}\leq\cdots\leq i_{L_{1}}}}>\underset{L_{2}}{\underbrace{i_{L_{1}+1}\leq\cdots\leq i_{L_{1}+L_{2}}}}>\cdots>\underset{L_{k}}{\underbrace{i_{L_{1}+\cdots+L_{k-1}+1}\leq\cdots\leq i_{n}}}.
\end{equation*}

The following is \cite[Corollary 4]{Gessel1984}.
\begin{thm}
\label{t-QrL} Suppose that $Q(\Pi)$ is a symmetric function. Then the number of permutations in $\Pi$ with descent composition $L$ is equal to
$\left\langle Q(\Pi),r_{L}\right\rangle $.
\end{thm}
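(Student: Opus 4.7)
My plan is to reduce the theorem to the standard duality between fundamental quasisymmetric functions and ribbon Schur functions, which rests on how both expand in terms of standard Young tableaux. First I would rewrite the left-hand side of the definition of $Q(\Pi)$ by collecting permutations with the same descent composition, obtaining
\begin{equation*}
Q(\Pi)=\sum_{L\vDash n} a_{L}\, F_{L},
\end{equation*}
where $a_{L}$ is the number of permutations in $\Pi$ with $\Comp(\pi)=L$. So the goal becomes showing that $\langle Q(\Pi),r_{L}\rangle=a_{L}$, and since $Q(\Pi)$ is assumed symmetric, I can work entirely within $\Lambda$ and use the Hall scalar product in the usual way.

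The next step is to recall two expansions, both of which are enumerated by the same numbers $n^{L}_{\mu}$, defined as the number of standard Young tableaux of shape $\mu$ whose descent set equals $\Des(L)$. The first is Gessel's expansion of Schur functions in fundamentals,
\begin{equation*}
s_{\mu}=\sum_{L\vDash|\mu|} n^{L}_{\mu}\, F_{L},
\end{equation*}
and the second is the ribbon Schur expansion
\begin{equation*}
r_{L}=\sum_{\mu\vdash|L|} n^{L}_{\mu}\, s_{\mu},
\end{equation*}
which is immediate from the Murnaghan--Nakayama/ribbon rule, or alternatively from the fact that $r_{L}$ is the sum of $F_{\Comp(\pi)}$ over permutations whose standardization under RSK has descent set $\Des(L)$, grouped by shape.

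With these in hand, I write $Q(\Pi)=\sum_{\mu} b_{\mu}\, s_{\mu}$ for the Schur expansion of the symmetric function $Q(\Pi)$. Substituting Gessel's formula,
\begin{equation*}
Q(\Pi)=\sum_{\mu}b_{\mu}\sum_{L} n^{L}_{\mu}\, F_{L}=\sum_{L}\Bigl(\sum_{\mu}b_{\mu}\, n^{L}_{\mu}\Bigr)F_{L},
\end{equation*}
so by uniqueness of the expansion in the $F_{L}$ basis of $\mathrm{QSym}$, $a_{L}=\sum_{\mu}b_{\mu}\, n^{L}_{\mu}$. On the other hand, by the orthonormality $\langle s_{\mu},s_{\nu}\rangle=\delta_{\mu\nu}$ of Schur functions,
\begin{equation*}
\langle Q(\Pi),r_{L}\rangle=\Bigl\langle\sum_{\mu} b_{\mu}\, s_{\mu},\; \sum_{\nu} n^{L}_{\nu}\, s_{\nu}\Bigr\rangle=\sum_{\mu} b_{\mu}\, n^{L}_{\mu}=a_{L},
\end{equation*}
which is the desired identity.

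The only genuine subtlety is justifying the two expansions of $s_{\mu}$ and $r_{L}$ in terms of the tableau counts $n^{L}_{\mu}$; both are classical (and are stated in Stanley's EC2, for instance), so in the actual writeup I would simply cite them. Everything else is bookkeeping in the Schur and fundamental bases.
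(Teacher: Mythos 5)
Your argument is correct. Note, however, that the paper does not prove this statement at all: it is quoted verbatim as Corollary 4 of Gessel's 1984 paper on quasisymmetric functions, so there is no in-paper proof to match. Your route---expanding $Q(\Pi)$ in the Schur basis, invoking $s_{\mu}=\sum_{L}n^{L}_{\mu}F_{L}$ and $r_{L}=\sum_{\mu}n^{L}_{\mu}s_{\mu}$ with $n^{L}_{\mu}$ the number of standard Young tableaux of shape $\mu$ and descent set $\Des(L)$, and then using orthonormality of the $s_{\mu}$---is a valid self-contained derivation, and the two expansions do carry the same coefficients, which is the crux. The classical proof (Gessel's original one) is more direct and avoids tableaux entirely: one expands $r_{L}=\sum_{M}(-1)^{\ell(L)-\ell(M)}h_{M}$ over coarsenings $M$ of $L$, pairs against the monomial quasisymmetric expansion of $Q(\Pi)$ using $\left\langle m_{\lambda},h_{\tau}\right\rangle =\delta_{\lambda\tau}$, and finishes with inclusion--exclusion on descent sets; this exhibits the $r_{L}$ as dual to the $F_{L}$ without ever passing through the Schur basis, at the cost of a small combinatorial computation that your approach outsources to two standard theorems. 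One small caution on your citations: the expansion $r_{L}=\sum_{\mu}n^{L}_{\mu}s_{\mu}$ is not really a consequence of the Murnaghan--Nakayama rule (which concerns products with power sums); your alternative justification via RSK---grouping the permutations $\pi$ with $\Comp(\pi^{-1})=L$ by the common shape of their insertion and recording tableaux---is the right one, or you can cite the Jacobi--Trudi $h$-expansion of $r_{L}$ together with the Kostka-number interpretation of descent sets.
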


Recall that a permutation $\pi$ has \textit{cycle type} $\lambda=(1^{m_{1}}2^{m_{2}}\cdots)$
if $\pi$ has exactly $m_{1}$ cycles of length 1, $m_{2}$ cycles of
length 2, and so on. Henceforth, cycles of length $i$ are called $i$-\textit{cycles},
1-cycles in particular are called \textit{fixed points}, and the number
of fixed points of a permutation $\pi$ is denoted $\fix(\pi)$. For
$n\in\mathbb{P}$, define the symmetric function $L_{n}$ by 
\[
L_{n}\coloneqq\frac{1}{n}\sum_{d\mid n}\mu(d)p_{d}^{n/d}
\]
where $\mu$ is the number-theoretic M{\"o}bius function. Then, given
a partition $\lambda=(1^{m_{1}}2^{m_{2}}\cdots)$, define $L_{\lambda}$
by 
\[
L_{\lambda}\coloneqq h_{m_{1}}[L_{1}]h_{m_{2}}[L_{2}]\cdots.
\]
The symmetric functions $L_{\lambda}$ are called \textit{Lyndon symmetric
functions}. 
Gessel and Reutenauer \cite[Theorem 2.1]{Gessel1993} showed
that $L_{\lambda}$ is the quasisymmetric generating function for
the set of permutations with cycle type $\lambda$.
\begin{cor}
The number of permutations $\pi$ with cycle type $\lambda$ and descent
composition $M$ is equal to $\left\langle L_{\lambda},r_{M}\right\rangle $.
\end{cor}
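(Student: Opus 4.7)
The plan is to observe that this corollary follows essentially immediately by combining Theorem \ref{t-QrL} with the Gessel--Reutenauer result stated just above, so the ``proof'' is really just an assembly.

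First, I would let $\Pi_\lambda \subseteq \mathfrak{S}_n$ denote the set of all permutations of cycle type $\lambda$ (where $n = |\lambda|$). By the Gessel--Reutenauer theorem \cite[Theorem 2.1]{Gessel1993} cited in the excerpt, the quasisymmetric generating function $Q(\Pi_\lambda)$ equals the Lyndon symmetric function $L_\lambda$; in particular, $Q(\Pi_\lambda)$ is a symmetric function, so Theorem \ref{t-QrL} applies to $\Pi_\lambda$.

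Next, I would apply Theorem \ref{t-QrL} directly with $\Pi = \Pi_\lambda$ and $L = M$. This theorem states that the number of permutations in $\Pi_\lambda$ with descent composition $M$ equals $\langle Q(\Pi_\lambda), r_M\rangle$. Substituting $Q(\Pi_\lambda) = L_\lambda$ gives precisely the claimed identity
\[
\#\{\pi \in \mathfrak{S}_n : \pi \text{ has cycle type } \lambda \text{ and } \Comp(\pi) = M\} = \langle L_\lambda, r_M\rangle.
\]

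There is no real obstacle here; the only thing to check is that ``permutations with cycle type $\lambda$ and descent composition $M$'' is the same as ``permutations in $\Pi_\lambda$ with descent composition $M$,'' which is immediate from the definition of $\Pi_\lambda$. All the substantive content is packaged in the two cited results, so this corollary is a one-line specialization.
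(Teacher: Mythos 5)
Your proposal is correct and matches the paper's (implicit) reasoning exactly: the corollary is stated immediately after the Gessel--Reutenauer identification $Q(\Pi_\lambda)=L_\lambda$ precisely so that it follows by substituting into Theorem \ref{t-QrL}. Nothing further is needed.
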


\subsection{Ribbon expansions}

In the work that will follow, we will need to expand the symmetric
function expressions $(1-tE(yx)H(x))^{-1}$, $H(x)/(1-tE(yx)H(x))$,
and $(1+tH(x))/(1-t^{2}E(x)H(x))$ in terms of the ribbon Schur functions
$r_{L}$ and in terms of the power sums $p_{\lambda}$. First we give
the ribbon expansions, which reveal connections between these expressions
and permutation statistics. 

We let $\des(L)$, $\pk(L)$, $\lpk(L)$, and $\udr(L)$ be equal
to the values $\des(\pi)$, $\pk(\pi)$, $\lpk(\pi)$, and $\udr(\pi)$,
respectively, of any permutation $\pi$ with descent composition $L$.
These are well-defined because the statistics $\des$, $\pk$, $\lpk$,
and $\udr$ depend only on the descent composition; in other words,
$\Comp(\pi)=\Comp(\sigma)$ implies $\st(\pi)=\st(\sigma)$ for $\st=\des,\pk,\lpk,\udr$. Recall that permutation statistics with this property are called \textit{descent statistics}. In general, if $\st$ is a descent statistic, then we let $\st(L)$ denote the value of $\st$ on any permutation $\pi$ with descent composition $L$.

\begin{lem}
\label{l-ribexp} We have the formulas
\leqnomode
\begin{multline}
\tag{a}\frac{1}{1-tE(yx)H(x)}=\frac{1}{1-t}\\
+\frac{1}{1+y}\sum_{n=1}^{\infty}\sum_{L\vDash n}\left(\frac{1+yt}{1-t}\right)^{n+1}\left(\frac{(1+y)^{2}t}{(y+t)(1+yt)}\right)^{\pk(L)+1}\left(\frac{y+t}{1+yt}\right)^{\des(L)+1}x^{n}r_{L},
\end{multline}
\vspace{-9pt}
\begin{multline}
\tag{b}
\quad
\frac{H(x)}{1-tE(yx)H(x)}=\frac{1}{1-t}\\
+\sum_{n=1}^{\infty}\sum_{L\vDash n}\frac{(1+yt)^{n}}{(1-t)^{n+1}}\left(\frac{(1+y)^{2}t}{(y+t)(1+yt)}\right)^{\lpk(L)}\left(\frac{y+t}{1+yt}\right)^{\des(L)}x^{n}r_{L},\qquad
\end{multline}
\vspace{-9pt}and
\begin{equation}
\tag{c}
\frac{1+tH(x)}{1-t^{2}E(x)H(x)}=\frac{1}{1-t}+\frac{1}{2(1-t)^{2}}\sum_{n=1}^{\infty}\sum_{L\vDash n}\frac{(1+t^{2})^{n}}{(1-t^{2})^{n-1}}\left(\frac{2t}{1+t^{2}}\right)^{\udr(L)}x^{n}r_{L}.
\end{equation}
\end{lem}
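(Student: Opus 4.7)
My plan is to prove each identity by interpreting the left-hand side as a generating function for two-colored words, expanding in the fundamental quasisymmetric basis $\{F_L\}$, and then converting this expansion to the claimed form in the $r_L$-basis via the symmetry of the left-hand side.

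For part (a), I would first expand
\[
\frac{1}{1-tE(yx)H(x)}=\sum_{k\geq 0}t^{k}\bigl(E(yx)H(x)\bigr)^{k},
\]
and interpret each factor via the standard $P$-partition reading: $H(x)^{k}$ enumerates weakly increasing words in $[k]$, while $E(yx)^{k}$ enumerates strictly decreasing words in $[k]$ with each letter weighted by $y$. By the shuffle product for $F_{L}F_{M}$, the coefficient of $x^{n}$ in $(E(yx)H(x))^{k}$ takes the form $\sum_{L\vDash n}N_{k}(L,y)F_{L}$, where $N_{k}(L,y)$ is a polynomial in $k$ whose structure depends on $\pk(L)$ and $\des(L)$. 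Because the left-hand side is symmetric, I can then pass from the $F_{L}$-expansion to an $r_{L}$-expansion with the same coefficients, invoking Theorem~\ref{t-QrL} (which gives $\langle Q,r_{L}\rangle=[F_{L}]Q$ for symmetric $Q$). Summing over $k$ using $\sum_{k\geq 0}\binom{k+m}{m}t^{k}=(1-t)^{-(m+1)}$ produces a rational function, which I would simplify via a change of variable such as $u=(y+t)/(1+yt)$ to match the claimed form.

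Part (b) proceeds identically, with the extra factor $H(x)$ in the numerator contributing an initial weakly increasing run that shifts the peak statistic to the left-peak statistic. Part (c) uses the decomposition
\[
\frac{1+tH(x)}{1-t^{2}E(x)H(x)}=\sum_{k\geq 0}t^{2k}\bigl(E(x)H(x)\bigr)^{k}+tH(x)\sum_{k\geq 0}t^{2k}\bigl(E(x)H(x)\bigr)^{k},
\]
where each $E(x)H(x)$ block encodes a pair of adjacent down-up runs, so the two sums enumerate words with an even and odd number of up-down runs, respectively. The main obstacle throughout will be the algebraic simplification from the natural geometric-series form into the specific rational form stated in the lemma, especially the matching of the exponents $\pk(L)+1$, $\lpk(L)$, $\udr(L)$. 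A robust fallback is to verify the identity at the level of $p$-basis expansions, using $r_{L}=\sum_{\lambda}\chi^{L}_{\lambda}p_{\lambda}/z_{\lambda}$ together with the Gessel--Reutenauer interpretation of $\chi^{L}_{\lambda}$ as counting permutations of cycle type $\lambda$ with descent composition $L$, thereby reducing both sides to the same generating function over permutations.
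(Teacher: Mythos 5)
Your proposal breaks down at the step where you pass from the $F_{L}$-expansion of the left-hand side to an $r_{L}$-expansion ``with the same coefficients.'' The ribbons $\{r_{L}\}_{L\vDash n}$ are linearly \emph{dependent} in $\Lambda$ (there are $2^{n-1}$ of them but $\dim\Lambda_{n}=p(n)$), so a symmetric function has no unique $r_{L}$-expansion, and the $F$-coefficients of a symmetric function are in general \emph{not} valid $r$-coefficients. Theorem~\ref{t-QrL} gives $[F_{L}]Q=\left\langle Q,r_{L}\right\rangle$, which is a different quantity from the coefficient of $r_{L}$ in the asserted expansion. Concretely, set $y=0$ in part (a) and look at degree $3$: the left side contributes $\frac{1}{(1-t)^{4}}\left(th_{3}+2t^{2}s_{21}+t^{3}e_{3}\right)$, whose $F$-expansion has $[F_{(2,1)}]=2t^{2}/(1-t)^{4}$ because $s_{21}=F_{(2,1)}+F_{(1,2)}$, whereas the lemma asserts the coefficient of $r_{(2,1)}$ is $t^{2}/(1-t)^{4}$; the factor of $2$ is absorbed by the coincidence $r_{(2,1)}=r_{(1,2)}=s_{21}$. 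So transporting $F$-coefficients to $r$-coefficients produces the wrong answer, and your argument as written does not establish the lemma.

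The standard repair---and what the paper actually does---is to prove these identities in the algebra of noncommutative symmetric functions, where the ribbons $R_{L}$ \emph{do} form a basis and the coefficients are therefore forced: one expands $(\mathbf{E}(yx)\mathbf{H}(x))^{k}$ using the multiplication rule $R_{L}R_{M}=R_{LM}+R_{L\odot M}$, extracts the coefficient of $R_{L}$ as a polynomial in $k$ depending on $\pk(L)$ and $\des(L)$, sums the resulting series in $k$ against $t^{k}$, and only then applies the projection $R_{L}\mapsto r_{L}$ to obtain parts (a)--(c); the paper simply cites the noncommutative versions from \cite{Zhuang2017}. Your word-counting setup is essentially the right combinatorics for that noncommutative computation, but it must be carried out before projecting to $\Lambda$. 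Your fallback is also not sound as stated: $\chi_{\lambda}^{L}=\left\langle r_{L},p_{\lambda}\right\rangle$ is the signed ribbon character value given by Murnaghan--Nakayama, not the Gessel--Reutenauer count of permutations of cycle type $\lambda$ with descent composition $L$ (that count is $\left\langle L_{\lambda},r_{L}\right\rangle$); for instance $\left\langle r_{(1,1)},p_{2}\right\rangle=-1$ while there is exactly one permutation of cycle type $(2)$ with descent composition $(1,1)$.
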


\begin{proof}
These formulas are obtained directly from Lemma 4.1, Lemma 4.6, and
Corollary 4.12 of \cite{Zhuang2017}\textemdash which are noncommutative
versions of these formulas\textemdash via the canonical projection
map from noncommutative symmetric functions
to symmetric functions. 
\end{proof}

We note that part (a) of Lemma \ref{l-ribexp} can be rewritten as 
\begin{align*}
\frac{1}{1-vE(ux)H(x)} & =\frac{1}{1-v}+\frac{1}{1+u}\sum_{n=1}^{\infty}\sum_{L\vDash n}\left(\frac{1+uv}{1-v}\right)^{n+1}y^{\pk(L)+1}t^{\des(L)+1}x^{n}r_{L}
\end{align*}
where 
\[
u=\frac{1+t^{2}-2yt-(1-t)\sqrt{(1+t)^{2}-4yt}}{2(1-y)t}
\]
and 
\[
v=\frac{(1+t)^{2}-2yt-(1+t)\sqrt{(1+t)^{2}-4yt}}{2yt}.
\]
To see this, we replace $y$ with $u$ and replace $t$ with $v$ in
part (a). Then we set $y=\frac{(1+u)^{2}v}{(u+v)(1+uv)}$ and $t=\frac{u+v}{1+uv}$;
solving these two equations yields the above expressions for $u$
and $v$. By the same reasoning, part (b) of Lemma \ref{l-ribexp} is equivalent to 
\begin{align*}
\frac{H(x)}{1-vE(ux)H(x)} & =\frac{1}{1-v}+\sum_{n=1}^{\infty}\sum_{L\vDash n}\frac{(1+uv)^{n}}{(1-v)^{n+1}}y^{\lpk(L)}t^{\des(L)}x^{n}r_{L}
\end{align*}
where $u$ and $v$ are the same as above, and part (c) can be ``inverted'' in a similar way.

There are explicit formulas for expressing powers of $u$ and $v$ in terms of $y$ and $t$.
It is easy to check that $v=yt P^2$ and $u = (1-y)t Q^2$, where
\begin{equation*}
P =\frac{1+t-\sqrt{(1+t)^{2}-4yt}}{2yt}=\frac{2}{1+t+\sqrt{(1+t)^2 -4yt}}
\end{equation*}
and
\begin{equation*}
Q = -\frac{1-t-\sqrt{(1+t)^{2}-4yt}}{2(1-y)t} =\frac{2}{1-t+\sqrt{(1+t)^2 -4yt}}.
\end{equation*}
Using the fact that $P=1-tP+ytP^2$ and $Q=P/(1-tP)$, we can apply Lagrange inversion  
(see, e.g., \cite[Equation (2.4.5)]{Gessel2016a}) to find an explicit formula for the expansion of $P^aQ^b$ in powers of $y$ and $t$,
\begin{equation*}
P^a Q^b = \sum_{i,j=0}^\infty(-1)^{j-i}\left[\binom{a+b+i+j-1}{i}\binom{a+j-1}{j-i} - \binom{a+b+i+j-1}{i-1}\binom{a+j}{j-i}\right]y^i t^j,
\end{equation*}
which yields explicit formulas for expanding powers of $u$ and $v$.

Furthermore, it is interesting to note that part (c) of Lemma \ref{l-ribexp} can be derived from parts (a) and (b).  
First, we can split the left side of (c) into even and odd powers of $t$.
Using the fact that the valley number\footnote{Given $\pi\in\mathfrak{S}_{n}$, we say that $i\in\{2,\dots,n\}$
is a \textit{valley} of $\pi$ if $\pi(i-1)>\pi(i)<\pi(i+1)$. Then $\val(\pi)$ is the number of valleys of $\pi$.}, $\val$, has the same distribution as $\pk$, 
we may apply the case $y=1$ of (a) and (b) to express these two sums in terms of $\val$ and $\lpk$. Then we may use the identities
 $\val(L) +1 = \left\lceil{\udr(L)/2}\right\rceil$ and $\lpk(L) =\left\lfloor\udr(L)/2\right\rfloor$ (see \cite[Lemma 2.2]{Gessel2018}) to express these two sums in terms of $\udr$. Finally we split these two sums into even and odd values of $\udr(L)$ and rearrange to obtain the right side of (c).

Before continuing, we present here symmetric function formulas analogous to those in Lemma \ref{l-ribexp} for two more descent statistics: the \emph{double descent number} $\ddes$ and the \emph{number of biruns} $\br$. We will not discuss them further, but they could be used to derive formulas analogous to those we will give later on for $\des$, $\pk$, $\lpk$, and $\udr$.

A \emph{double descent} of a permutation $\pi\in \mathfrak{S}_n$ is an index $i$ with $2\le i \le n-1$ such that $\pi(i-1)>\pi(i)>\pi(i+1)$. 
We denote by $\ddes(\pi)$ the number of double descents of $\pi$. (See, for example, \cite[Theorem 12]{Zhuang2016} for the generating function for permutations by double descents.) Using the fact that each descent of a permutation $\pi$ either occurs in position 1 or is preceded by a descent or an ascent, we see that $\ddes(\pi) + \lpk(\pi) = \des(\pi)$, so $\ddes(\pi) = \des(\pi) - \lpk(\pi)$.  
Thus, we might expect that a symmetric function formula for double descents can be derived from Lemma \ref{l-ribexp} (b). To obtain the simplest formula for double descents, we first replace $t$ with $t^2$ in Lemma \ref{l-ribexp} (b), then set $y=1/t$, and then replace $x$ with $tx$. This gives 
\begin{equation}
\label{e-dd}
\frac{H(tx)}{1-t^2 E(x) H(tx)} = \frac{1}{1-t^2} +\frac{1}{1-t^2}
  \sum_{n=1}^\infty \sum_{L\vDash n} \left(\frac{t}{1-t}\right)^{n}
  (t-1+t^{-1})^{\ddes(L)}x^n r_L.
\end{equation}

Recall that a birun of a permutation is a maximal monotone consecutive subsequence. We denote by $\br(\pi)$ the number of biruns of $\pi$. Since biruns are closely related to up-down runs, we might expect a formula similar to Lemma \ref{l-ribexp} (c) for biruns, and in fact there is such a formula:
\begin{multline}
\label{e-biruns}
\qquad
\frac{2+tH(x)+tE(x)}{1-t^2 E(x)H(x)}=\frac{2}{1-t}+\frac{2t}{(1-t)^2}x h_1\\
  + \frac{(1+t)^3}{2(1-t)}
\sum_{n=2}^{\infty}\sum_{L\vDash n}\frac{(1+t^{2})^{n-1}}{(1-t^{2})^n}\left(\frac{2t}{1+t^{2}}\right)^{\br(L)}x^{n}r_{L}.
\qquad
\end{multline}
Formula \eqref{e-biruns} can be proved using the approach of \cite{Zhuang2016}.

\subsection{Power sum expansions and Eulerian polynomials}
Next, we give the power sum expansions of $(1-tE(yx)H(x))^{-1}$, $H(x)/(1-tE(yx)H(x))$,
and $(1+tH(x))/(1-t^{2}E(x)H(x))$, which\textemdash perhaps
surprisingly\textemdash involve Eulerian polynomials and type B Eulerian
polynomials. The $n$th \textit{type B Eulerian polynomial} $B_{n}(t)$
may be defined by the formula
\[
\frac{B_{n}(t)}{(1-t)^{n+1}}=\sum_{k=0}^{\infty}(2k+1)^{n}t^{k}
\]
and gives the distribution of the type B descent number over the $n$th
hyperoctahedral group; see \cite[Section 2.3]{Zhuang2017} for details.
Recall that $l(\lambda)$ is the number of parts of the partition
$\lambda$. We define $o(\lambda)$ to be the number of odd
parts of $\lambda$ and we write $\sum_{\lambda\;\mathrm{odd}}$ to denote a sum over partitions $\lambda$ in which every part is odd.
\begin{lem}
\label{l-psexp} We have the formulas
\leqnomode
\[\tag{a}
\frac{1}{1-tE(yx)H(x)}=\sum_{\lambda}\frac{p_{\lambda}}{z_{\lambda}}\frac{A_{l(\lambda)}(t)}{(1-t)^{l(\lambda)+1}}x^{\left|\lambda\right|}\prod_{k=1}^{l(\lambda)}(1-(-y)^{\lambda_{k}}),
\]
\[\tag{b}
\frac{H(x)}{1-tE(x)H(x)}=\sum_{\lambda}\frac{p_{\lambda}}{z_{\lambda}}\frac{B_{o(\lambda)}(t)}{(1-t)^{o(\lambda)+1}}x^{\left|\lambda\right|},
\]
and
\begin{align*}
\tag{c}
\frac{1+tH(x)}{1-t^{2}E(x)H(x)} & =\sum_{\lambda\;\mathrm{odd}}\frac{p_{\lambda}}{z_{\lambda}}2^{l(\lambda)}\frac{A_{l(\lambda)}(t^{2})}{(1-t^{2})^{l(\lambda)+1}}x^{\left|\lambda\right|}+t\sum_{\lambda}\frac{p_{\lambda}}{z_{\lambda}}\frac{B_{o(\lambda)}(t^{2})}{(1-t^{2})^{o(\lambda)+1}}x^{\left|\lambda\right|}.
\end{align*}
\end{lem}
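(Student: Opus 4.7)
The plan is to expand each left-hand side as a geometric series in $t$, then use the power sum formulas \eqref{e-hp} and \eqref{e-ep} together with Lemma \ref{l-expsum} to rewrite each term of the series as a sum over partitions $\lambda$, and finally to recognize the remaining $t$-sums as the Eulerian polynomial identity \eqref{e-eulerian} or the analogous type B identity. The main bookkeeping task is to track when $E$ and $H$ combine to produce only odd power sums (which forces $o(\lambda)$ and the type B polynomial to appear) versus producing all power sums (which yields $l(\lambda)$ and the ordinary Eulerian polynomial).

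For part (a), I would start from
\[
E(yx)H(x)=\exp\Big(\sum_{j=1}^{\infty}\frac{p_{j}}{j}\bigl(1-(-y)^{j}\bigr)x^{j}\Big),
\]
so $\bigl(E(yx)H(x)\bigr)^{k}=\exp\bigl(\sum_{j\ge1}\frac{p_{j}}{j}k(1-(-y)^{j})x^{j}\bigr)$. Applying Lemma \ref{l-expsum} with $a_{j}=k\bigl(1-(-y)^{j}\bigr)p_{j}$ extracts a factor of $k^{l(\lambda)}\prod_{i}(1-(-y)^{\lambda_{i}})$. Summing $t^{k}$ over $k\ge0$ introduces $\sum_{k\ge0}k^{l(\lambda)}t^{k}=A_{l(\lambda)}(t)/(1-t)^{l(\lambda)+1}$ by \eqref{e-eulerian}, which yields (a).

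For part (b), the key observation is that $E(x)H(x)=\exp\bigl(\sum_{j\text{ odd}}\frac{2p_{j}}{j}x^{j}\bigr)$ because the even-$j$ terms cancel. Therefore
\[
H(x)\bigl(E(x)H(x)\bigr)^{k}=\exp\Big(\sum_{j\text{ odd}}\frac{(2k+1)p_{j}}{j}x^{j}+\sum_{j\text{ even}}\frac{p_{j}}{j}x^{j}\Big),
\]
and Lemma \ref{l-expsum} gives a factor of $(2k+1)^{o(\lambda)}$. Summing $t^{k}$ and invoking the defining identity $\sum_{k\ge0}(2k+1)^{n}t^{k}=B_{n}(t)/(1-t)^{n+1}$ produces (b). For part (c), I would split $(1+tH(x))/(1-t^{2}E(x)H(x))$ as the sum of $1/(1-t^{2}E(x)H(x))$ and $t\cdot H(x)/(1-t^{2}E(x)H(x))$. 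The second piece is just $t$ times part (b) with $t$ replaced by $t^{2}$. For the first piece, the same cancellation gives $\bigl(E(x)H(x)\bigr)^{k}=\exp\bigl(\sum_{j\text{ odd}}\frac{2kp_{j}}{j}x^{j}\bigr)$, which by Lemma \ref{l-expsum} is a sum over partitions with only odd parts, contributing $(2k)^{l(\lambda)}=2^{l(\lambda)}k^{l(\lambda)}$; summing $t^{2k}$ over $k$ and applying \eqref{e-eulerian} with $t$ replaced by $t^{2}$ gives the first summand on the right of (c).

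The hardest step conceptually is recognizing in (b) and (c) that the combination $E(x)H(x)$ cuts down to odd power sums, which is what forces the transition from ordinary Eulerian polynomials (and $l(\lambda)$) to type B Eulerian polynomials (and $o(\lambda)$), but mechanically everything reduces to substituting into \eqref{e-hp}, \eqref{e-ep}, Lemma \ref{l-expsum}, and the Eulerian generating function identities; no convergence issues arise because each coefficient of $x^{n}$ is a polynomial in $t,y$ divided by a power of $1-t$.
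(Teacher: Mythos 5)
Your proposal is correct and follows essentially the same route as the paper: expand each left-hand side as a geometric series in $t$, combine $E$ and $H$ into a single exponential of power sums, apply Lemma \ref{l-expsum}, and sum the resulting $k^{l(\lambda)}t^k$ or $(2k+1)^{o(\lambda)}t^k$ series via \eqref{e-eulerian} and the defining identity for $B_n(t)$. The only cosmetic difference is that the paper proves (b) by first treating $H(x)/(1-tE(yx)H(x))$ for general $y$ and then setting $y=1$, whereas you specialize to $y=1$ from the outset using the cancellation $E(x)H(x)=\exp\bigl(\sum_{j\text{ odd}}2p_jx^j/j\bigr)$; the substance is identical.
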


\begin{proof}
First, we have
\begin{align*}
\frac{1}{1-tE(yx)H(x)} & =\sum_{n=0}^{\infty}t^{n}\exp\Big(n\sum_{k=1}^{\infty}\frac{p_{k}}{k}(-1)^{k-1}y^{k}x^{k}\Big)\exp\Big(n\sum_{k=1}^{\infty}\frac{p_{k}}{k}x^{k}\Big)\\
 & =\sum_{n=0}^{\infty}t^{n}\exp\Big(n\sum_{k=1}^{\infty}\frac{p_{k}}{k}(1-(-y)^{k})x^{k}\Big).
\end{align*}
By Lemma \ref{l-expsum}, we have
\[
\exp\Big(n\sum_{k=1}^{\infty}\frac{p_{k}}{k}(1-(-y)^{k})x^{k}\Big)=\sum_{\lambda}\frac{p_{\lambda}}{z_{\lambda}}n^{l(\lambda)}x^{\left|\lambda\right|}\prod_{k=1}^{l(\lambda)}(1-(-y)^{\lambda_{k}}).
\]
It follows that
\begin{align*}
\frac{1}{1-tE(yx)H(x)} & =\sum_{n=0}^{\infty}t^{n}\sum_{\lambda}\frac{p_{\lambda}}{z_{\lambda}}n^{l(\lambda)}x^{\left|\lambda\right|}\prod_{k=1}^{l(\lambda)}(1-(-y)^{\lambda_{k}})\\
 & =\sum_{\lambda}\frac{p_{\lambda}}{z_{\lambda}}\sum_{n=0}^{\infty}n^{l(\lambda)}t^{n}x^{\left|\lambda\right|}\prod_{k=1}^{l(\lambda)}(1-(-y)^{\lambda_{k}})\\
 & =\sum_{\lambda}\frac{p_{\lambda}}{z_{\lambda}}\frac{A_{l(\lambda)}(t)}{(1-t)^{l(\lambda)+1}}x^{\left|\lambda\right|}\prod_{k=1}^{l(\lambda)}(1-(-y)^{\lambda_{k}}),
\end{align*}
where in the last step we are using (\ref{e-eulerian}). This proves
part (a). Note that setting $y=1$ in (a) gives
\begin{align}
\label{e-p-peaks}
\frac{1}{1-tE(x)H(x)}=\sum_{\lambda \textup{ odd}}\frac{p_{\lambda}}{z_{\lambda}}2^{l(\lambda)}\frac{A_{l(\lambda)}(t)}{(1-t)^{l(\lambda)+1}}x^{\left|\lambda\right|}.
\end{align}

To prove part (b), we begin with
\begin{align*}
\frac{H(x)}{1-tE(yx)H(x)} & =\exp\Big(\sum_{k=1}^{\infty}\frac{p_{k}}{k}x^{k}\Big)\sum_{n=0}^{\infty}t^{n}\exp\Big(n\sum_{k=1}^{\infty}\frac{p_{k}}{k}(-1)^{k-1}y^{k}x^{k}\Big)\exp\Big(n\sum_{k=1}^{\infty}\frac{p_{k}}{k}x^{k}\Big)\\
 & =\sum_{n=0}^{\infty}t^{n}\exp\Big(\sum_{k=1}^{\infty}\frac{p_{k}}{k}(n(1-(-y)^{k})+1)x^{k}\Big)\\
 & =\sum_{n=0}^{\infty}t^{n}\sum_{\lambda}\frac{p_{\lambda}}{z_{\lambda}}x^{\left|\lambda\right|}\prod_{k=1}^{l(\lambda)}(n(1-(-y)^{\lambda_{k}})+1).
\end{align*}
Setting $y=1$, we obtain 
\begin{align*}
\frac{H(x)}{1-tE(x)H(x)} & =\sum_{n=0}^{\infty}t^{n}\sum_{\lambda}\frac{p_{\lambda}}{z_{\lambda}}x^{\left|\lambda\right|}\prod_{k=1}^{l(\lambda)}(n(1-(-1)^{\lambda_{k}})+1)\\
 & =\sum_{n=0}^{\infty}t^{n}\sum_{\lambda}\frac{p_{\lambda}}{z_{\lambda}}x^{\left|\lambda\right|}(2n+1)^{o(\lambda)}\\
 & =\sum_{\lambda}\frac{p_{\lambda}}{z_{\lambda}}\frac{B_{o(\lambda)}(t)}{(1-t)^{o(\lambda)+1}}x^{\left|\lambda\right|}.
\end{align*}

Finally, using parts (a) and (b), we have
\begin{align*}
\frac{1+tH(x)}{1-t^{2}E(x)H(x)} & =\frac{1}{1-t^{2}E(x)H(x)}+t\frac{H(x)}{1-t^{2}E(x)H(x)}\\
 & =\sum_{\lambda\text{ odd}}\frac{p_{\lambda}}{z_{\lambda}}2^{l(\lambda)}\frac{A_{l(\lambda)}(t^{2})}{(1-t^{2})^{l(\lambda)+1}}x^{\left|\lambda\right|}+t\sum_{\lambda}\frac{p_{\lambda}}{z_{\lambda}}\frac{B_{o(\lambda)}(t^{2})}{(1-t^{2})^{o(\lambda)+1}}x^{\left|\lambda\right|},
\end{align*}
thus proving part (c).
\end{proof}

\section{General plethystic formulas}

In this section, we derive general formulas\textemdash analogous to (\ref{e-desmaj})\textemdash that
will allow us to compute the joint distribution of $\pk$ and $\des$,
the joint distribution of $\lpk$ and $\des$, and the distribution
of $\udr$ over any set $\Pi\subseteq\mathfrak{S}_{n}$ of permutations
whose quasisymmetric generating function $Q(\Pi)$ is symmetric. Our formulas will involve the following map: Given $y\in A$ and an integer $k\in\mathbb{Z}$, define the homomorphism
$\Theta_{y,k}\colon\Lambda\rightarrow\mathbb{Q}[[y]]$ by
\[
\Theta_{y,k}(f)\coloneqq f[k(1-\alpha)]\rvert_{\alpha=-y}.
\]
That is, $\Theta_{y,k}$ first sends a symmetric function $f$ to the plethystic substitution $f[k(1-\alpha)]$, where $\alpha$ is a variable, and then evaluates this expression at $\alpha=-y$.

\subsection{General formula for peaks and descents}

Given a set $\Pi$ of permutations, define
\[
P^{(\pk,\des)}(\Pi;y,t)\coloneqq\sum_{\pi\in\Pi}y^{\pk(\pi)+1}t^{\des(\pi)+1},
\]
\[
P^{\pk}(\Pi;t)\coloneqq\sum_{\pi\in\Pi}t^{\pk(\pi)+1},\quad\text{and}\quad A(\Pi;t)\coloneqq\sum_{\pi\in\Pi}t^{\des(\pi)+1}.
\]
These encode the joint distribution of $\pk$ and $\des$, the distribution
of $\pk$, and the distribution of $\des$, respectively, over $\Pi$.
If $\Pi$ has a symmetric quasisymmetric function $Q(\Pi)$, then
the following theorem allows one to describe these polynomials in
terms of $\Theta_{y,k}(Q(\Pi))$. Moreover, if we know the power
sum expansion of $Q(\Pi)$, then this theorem allows us to describe
these polynomials in terms of Eulerian polynomials.

\begin{thm}
\label{t-pkdes} Let $\Pi\subseteq\mathfrak{S}_{n}$, with $n\ge1$, and suppose that
the quasisymmetric generating function $Q(\Pi)$ is a symmetric function
with power sum expansion $Q(\Pi)=q(p_1, p_2, p_3,\dots) = \sum_{\lambda\vdash n}c_{\lambda}p_{\lambda}$.
Then 
\leqnomode
\begin{multline*}
\tag{a}
\quad
\frac{1}{1+y}\left(\frac{1+yt}{1-t}\right)^{n+1}P^{(\pk,\des)}\left(\Pi;\frac{(1+y)^{2}t}{(y+t)(1+yt)},\frac{y+t}{1+yt}\right)=\sum_{k=0}^{\infty}\Theta_{y,k}(Q(\Pi))t^{k}\\
  =\sum_{\substack{\lambda\vdash n}}c_{\lambda}
    \frac{A_{l(\lambda)}(t)}{(1-t)^{l(\lambda)+1}}\prod_{k=1}^{l(\lambda)}(1-(-y)^{\lambda_{k}}),
\quad
\end{multline*}
\begin{multline*}
\tag{b}
\qquad
\frac{1}{2}\left(\frac{1+t}{1-t}\right)^{n+1}P^{\pk}\left(\Pi;\frac{4t}{(1+t)^{2}}\right)
  =\sum_{k=0}^{\infty}\Theta_{1,k}(Q(\Pi))t^{k}\\
  =\sum_{\substack{\lambda\vdash n\\
  \textrm{\textup{odd}}
}
}c_{\lambda}2^{l(\lambda)}\frac{A_{l(\lambda)}(t)}{(1-t)^{l(\lambda)+1}}
=\sum_{k=1}^n a_{k} 2^{k}\frac{A_{k}(t)}{(1-t)^{k+1}}
\qquad
\end{multline*}
where
\[\sum_{k=1}^n a_{k}w^k=q(w,0,w, 0, w, 0, \dots),\] 
and
\begin{equation*}
\tag{c}
\frac{A(\Pi;t)}{(1-t)^{n+1}}  =\sum_{k=0}^{\infty}\Theta_{0,k}(Q(\Pi))t^{k}
=\sum_{\lambda\vdash n}c_{\lambda}\frac{A_{l(\lambda)}(t)}{(1-t)^{l(\lambda)+1}}
=\sum_{k=1}^n b_{k}\frac{A_{k}(t)}{(1-t)^{k+1}}
\end{equation*}
where 
\[\sum_{k=1}^n b_{k} w^k=q(w,w,w,  \dots).\] 
\end{thm}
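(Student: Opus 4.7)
The plan is to prove part (a) first, then obtain (b) and (c) by specializing at $y=1$ and $y=0$. The central identity to establish is
\begin{equation*}
\bigl\langle Q(\Pi),\, (1-tE(yx)H(x))^{-1}\bigr\rangle \;=\; x^{n}\sum_{k=0}^{\infty}\Theta_{y,k}(Q(\Pi))\,t^{k}.
\end{equation*}
Once this identity is in hand, substituting the ribbon expansion from Lemma \ref{l-ribexp}(a) on the left side (together with Theorem \ref{t-QrL}) recovers the first equality of (a), and substituting the power sum expansion from Lemma \ref{l-psexp}(a) together with the orthogonality $\langle p_\lambda,p_\tau\rangle = z_\lambda\delta_{\lambda,\tau}$ recovers the second.

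To establish the central identity, I would first recognize $(E(yx)H(x))^{k}$ as a plethystic image of $H$. By Lemma \ref{l-Hps}(a) and (c) together with Lemma \ref{l-HE}(a) and (b),
\begin{equation*}
H[kxX(1-\alpha)] \;=\; H[kxX]\,H[-k\alpha xX] \;=\; H(x)^{k}\,E(-\alpha x)^{k},
\end{equation*}
so setting $\alpha=-y$ yields $(E(yx)H(x))^{k}$. Since $g=kx(1-\alpha)$ does not involve the $x_{i}$, Lemma \ref{l-plethHsp} gives $Q(\Pi)[kx(1-\alpha)] = \langle Q(\Pi), H[kxX(1-\alpha)]\rangle$. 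Using $p_{i}[xg]=x^{i}p_{i}[g]$ and the fact that $Q(\Pi)$ is homogeneous of degree $n$, one gets $Q(\Pi)[kx(1-\alpha)] = x^{n}\,Q(\Pi)[k(1-\alpha)]$; evaluating at $\alpha=-y$ produces $\langle Q(\Pi), (E(yx)H(x))^{k}\rangle = x^{n}\Theta_{y,k}(Q(\Pi))$. Multiplying by $t^{k}$ and summing over $k\geq 0$ gives the central identity.

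On the ribbon expansion side, since $Q(\Pi)$ is homogeneous of degree $n\geq 1$, only the $x^{n}$-term of Lemma \ref{l-ribexp}(a) contributes (the constant $\frac{1}{1-t}$ drops out); applying Theorem \ref{t-QrL} to identify $\langle Q(\Pi), r_{L}\rangle$ with the number of $\pi \in \Pi$ with $\Comp(\pi) = L$ converts the ribbon sum into the polynomial $P^{(\pk,\des)}(\Pi;\cdot,\cdot)$ at the prescribed arguments, giving the first equality of (a) after dividing out $x^{n}$. Substituting instead the power sum expansion from Lemma \ref{l-psexp}(a) and using $\langle p_{\lambda},p_{\tau}\rangle = z_{\lambda}\delta_{\lambda,\tau}$ with $Q(\Pi)=\sum_{\lambda\vdash n} c_{\lambda}p_{\lambda}$ immediately produces the second equality.

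Parts (b) and (c) then follow by specialization. Setting $y=1$ gives $(y+t)/(1+yt)=1$ (eliminating $\des$) and $(1+y)^{2}t/((y+t)(1+yt)) = 4t/(1+t)^{2}$; the power-sum factor $\prod_{k}(1-(-1)^{\lambda_{k}})$ vanishes unless every part of $\lambda$ is odd, in which case it equals $2^{l(\lambda)}$. Setting $y=0$ gives $(1+y)^{2}t/((y+t)(1+yt))=1$ (eliminating $\pk$) and $(y+t)/(1+yt) = t$, while each $(1-(-0)^{\lambda_{k}})=1$. Finally, since $q(w,0,w,0,\dots) = \sum_{\lambda\vdash n\text{ odd}}c_{\lambda}w^{l(\lambda)}$ and $q(w,w,w,\dots) = \sum_{\lambda\vdash n}c_{\lambda}w^{l(\lambda)}$, grouping by $l(\lambda)$ produces the $a_{k}$- and $b_{k}$-expressions. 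The main technical subtlety is the bookkeeping of $x$, $y$, and $\alpha$ in the central identity---in particular, the homogeneity step $Q(\Pi)[xg] = x^{n} Q(\Pi)[g]$ that strips out the $x^{n}$ factor; the remainder of the argument is a direct assembly of the preliminary lemmas.
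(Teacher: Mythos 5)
Your proof is correct and follows essentially the same route as the paper's: both equate the ribbon expansion (Lemma \ref{l-ribexp}(a) combined with Theorem \ref{t-QrL}), the plethystic scalar-product expression (the content of Lemma \ref{l-scalprodh}, which you re-derive inline from Lemmas \ref{l-plethHsp}, \ref{l-Hps}, and \ref{l-HE}), and the power sum expansion (Lemma \ref{l-psexp}(a) with orthogonality of the $p_\lambda$) of the same scalar product, and then obtain (b) and (c) by specializing at $y=1$ and $y=0$. The only difference is bookkeeping: the paper sets $x=1$ at the outset and lets the scalar product with the homogeneous $Q(\Pi)$ select the degree-$n$ component, whereas you carry the factor $x^{n}$ explicitly via the homogeneity identity $Q(\Pi)[xg]=x^{n}Q(\Pi)[g]$.
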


We note that taking $\Pi = \mathfrak{S}_n$ in Theorem \ref{t-pkdes} (a) and simplifying yields the formula
\begin{equation}
A_{n}(t)=\left(\frac{1+yt}{1+y}\right)^{n+1}P_{n}^{(\pk,\des)}\left(\frac{(1+y)^{2}t}{(y+t)(1+yt)},\frac{y+t}{1+yt}\right)\label{e-apkdes}
\end{equation}
due to the second author \cite[Theorem 4.2]{Zhuang2017}. Hence, Theorem \ref{t-pkdes} (a) generalizes (\ref{e-apkdes}) in the same way that (\ref{e-desmaj}) generalizes (\ref{e-qeulerian}).

The proof of Theorem \ref{t-pkdes} requires the following technical lemma.
\begin{lem}
\label{l-scalprodh} Let $f\in A$, let $\alpha\in A$ be a variable,
and let $k\in\mathbb{Z}$. Then $f[k(1-\alpha)]=\left\langle f,H^{k}E(-\alpha)^{k}\right\rangle $.
\end{lem}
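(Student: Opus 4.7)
The plan is to reduce the identity to a direct application of Lemma \ref{l-plethHsp}, using the plethystic manipulations developed in Lemmas \ref{l-Hps} and \ref{l-HE} to simplify the auxiliary factor $H[k(1-\alpha)X]$.

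First, observe that the argument $k(1-\alpha)$ is a polynomial in the variable $\alpha$ and the integer $k$, and contains none of the variables $x_1, x_2, \dots$. Thus, Lemma \ref{l-plethHsp} applies with $g = k(1-\alpha)$, yielding
\begin{equation*}
f[k(1-\alpha)] = \bigl\langle f[X], H[k(1-\alpha)X] \bigr\rangle = \bigl\langle f, H[k(1-\alpha)X] \bigr\rangle,
\end{equation*}
since $f[X] = f$. So it remains to show that $H[k(1-\alpha)X] = H^k E(-\alpha)^k$.

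To compute this, I would apply the rules for $H$-plethysms in sequence. By Lemma \ref{l-Hps}(c), we have $H[k(1-\alpha)X] = H[(1-\alpha)X]^k$, so it suffices to handle the case $k=1$. Writing $(1-\alpha)X = X + (-\alpha X)$ and applying Lemma \ref{l-Hps}(a) gives $H[(1-\alpha)X] = H[X]\cdot H[-\alpha X]$. For the first factor, Lemma \ref{l-HE}(a) (with the monic term $\mathsf{m} = 1$) yields $H[X] = H(1) = H$. For the second, Lemma \ref{l-HE}(b) (with $\mathsf{m} = \alpha$, which is a monic term since it is a variable) yields $H[-\alpha X] = E(-\alpha)$. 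Multiplying gives $H[(1-\alpha)X] = H\cdot E(-\alpha)$, and raising to the $k$th power completes the derivation.

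There is no real obstacle here; the only subtle point is checking that the hypotheses of Lemma \ref{l-plethHsp} are satisfied, namely that $k(1-\alpha)$ does not involve any of the variables $x_i$, and that $\alpha$ is treated as a variable (so that Lemma \ref{l-HE}(b) applies with $\alpha$ as a monic term). Once those checks are made, the result follows by stringing together the identities above.
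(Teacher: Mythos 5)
Your proposal is correct and follows essentially the same route as the paper's proof: apply Lemma \ref{l-plethHsp} with $g=k(1-\alpha)$, then simplify $H[k(1-\alpha)X]$ to $H^{k}E(-\alpha)^{k}$ via Lemma \ref{l-Hps} (c), (a) and Lemma \ref{l-HE}. The only cosmetic difference is that you spell out $H[X]=H(1)=H$ as a separate step, which the paper leaves implicit.
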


\begin{proof}
Recall that $X = x_1 + x_2 + \cdots$. We have
\begin{align*}
f[k(1-\alpha)] & =\left\langle f[X],H[k(1-\alpha)X]\right\rangle && \text{(by Lemma }\ref{l-plethHsp}\text{)}\\
 & =\left\langle f[X],H[(1-\alpha)X]^{k}\right\rangle && \text{(by Lemma }\ref{l-Hps}\text{ (c))}\\
 & =\left\langle f[X],H[X]^{k}H[-\alpha X]^{k}\right\rangle  && \text{(by Lemma }\ref{l-Hps}\text{ (a))}\\
 & =\left\langle f,H^{k}E(-\alpha)^{k}\right\rangle  && \text{(by Lemma }\ref{l-HE}\text{ (b))},
\end{align*}
thus completing the proof.
\end{proof}

\begin{proof}[Proof of Theorem \ref{t-pkdes}]
We first prove part (a).
 Consider the following three expressions for the scalar product
$\left\langle Q(\Pi),(1-tE(y)H)^{-1}\right\rangle $. First, from
Lemma \ref{l-ribexp} (a) we have
\begin{multline*}
\left\langle Q(\Pi),\frac{1}{1-tE(y)H}\right\rangle \\
=\frac{1}{1+y}\sum_{L\vDash n}\left(\frac{1+yt}{1-t}\right)^{n+1}\left(\frac{(1+y)^{2}t}{(y+t)(1+yt)}\right)^{\pk(L)+1}\left(\frac{y+t}{1+yt}\right)^{\des(L)+1}\left\langle Q(\Pi),r_{L}\right\rangle 
\end{multline*}
which by Theorem \ref{t-QrL} simplifies to 
\begin{align*}
\left\langle Q(\Pi),\frac{1}{1-tE(y)H}\right\rangle  & =\frac{1}{1+y}\left(\frac{1+yt}{1-t}\right)^{n+1}\sum_{\pi\in\Pi}\left(\frac{(1+y)^{2}t}{(y+t)(1+yt)}\right)^{\pk(\pi)+1}\left(\frac{y+t}{1+yt}\right)^{\des(\pi)+1}\\
 & =\frac{1}{1+y}\left(\frac{1+yt}{1-t}\right)^{n+1}P^{(\pk,\des)}\left(\Pi;\frac{(1+y)^{2}t}{(y+t)(1+yt)},\frac{y+t}{1+yt}\right).
\end{align*}
Next, from Lemma \ref{l-scalprodh} we have
\begin{align*}
\left\langle Q(\Pi),\frac{1}{1-tE(y)H}\right\rangle  & =\sum_{k=0}^{\infty}\left\langle Q(\Pi),E(y)^{k}H^{k}\right\rangle t^{k}=\sum_{k=0}^{\infty}\Theta_{y,k}(Q(\Pi))t^{k}.
\end{align*}
{\allowdisplaybreaks
Finally, from Lemma \ref{l-psexp} (a) we have
\begin{align*}
\left\langle Q(\Pi),\frac{1}{1-tE(y)H}\right\rangle  & =\left\langle \sum_{\lambda\vdash n}c_{\lambda}p_{\lambda},\sum_{\tau}\frac{1}{z_{\tau}}\frac{A_{l(\tau)}(t)}{(1-t)^{l(\tau)+1}}p_{\tau}\prod_{k=1}^{l(\tau)}(1-(-y)^{\tau_{k}})\right\rangle \\
 & =\sum_{\substack{\lambda\vdash n\\
\tau
}
}\frac{c_{\lambda}}{z_{\tau}}\frac{A_{l(\tau)}(t)}{(1-t)^{l(\tau)+1}}\left\langle p_{\lambda},p_{\tau}\right\rangle \prod_{k=1}^{l(\tau)}(1-(-y)^{\tau_{k}})\\
 & =\sum_{\lambda\vdash n}c_{\lambda}\frac{A_{l(\lambda)}(t)}{(1-t)^{l(\lambda)+1}}\prod_{k=1}^{l(\lambda)}(1-(-y)^{\lambda_{k}}).
\end{align*}}
The desired result follows from equating these three expressions for
$\left\langle Q(\Pi),(1-tE(y)H)^{-1}\right\rangle $.

Setting $y=1$ in part (a) gives the first two equalities in part (b). The last equality in (b) comes from the fact that
$\sum_{\substack{\lambda\vdash n\\ \textrm{\textup{odd}}}} c_\lambda w^{l(\lambda)}$ is obtained from $Q(\Pi)$ by setting $p_i=w$ for $i$ odd and $p_i=0$ for $i$ even. Part (c) is obtained similarly. 
\end{proof}
We note that since $\Theta_{0,k}(Q(\Pi))=Q(\Pi)[k]$, the first equality of part (c) is the $q=1$ specialization of (\ref{e-desmaj}).

We may also refine Theorem \ref{t-pkdes} by additional permutation statistics. Given an integer-valued statistic $\st$, define
\[
Q^{\st}(\Pi)\coloneqq\sum_{\pi\in\Pi}F_{\Comp(\pi)}z^{\st(\pi)}
\]
and
\[
P^{(\pk,\des,\st)}(\Pi;y,t,z)\coloneqq\sum_{\pi\in\Pi}y^{\pk(\pi)+1}t^{\des(\pi)+1}z^{\st(\pi)}.
\]
Then the next theorem is proved in a similar way to Theorem \ref{t-pkdes} (a). We will also use analogous generalizations of parts (b) and (c) of Theorem \ref{t-pkdes}, but we will not state them here.

\begin{thm}
\label{t-pkdesst} Let $\Pi\subseteq\mathfrak{S}_{n}$, with $n\ge1$,  and let $\st$
be a permutation statistic. Suppose that $Q^{\st}(\Pi)$ is a symmetric
function \textup{(}in the variables $x_{1},x_{2},\dots$\textup{)}.
Then we have
\begin{align*}
\frac{1}{1+y}\left(\frac{1+yt}{1-t}\right)^{n+1}P^{(\pk,\des,\st)}\left(\Pi;\frac{(1+y)^{2}t}{(y+t)(1+yt)},\frac{y+t}{1+yt},z\right) & =\sum_{k=0}^{\infty}\Theta_{y,k}(Q^{\st}(\Pi))t^{k}.
\end{align*}
\end{thm}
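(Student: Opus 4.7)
The plan is to mirror the proof of Theorem \ref{t-pkdes} (a) almost verbatim, carrying the new weight $z^{\st(\pi)}$ through as an inert coefficient. The engine, as before, is to evaluate the scalar product
$$\bigl\langle Q^{\st}(\Pi),\, (1-tE(y)H)^{-1}\bigr\rangle$$
in two different ways. Since $Q^{\st}(\Pi)$ lives in $\Lambda\otimes\mathbb{Q}[[z]]$ while the second argument lies in $\Lambda[[y,t]]$, the scalar product is extended $\mathbb{Q}[[z]]$-linearly from the usual pairing on $\Lambda$; the key point is that $z$ commutes with everything in sight and is regarded as inert under plethysm, so that $\Theta_{y,k}$ extends $\mathbb{Q}[[z]]$-linearly as well.

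On one hand, substituting the ribbon Schur expansion from Lemma \ref{l-ribexp} (a) and using bilinearity yields
\begin{multline*}
\bigl\langle Q^{\st}(\Pi),(1-tE(y)H)^{-1}\bigr\rangle = \frac{1}{1+y}\sum_{L\vDash n}\left(\frac{1+yt}{1-t}\right)^{n+1}\\
\times\left(\frac{(1+y)^{2}t}{(y+t)(1+yt)}\right)^{\pk(L)+1}\left(\frac{y+t}{1+yt}\right)^{\des(L)+1}\bigl\langle Q^{\st}(\Pi),\, r_{L}\bigr\rangle.
\end{multline*}
By the obvious generalization of Theorem \ref{t-QrL}, whose proof carries over unchanged since $Q^{\st}(\Pi)$ is symmetric, we have $\bigl\langle Q^{\st}(\Pi), r_L\bigr\rangle = \sum_{\pi\in\Pi,\,\Comp(\pi)=L} z^{\st(\pi)}$. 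Collapsing the sum over $L$ into a sum over $\pi\in\Pi$ reproduces the left-hand side of the claim.

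On the other hand, the geometric series expansion together with Lemma \ref{l-scalprodh} gives
$$\bigl\langle Q^{\st}(\Pi),\, (1-tE(y)H)^{-1}\bigr\rangle = \sum_{k=0}^\infty \bigl\langle Q^{\st}(\Pi),\, H^{k}E(y)^{k}\bigr\rangle\, t^{k} = \sum_{k=0}^\infty \Theta_{y,k}(Q^{\st}(\Pi))\, t^{k}.$$
Equating the two expressions proves the theorem. The only conceptual obstacle is verifying that Lemmas \ref{l-plethHsp} and \ref{l-scalprodh} and the definition of $\Theta_{y,k}$ behave correctly when applied to an element of $\Lambda\otimes\mathbb{Q}[[z]]$; this is routine once $z$ is treated as a scalar under plethysm, and beyond this bookkeeping no new computation is required.
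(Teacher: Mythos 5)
Your proof is correct and follows exactly the route the paper intends: the paper states only that Theorem \ref{t-pkdesst} ``is proved in a similar way to Theorem \ref{t-pkdes} (a),'' and your argument is precisely that adaptation, pairing $Q^{\st}(\Pi)$ against $(1-tE(y)H)^{-1}$ and evaluating via Lemma \ref{l-ribexp} (a) on one side and Lemma \ref{l-scalprodh} on the other, with $z$ carried along as an inert scalar. The one point worth making explicit in your ``bookkeeping'' remark is that symmetry of $Q^{\st}(\Pi)$ means each coefficient of $z^{j}$, namely $Q(\{\pi\in\Pi:\st(\pi)=j\})$, is itself symmetric, which is what licenses applying Theorem \ref{t-QrL} slice by slice.
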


\subsection{General formula for left peaks and descents}

We now prove an analogue of Theorem \ref{t-pkdes} for the joint distribution
of $\lpk$ and $\des$. Given a set of permutations $\Pi$, define
\[
P^{(\lpk,\des)}(\Pi;y,t)\coloneqq\sum_{\pi\in\Pi}y^{\lpk(\pi)}t^{\des(\pi)}\quad\text{and}\quad P^{\lpk}(\Pi;t)\coloneqq\sum_{\pi\in\Pi}t^{\lpk(\pi)}.
\]
\begin{thm}
\label{t-lpkdes} Let $\Pi\subseteq\mathfrak{S}_{n}$ and suppose
that the quasisymmetric generating function $Q(\Pi)$ is a symmetric
function with power sum 
expansion $Q(\Pi)=q(p_1, p_2, p_3,\dots) = \sum_{\lambda\vdash n}c_{\lambda}p_{\lambda}$.
Then 
\leqnomode
\begin{align*}
\tag{a}
\frac{(1+yt)^{n}}{(1-t)^{n+1}}P^{(\lpk,\des)}\left(\Pi;\frac{(1+y)^{2}t}{(y+t)(1+yt)},\frac{y+t}{1+yt}\right) & =\sum_{k=0}^{\infty}\Theta_{y,k}(Q(\Pi)[X+1])t^{k}
\end{align*}
and
\begin{multline*}
\tag{b}
\qquad
\frac{(1+t)^{n}}{(1-t)^{n+1}}P^{\lpk}\left(\Pi;\frac{4t}{(1+t)^{2}}\right)  =\sum_{k=0}^{\infty}\Theta_{1,k}(Q(\Pi)[X+1])t^{k}\\
  =\sum_{\lambda\vdash n}c_{\lambda}\frac{B_{o(\lambda)}(t)}{(1-t)^{o(\lambda)+1}}
  = \sum_{k=0}^n d_k \frac{B_k(t)}{(1-t)^{k+1}}
 \qquad
\end{multline*}
where
\begin{equation*}
\sum_{k=0}^n d_k w^k = q(w,1,w,1,w,1,\dots).
\end{equation*}
\end{thm}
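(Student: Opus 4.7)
The plan is to mirror the proof of Theorem \ref{t-pkdes}, now using the left-peak generating expression $H/(1-tE(y)H)$ in place of $(1-tE(y)H)^{-1}$, and invoking Lemmas \ref{l-ribexp} (b) and \ref{l-psexp} (b). Specifically, I will compute the scalar product $\langle Q(\Pi),\, H/(1-tE(y)H)\rangle$ (understood as the pairing with the degree-$n$ part of the second factor) in two different ways and equate the results to obtain part~(a).

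For the ribbon-side computation, Lemma \ref{l-ribexp} (b) expands $H/(1-tE(y)H)$ as a sum of ribbon Schur functions $r_L$, and Theorem \ref{t-QrL} identifies $\langle Q(\Pi), r_L\rangle$ with the number of $\pi\in\Pi$ with descent composition $L$. This directly yields
\begin{equation*}
\left\langle Q(\Pi),\, \frac{H}{1-tE(y)H}\right\rangle = \frac{(1+yt)^{n}}{(1-t)^{n+1}}\, P^{(\lpk,\des)}\!\left(\Pi;\, \frac{(1+y)^{2}t}{(y+t)(1+yt)},\, \frac{y+t}{1+yt}\right),
\end{equation*}
which is precisely the left-hand side of~(a).

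For the plethystic-side computation, expanding the geometric series gives $\sum_{k\ge 0} t^k\, \langle Q(\Pi),\, H^{k+1}E(y)^k\rangle$. The crux is to show that the $k$-th coefficient equals $\Theta_{y,k}(Q(\Pi)[X+1])$. By Lemma \ref{l-Hps} (a), (c) together with Lemma \ref{l-HE} (b), one has $H^{k+1}E(-\alpha)^k = H[(k(1-\alpha)+1)X]$, and Lemma \ref{l-plethHsp} then gives $\langle Q(\Pi),\, H^{k+1}E(-\alpha)^k\rangle = Q(\Pi)[k(1-\alpha)+1]$. Since $p_1[g]=g$ and constants are fixed by plethysm, associativity yields $(X+1)[k(1-\alpha)] = k(1-\alpha)+1$, so $Q(\Pi)[X+1][k(1-\alpha)] = Q(\Pi)[k(1-\alpha)+1]$; specializing $\alpha=-y$ converts $E(-\alpha)^k$ into $E(y)^k$ and matches the definition of $\Theta_{y,k}$. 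Equating the two computations proves part~(a).

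Part~(b) follows by setting $y=1$ in~(a), which gives the first equality. The second equality is obtained by substituting the power sum expansion $Q(\Pi)=\sum c_\lambda p_\lambda$ into Lemma \ref{l-psexp} (b) and applying the orthogonality $\langle p_\lambda, p_\tau\rangle = z_\lambda \delta_{\lambda\tau}$. The final equality is a regrouping by the value of $o(\lambda)$: the coefficient $d_k$ of $B_k(t)/(1-t)^{k+1}$ equals $\sum_{\lambda\vdash n,\, o(\lambda)=k} c_\lambda$, which is exactly the coefficient of $w^k$ in the specialization of $q(p_1,p_2,\ldots)$ at $p_i=w$ for $i$ odd and $p_i=1$ for $i$ even, namely $q(w,1,w,1,\ldots)$. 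The main obstacle is the bookkeeping in the double-plethysm identification $\langle Q(\Pi),\, H^{k+1}E(y)^k\rangle = \Theta_{y,k}(Q(\Pi)[X+1])$; once that is carried out, the rest is a direct parallel to the proof of Theorem \ref{t-pkdes}.
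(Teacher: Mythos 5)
Your proposal is correct, and its overall skeleton (two evaluations of the degree-$n$ pairing $\left\langle Q(\Pi),H/(1-tE(y)H)\right\rangle$, with the ribbon side and the power-sum side handled exactly as in the paper) matches the paper's proof; the one place where you genuinely diverge is the plethystic side. The paper first moves the extra factor of $H$ across the scalar product using Lemma \ref{l-spconst} with $\mathsf{m}=1$, obtaining $\left\langle Q(\Pi)[X+1],(1-tE(y)H)^{-1}\right\rangle$, and only then applies Lemma \ref{l-scalprodh} termwise. You instead expand the geometric series immediately, write $H^{k+1}E(-\alpha)^{k}=H[(k(1-\alpha)+1)X]$ via Lemmas \ref{l-Hps} and \ref{l-HE}, apply Lemma \ref{l-plethHsp} to get $Q(\Pi)[k(1-\alpha)+1]$, and then invoke associativity of plethysm, $Q(\Pi)[X+1][k(1-\alpha)]=Q(\Pi)[(X+1)[k(1-\alpha)]]=Q(\Pi)[k(1-\alpha)+1]$. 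Both routes are valid; yours avoids Lemma \ref{l-spconst} entirely, at the cost of relying on the associativity identity $(f[g])[h]=f[g[h]]$, which the paper never states or proves (it only ever computes iterated plethysms by explicitly evaluating the inner one first, as in Lemma \ref{l-QIplsp1}). That identity is standard and, in your special case $g=X+1=p_{1}+1$, is easy to verify directly from $p_{i}[p_{1}+1]=p_{i}+1$ and the homomorphism property, so this is a gap of exposition rather than of substance; if you want the argument to be self-contained relative to the paper's toolkit, either prove this instance of associativity in a line or switch to the paper's Lemma \ref{l-spconst}. Your treatment of part (b) coincides with the paper's.
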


We will need a lemma involving plethysm in order
to prove this theorem.

\begin{lem}
\label{l-spconst} Let $f,g\in A$, and let $\mathsf{m}\in A$ be a monic term. Then $\left\langle f[X+\mathsf{m}],g\right\rangle =\left\langle f,H[\mathsf{m}X]g\right\rangle $.
\end{lem}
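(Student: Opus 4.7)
The plan is to reduce Lemma \ref{l-spconst} to two applications of the reproducing-kernel identity from Lemma \ref{l-plethHsp}, linked together by an interchange of two scalar products taken over independent alphabets.

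First I would introduce a fresh alphabet $y_1, y_2, \dots$ with $Y \coloneqq y_1 + y_2 + \cdots$, writing $\langle\,\cdot\,,\,\cdot\,\rangle_X$ and $\langle\,\cdot\,,\,\cdot\,\rangle_Y$ for the scalar products in the two alphabets. Applying Lemma \ref{l-plethHsp} with the roles of the two alphabets exchanged, and taking its external argument to be $X+\mathsf{m}$ (which contains no $y_i$), I would obtain
\begin{equation*}
f[X+\mathsf{m}] = \bigl\langle f[Y],\, H[(X+\mathsf{m})Y]\bigr\rangle_Y = \bigl\langle f[Y],\, H[XY]\,H[\mathsf{m}Y]\bigr\rangle_Y,
\end{equation*}
where the second equality is $H[a+b]=H[a]H[b]$ from Lemma \ref{l-Hps}(a).

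Next I would pair both sides with $g$ in the $x$-alphabet. Because $f[Y]$ and $H[\mathsf{m}Y]$ are free of the $x_i$ while $g$ is free of the $y_i$, the two scalar products commute, so
\begin{equation*}
\langle f[X+\mathsf{m}], g\rangle = \bigl\langle f[Y]\,H[\mathsf{m}Y],\, \langle H[XY], g\rangle_X\bigr\rangle_Y.
\end{equation*}
A second application of Lemma \ref{l-plethHsp} evaluates the inner scalar product as $g[Y]$, and so the right-hand side collapses to $\langle f[Y], H[\mathsf{m}Y]\,g[Y]\rangle_Y$. Renaming $Y$ back to $X$ then yields $\langle f, H[\mathsf{m}X]\,g\rangle$, as required.

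The hard part will be justifying the commutation of the two scalar products and checking that the hypothesis of Lemma \ref{l-plethHsp} holds at each step, since its external argument must be free of the alphabet over which the scalar product is being taken; this forces $\mathsf{m}$ to involve none of the $x_i$ or $y_i$. As a backup I would keep in mind a more pedestrian approach: reduce to $f = p_\lambda$ by linearity, expand $p_\lambda[X+\mathsf{m}] = \prod_i (p_{\lambda_i}+\mathsf{m}^{\lambda_i})$ via $p_n[X+\mathsf{m}] = p_n + \mathsf{m}^n$, expand $H[\mathsf{m}X] = \sum_\tau \mathsf{m}^{|\tau|} p_\tau/z_\tau$, and verify using the power-sum orthogonality $\langle p_\lambda, p_\tau\rangle = z_\lambda \delta_{\lambda\tau}$ that both sides collect the same multinomial coefficient $\prod_i \binom{m_i(\lambda)}{m_i(\tau)}$ in front of $\mathsf{m}^{|\tau|}\langle p_{\lambda\setminus\tau}, g\rangle$.
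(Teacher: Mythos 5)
Your proof is correct, but it takes a genuinely different route from the paper's. The paper argues by brute force on dual bases: it reduces to $f=m_{\lambda}$ and $g=h_{\tau}$, computes $m_{\lambda}[X+\mathsf{m}]=m_{\lambda}+\sum_{\lambda_{i}}m_{\lambda-\lambda_{i}}\mathsf{m}^{\lambda_{i}}$ directly from Theorem \ref{t-monic}, and matches the resulting scalar products against $\left\langle m_{\lambda},H(\mathsf{m})h_{\tau}\right\rangle$ using the duality of the monomial and complete bases. You instead exhibit multiplication by $H[\mathsf{m}X]$ as the adjoint of the translation $f\mapsto f[X+\mathsf{m}]$ by applying the Cauchy-kernel identity of Lemma \ref{l-plethHsp} twice over an auxiliary alphabet. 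Both work; your version is more conceptual and avoids the bookkeeping over distinct parts of $\lambda$, while the paper's stays entirely inside one alphabet and uses nothing beyond the definition of the scalar product. The two points you flag as potentially hard are in fact routine. The interchange of the two scalar products is just bilinearity once $H[XY]$ is expanded as $\sum_{\tau}p_{\tau}(X)p_{\tau}(Y)/z_{\tau}$, which has finitely many terms in each bidegree, and the inner evaluation $\left\langle H[XY],g\right\rangle_{X}=g[Y]$ is exactly Lemma \ref{l-plethHsp} with the alphabets exchanged. The requirement that $\mathsf{m}$ involve none of the $x_{i}$ (or $y_{i}$) is no loss of generality: the right-hand side $\left\langle f,H[\mathsf{m}X]g\right\rangle$ only makes sense when $\mathsf{m}$ acts as a scalar with respect to the $x$-alphabet---the paper's own proof likewise pulls $\mathsf{m}^{\lambda_{i}}$ out of scalar products---and in every application of the lemma one has $\mathsf{m}=1$. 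Your fallback power-sum computation is also valid and again differs from the paper's; it amounts to checking that $p_{\tau}^{\perp}$ applied to $p_{\lambda}$ produces the same multiplicity $z_{\tau}\prod_{j}\binom{m_{j}(\lambda)}{m_{j}(\tau)}$ that arises from expanding $\prod_{i}(p_{\lambda_{i}}+\mathsf{m}^{\lambda_{i}})$.
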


In the proof of this lemma, we use the notation $\tau\cup\{n\}$ to denote the partition obtained from $\tau$ by adding a part of size $n$, and we write $\tau-\tau_{i}$ to denote the partition obtained by removing the part $\tau_{i}$ from $\tau$. 
\begin{proof}
By linearity, it suffices to prove the result for $f=m_{\lambda}$ and $g=h_{\tau}$.
First, because $X+\mathsf{m}$ is a sum of monic terms, we have
\begin{align*}
m_{\lambda}[X+\mathsf{m}] & =m_{\lambda}(\mathsf{m},x_{1},x_{2},\dots)\\
 & =m_{\lambda}+\sum_{\lambda_{i}}m_{\lambda-\lambda_{i}}\mathsf{m}^{\lambda_{i}}
\end{align*}
where the sum is over all distinct parts $\lambda_{i}$ of $\lambda$.
For example, if $\lambda=(2,2,1)$, then we have
\[
m_{(2,2,1)}[X+\mathsf{m}]=m_{(2,2,1)}+m_{(2,2)}\mathsf{m}+m_{(2,1)}\mathsf{m}^{2}.
\]
Thus,
\begin{align*}
\left\langle m_{\lambda}[X+\mathsf{m}],h_{\tau}\right\rangle  & =\sum_{\lambda_{i}}\left\langle m_{\lambda-\lambda_{i}},h_{\tau}\right\rangle \mathsf{m}^{\lambda_{i}}\\
 & =\begin{cases}
\mathsf{m}^{n}, & \text{if }\lambda=\tau\cup\{n\}\\
0, & \text{otherwise}
\end{cases}\\
 & =\sum_{n=0}^{\infty}\left\langle m_{\lambda},h_{\tau\cup\{n\}}\right\rangle \mathsf{m}^{n}\\
 & =\sum_{n=0}^{\infty}\left\langle m_{\lambda},h_{n}h_{\tau}\right\rangle \mathsf{m}^{n}\\
 & =\left\langle m_{\lambda},H(\mathsf{m})h_{\tau}\right\rangle \\
 & =\left\langle m_{\lambda},H[\mathsf{m}X]h_{\tau}\right\rangle 
\end{align*}
and we are done.
\end{proof}

\begin{proof}[Proof of Theorem \ref{t-lpkdes}]
 We first derive two expressions for $\left\langle Q(\Pi),H/(1-tE(y)H)\right\rangle $.
From Lemma \ref{l-ribexp} (b) and Theorem \ref{t-QrL}, we obtain
\begin{align*}
\left\langle Q(\Pi),\frac{H}{1-tE(y)H}\right\rangle  & =\sum_{L\vDash n}\frac{(1+yt)^{n}}{(1-t)^{n+1}}\left(\frac{(1+y)^{2}t}{(y+t)(1+yt)}\right)^{\lpk(L)}\left(\frac{y+t}{1+yt}\right)^{\des(L)}\left\langle Q(\Pi),r_{L}\right\rangle \\
 & =\frac{(1+yt)^{n}}{(1-t)^{n+1}}\sum_{\pi\in\Pi}\left(\frac{(1+y)^{2}t}{(y+t)(1+yt)}\right)^{\lpk(\pi)}\left(\frac{y+t}{1+yt}\right)^{\des(\pi)}\\
 & =\frac{(1+yt)^{n}}{(1-t)^{n+1}}P^{(\lpk,\des)}\left(\Pi;\frac{(1+y)^{2}t}{(y+t)(1+yt)},\frac{y+t}{1+yt}\right).
\end{align*}
In addition, we have
\begin{align*}
\left\langle Q(\Pi),\frac{H}{1-tE(y)H}\right\rangle  
 & =\left\langle Q(\Pi)[X+1],\frac{1}{1-tE(y)H}\right\rangle  & \text{(by Lemma }\ref{l-spconst})\\
 & =\sum_{k=0}^{\infty}\left\langle Q(\Pi)[X+1],E(y)^{k}H^{k}\right\rangle t^{k}\\
 & =\sum_{k=0}^{\infty}\Theta_{y,k}(Q(\Pi)[X+1])t^{k} & \text{(by Lemma }\ref{l-scalprodh}).
\end{align*}

Equating these two expressions yields part (a). The first two equalities in part
(b) follow from equating the $y=1$ evaluation of these two expressions with
\begin{align*}
\left\langle Q(\Pi),\frac{H}{1-tEH}\right\rangle  & =\left\langle \sum_{\lambda\vdash n}c_{\lambda}p_{\lambda},\sum_{\tau}\frac{p_{\tau}}{z_{\tau}}\frac{B_{o(\tau)}(t)}{(1-t)^{o(\tau)+1}}\right\rangle \\
 & =\sum_{\substack{\lambda\vdash n\\
\tau
}
}\frac{c_{\lambda}}{z_{\tau}}\left\langle p_{\lambda},p_{\tau}\right\rangle \frac{B_{o(\tau)}(t)}{(1-t)^{o(\tau)+1}}\\
 & =\sum_{\lambda\vdash n}c_{\lambda}\frac{B_{o(\lambda)}(t)}{(1-t)^{o(\lambda)+1}},
\end{align*}
which is obtained using Lemma \ref{l-psexp} (b). 
The last equality in (b) comes from the fact that
$\sum_{\lambda\vdash n} c_\lambda w^{o(\lambda)}$ is obtained from $Q(\Pi)$ by setting $p_i=w$ for $i$ odd and $p_i=1$ for $i$ even.
\end{proof}

We state a refinement of Theorem \ref{t-lpkdes} (a) with an additional statistic $\st$, which is proved in a similar way. Define
\[
P^{(\lpk,\des,\st)}(\Pi;y,t,z)\coloneqq\sum_{\pi\in\Pi}y^{\lpk(\pi)}t^{\des(\pi)}z^{\st(\pi)}.
\]
\begin{thm}
\label{t-lpkdesst} Let $\Pi\subseteq\mathfrak{S}_{n}$, with $n\ge1$, and let $\st$
be a permutation statistic. Suppose that $Q^{\st}(\Pi)$ is a symmetric
function \textup{(}in the variables $x_{1},x_{2},\dots$\textup{)}.
Then
\begin{align*}
\frac{(1+yt)^{n}}{(1-t)^{n+1}}P^{(\lpk,\des,\st)}\left(\Pi;\frac{(1+y)^{2}t}{(y+t)(1+yt)},\frac{y+t}{1+yt},z\right) & =\sum_{k=0}^{\infty}\Theta_{y,k}(Q^{\st}(\Pi)[X+1])t^{k}.
\end{align*}
\end{thm}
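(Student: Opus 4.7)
The plan is to mirror the proof of Theorem \ref{t-lpkdes} (a) verbatim, with $Q^{\st}(\Pi)$ playing the role of $Q(\Pi)$ throughout. I would compute the scalar product $\langle Q^{\st}(\Pi), H/(1-tE(y)H)\rangle$ in two different ways and equate them, exactly as in the unrefined case, treating $z$ as an inert variable throughout.

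For the first expression, I would apply the ribbon expansion from Lemma \ref{l-ribexp} (b) and then invoke Theorem \ref{t-QrL} applied to $Q^{\st}(\Pi)$. The key point is that since $Q^{\st}(\Pi)=\sum_{d}z^{d} Q(\Pi_d)$, where $\Pi_d=\{\pi\in\Pi:\st(\pi)=d\}$, and since $z$ is algebraically independent of the $x_i$, the symmetry of $Q^{\st}(\Pi)$ in the variables $x_1,x_2,\dots$ forces each $Q(\Pi_d)$ to be symmetric; Theorem \ref{t-QrL} then applies coefficientwise in $z$ to give
\[
\langle Q^{\st}(\Pi), r_L\rangle \;=\; \sum_{\pi\in\Pi,\,\Comp(\pi)=L} z^{\st(\pi)}.
\]
Substituting this into the ribbon expansion and regrouping by permutations in $\Pi$ produces the left-hand side of the desired identity.

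For the second expression, I would apply Lemma \ref{l-spconst} with $\mathsf{m}=1$ to transfer the factor of $H=H[X]$ across the scalar product, obtaining $\langle Q^{\st}(\Pi)[X+1],\; 1/(1-tE(y)H)\rangle$. Expanding the geometric series in $t$ and applying Lemma \ref{l-scalprodh} to each term then yields $\sum_{k=0}^{\infty}\Theta_{y,k}(Q^{\st}(\Pi)[X+1])\,t^{k}$, which is the right-hand side of the desired identity. Equating the two expressions completes the proof.

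There is no real obstacle here; the only point requiring care is the minor bookkeeping of carrying the auxiliary variable $z$ through all the preceding lemmas. One must observe that the scalar product, plethystic substitution, and Theorem \ref{t-QrL} all extend by bilinearity (respectively, by coefficient-wise action) over any commutative ring containing $\mathbb{Q}$, so every step that worked for $Q(\Pi)$ with rational coefficients still works for $Q^{\st}(\Pi)$ regarded as a symmetric function with coefficients in $\mathbb{Q}[z]$. In particular, the plethysm $Q^{\st}(\Pi)[X+1]$ is well-defined because the Adams operations act trivially on $z$ (viewing $z$ as a scalar, not as a variable to be raised to powers by $\psi_i$; alternatively, one may absorb $z$ into the ground ring $R=\mathbb{Q}[z]$ from the outset).
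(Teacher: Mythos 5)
Your proposal is correct and follows exactly the route the paper intends: the paper proves Theorem \ref{t-lpkdesst} by remarking that it "is proved in a similar way" to Theorem \ref{t-lpkdes} (a), namely by computing $\left\langle Q^{\st}(\Pi),H/(1-tE(y)H)\right\rangle$ once via Lemma \ref{l-ribexp} (b) and Theorem \ref{t-QrL}, and once via Lemma \ref{l-spconst} (with $\mathsf{m}=1$) followed by Lemma \ref{l-scalprodh}. Your care in decomposing $Q^{\st}(\Pi)=\sum_{d}z^{d}Q(\Pi_{d})$ and in treating $z$ as a scalar for the plethysm matches the paper's conventions and is the only bookkeeping the refinement requires.
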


\subsection{General formula for up-down runs}

We can also provide analogous results for the $\udr$ statistic. Given
a set of permutations $\Pi$ and a permutation statistic $\st$, define
\[
P^{\udr}(\Pi;t)\coloneqq\sum_{\pi\in\Pi}t^{\udr(\pi)}\quad\text{and}\quad P^{(\udr,\st)}(\Pi;t)\coloneqq\sum_{\pi\in\Pi}t^{\udr(\pi)}z^{\st(\pi)}.
\]

\begin{thm}
\label{t-udr} Let $\Pi\subseteq\mathfrak{S}_{n}$, with $n\ge1$, and suppose that
the quasisymmetric generating function $Q(\Pi)$ is a symmetric function
with power sum expansion $Q(\Pi)=\sum_{\lambda\vdash n}c_{\lambda}p_{\lambda}$.
Then we have
\begin{align*}
\frac{(1+t^{2})^{n}}{2(1-t)^{2}(1-t^{2})^{n-1}}P^{\udr}\left(\Pi;\frac{2t}{1+t^{2}}\right) & =\sum_{k=0}^{\infty}\Theta_{1,k}(Q(\Pi))t^{2k}+\sum_{k=0}^{\infty}\Theta_{1,k}(Q(\Pi)[X+1])t^{2k+1}\\
 & =\sum_{\substack{\lambda\vdash n\\
\textup{odd}
}
}c_{\lambda}2^{l(\lambda)}\frac{A_{l(\lambda)}(t^{2})}{(1-t^{2})^{l(\lambda)+1}}+t\sum_{\substack{\lambda\vdash n}
}c_{\lambda}\frac{B_{o(\lambda)}(t^{2})}{(1-t^{2})^{o(\lambda)+1}}\\
&=
\sum_{k=1}^n a_k 2^k \frac{A_k(t^2)}{(1-t^2)^{k+1}} + t\sum_{k=0}^n d_k \frac{B_k(t^2)}{(1-t^2)^{k+1}}
\end{align*}
with $a_k$ and $d_k$ as in Theorems \ref{t-pkdes} and \ref{t-lpkdes}.
\end{thm}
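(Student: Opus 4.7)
The plan is to imitate the proofs of Theorem \ref{t-pkdes} and Theorem \ref{t-lpkdes} by computing the scalar product
\[
\biggl\langle Q(\Pi),\frac{1+tH}{1-t^{2}EH}\biggr\rangle
\]
in three independent ways, namely via Lemma \ref{l-ribexp} (c), via a geometric-series expansion combined with Lemmas \ref{l-scalprodh} and \ref{l-spconst}, and via Lemma \ref{l-psexp} (c), and then equate the results.

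First I would take the inner product of $Q(\Pi)$ with the right-hand side of Lemma \ref{l-ribexp} (c). Since $Q(\Pi)$ is homogeneous of degree $n\ge 1$, the constant term $1/(1-t)$ pairs to zero, and by Theorem \ref{t-QrL} the sum $\sum_{L\vDash n}(2t/(1+t^{2}))^{\udr(L)}\langle Q(\Pi),r_{L}\rangle$ collapses to $\sum_{\pi\in\Pi}(2t/(1+t^{2}))^{\udr(\pi)}$. After accounting for the prefactor $(1+t^{2})^{n}/(2(1-t)^{2}(1-t^{2})^{n-1})$, this gives exactly the left-hand side of the theorem.

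Second, I would split $(1+tH)/(1-t^{2}EH)=1/(1-t^{2}EH)+tH/(1-t^{2}EH)$ and expand each summand as a geometric series in $t^{2}EH$. For the first summand, Lemma \ref{l-scalprodh} applied with the variable $\alpha$ specialized to $-1$ (so that $E(-\alpha)^{k}=E^{k}$ and $k(1-\alpha)$ becomes $2k$) yields $\langle Q(\Pi),E^{k}H^{k}\rangle=\Theta_{1,k}(Q(\Pi))$, giving $\sum_{k\ge0}\Theta_{1,k}(Q(\Pi))t^{2k}$. For the second summand, I would use Lemma \ref{l-spconst} with the monic term $\mathsf{m}=1$ (for which $H[\mathsf{m}X]=H$) to transfer the extra factor of $H$ onto $Q(\Pi)$, obtaining
\[
\langle Q(\Pi),H^{k+1}E^{k}\rangle=\langle Q(\Pi)[X+1],H^{k}E^{k}\rangle=\Theta_{1,k}(Q(\Pi)[X+1]),
\]
and summing produces $\sum_{k\ge0}\Theta_{1,k}(Q(\Pi)[X+1])t^{2k+1}$. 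Together these account for the first displayed expression on the right-hand side.

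Third, I would pair $Q(\Pi)=\sum_{\lambda\vdash n}c_{\lambda}p_{\lambda}$ with the power sum expansion in Lemma \ref{l-psexp} (c), using $\langle p_{\lambda},p_{\tau}\rangle=z_{\lambda}\delta_{\lambda,\tau}$ to collapse the double sum, which immediately yields the middle expression in the statement. The final rewriting in terms of $a_{k}$ and $d_{k}$ is then the same repackaging argument used in Theorems \ref{t-pkdes} (b) and \ref{t-lpkdes} (b): the sum over odd $\lambda$ of $c_{\lambda}w^{l(\lambda)}$ equals $\sum_{k}a_{k}w^{k}$ since this is the result of setting $p_{i}=w$ for odd $i$ and $p_{i}=0$ for even $i$ in $Q(\Pi)$, and the sum of $c_{\lambda}w^{o(\lambda)}$ equals $\sum_{k}d_{k}w^{k}$ since this corresponds to setting $p_{i}=w$ for odd $i$ and $p_{i}=1$ for even $i$. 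Since every ingredient is already established, there is no serious obstacle; the only point requiring care is the correct application of Lemma \ref{l-spconst} to produce the plethystic shift $Q(\Pi)\mapsto Q(\Pi)[X+1]$ in the $tH/(1-t^{2}EH)$ piece, and the correct specialization $\alpha=-1$ in Lemma \ref{l-scalprodh} that turns $H^{k}E(-\alpha)^{k}$ into $(EH)^{k}$.
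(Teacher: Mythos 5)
Your proposal is correct and follows essentially the same route as the paper: the paper likewise computes $\left\langle Q(\Pi),(1+tH)/(1-t^{2}EH)\right\rangle$ three ways, via Lemma \ref{l-ribexp} (c) with Theorem \ref{t-QrL}, via the split into $1/(1-t^{2}EH)+tH/(1-t^{2}EH)$ with Lemmas \ref{l-spconst} and \ref{l-scalprodh}, and via Lemma \ref{l-psexp} (c), then obtains the fourth expression by the same specialization argument as in Theorems \ref{t-pkdes} and \ref{t-lpkdes}. The two points you flag as requiring care (the $[X+1]$ shift from Lemma \ref{l-spconst} and the $\alpha=-1$ specialization) are handled exactly as you describe.
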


\begin{proof}
We derive three expressions for $\left\langle Q(\Pi),(1+tH)/(1-t^{2}EH)\right\rangle $.
First, from Lemma \ref{l-ribexp} (c) and Theorem \ref{t-QrL} we
have
\begin{align*}
\left\langle Q(\Pi),\frac{1+tH}{1-t^{2}EH}\right\rangle  & =\frac{1}{2(1-t)^{2}}\sum_{L\vDash n}\frac{(1+t^{2})^{n}}{(1-t^{2})^{n-1}}\left(\frac{2t}{1+t^{2}}\right)^{\udr(L)}\left\langle Q(\Pi),r_{L}\right\rangle \\
 & =\frac{(1+t^{2})^{n}}{2(1-t)^{2}(1-t^{2})^{n-1}}\sum_{\pi\in\Pi}\left(\frac{2t}{1+t^{2}}\right)^{\udr(\pi)}\\
 & =\frac{(1+t^{2})^{n}}{2(1-t)^{2}(1-t^{2})^{n-1}}P^{\udr}\left(\Pi;\frac{2t}{1+t^{2}}\right).
\end{align*}
Moreover, following the reasoning of the proof of Theorem \ref{t-lpkdes}
(a), we obtain
\begin{align*}
\left\langle Q(\Pi),\frac{1+tH}{1-t^{2}EH}\right\rangle  & =\left\langle Q(\Pi),\frac{1}{1-t^{2}EH}\right\rangle +\left\langle Q(\Pi),\frac{tH}{1-t^{2}EH}\right\rangle \\
 & =\left\langle Q(\Pi),\frac{1}{1-t^{2}EH}\right\rangle +\left\langle Q(\Pi)[X+1],\frac{t}{1-t^{2}EH}\right\rangle \\
 & =\sum_{k=0}^{\infty}\left\langle Q(\Pi),E^{k}H^{k}\right\rangle t^{2k}+\sum_{k=0}^{\infty}\left\langle Q(\Pi)[X+1],E^{k}H^{k}\right\rangle t^{2k+1}\\
 & =\sum_{k=0}^{\infty}\Theta_{1,k}(Q(\Pi))t^{2k}+\sum_{k=0}^{\infty}\Theta_{1,k}(Q(\Pi)[X+1])t^{2k+1}.
\end{align*}
Finally, Lemma \ref{l-psexp} (c) implies {\allowdisplaybreaks
\begin{align*}
\left\langle Q(\Pi),\frac{1+tH}{1-t^{2}EH}\right\rangle  & =\left\langle \sum_{\lambda\vdash n}c_{\lambda}p_{\lambda},\sum_{\substack{\tau\text{ odd}}
}\frac{p_{\tau}}{z_{\tau}}2^{l(\tau)}\frac{A_{l(\tau)}(t^{2})}{(1-t^{2})^{l(\tau)+1}}\right\rangle \\
 & \qquad\qquad\qquad\qquad\qquad+\left\langle \sum_{\lambda\vdash n}c_{\lambda}p_{\lambda},t\sum_{\tau}\frac{p_{\tau}}{z_{\tau}}\frac{B_{o(\tau)}(t^{2})}{(1-t^{2})^{o(\tau)+1}}\right\rangle \\
 & =\sum_{\lambda\vdash n}\sum_{\substack{\tau\text{ odd}}
}\frac{c_{\lambda}}{z_{\tau}}\left\langle p_{\lambda},p_{\tau}\right\rangle 2^{l(\tau)}\frac{A_{l(\tau)}(t^{2})}{(1-t^{2})^{l(\tau)+1}}+t\sum_{\substack{\lambda\vdash n\\
\tau
}
}\frac{c_{\lambda}}{z_{\mu}}\left\langle p_{\lambda},p_{\tau}\right\rangle \frac{B_{o(\tau)}(t^{2})}{(1-t^{2})^{o(\tau)+1}}\\
 & =\sum_{\substack{\lambda\vdash n\\
\textup{odd}
}
}c_{\lambda}2^{l(\lambda)}\frac{A_{l(\lambda)}(t^{2})}{(1-t^{2})^{l(\lambda)+1}}+t\sum_{\substack{\lambda\vdash n}
}c_{\lambda}\frac{B_{o(\lambda)}(t^{2})}{(1-t^{2})^{o(\lambda)+1}}.
\end{align*}
} The fourth expression follows from the third as in Theorems \ref{t-pkdes} and \ref{t-lpkdes}.
\end{proof}
The proof of the next theorem is similar.
\begin{thm}
\label{t-udrst} Let $\Pi\subseteq\mathfrak{S}_{n}$ and let $\st$
be a permutation statistic. Suppose that $Q^{\st}(\Pi)$ is a symmetric
function \textup{(}in the variables $x_{1},x_{2},\dots$\textup{)}.
Then
\begin{multline*}
\qquad
\frac{(1+t^{2})^{n}}{2(1-t)^{2}(1-t^{2})^{n-1}}P^{(\udr,\st)}\left(\Pi;\frac{2t}{1+t^{2}},z\right)\\
=\sum_{k=0}^{\infty}\Theta_{1,k}(Q^{\st}(\Pi))t^{2k}+\sum_{k=0}^{\infty}\Theta_{1,k}(Q^{\st}(\Pi)[X+1])t^{2k+1}.
\qquad
\end{multline*}
\end{thm}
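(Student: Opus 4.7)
The plan is to mimic the proof of Theorem \ref{t-udr} verbatim, but carrying along the extra variable $z$ that tracks $\st$. All the preparatory machinery already tolerates this: the scalar product and plethystic substitution were extended in Section 2 to symmetric functions whose coefficients lie in rings of formal power series in auxiliary variables, and Lemmas \ref{l-ribexp}, \ref{l-scalprodh}, and \ref{l-spconst} are each linear or bilinear in the symmetric-function arguments on which the $z$-dependence sits, so no new identity is needed.

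The key object to evaluate in two ways is the scalar product
\[
\bigl\langle Q^{\st}(\Pi),\ (1+tH)/(1-t^{2}EH)\bigr\rangle.
\]
For the ``ribbon'' evaluation, I would expand $(1+tH)/(1-t^{2}EH)$ using Lemma \ref{l-ribexp} (c) as a sum over compositions $L\vDash n$ weighted by $r_{L}$ and powers of $2t/(1+t^{2})$ indexed by $\udr(L)$. Because $Q^{\st}(\Pi)$ is assumed symmetric, the refined version of Theorem \ref{t-QrL} gives $\langle Q^{\st}(\Pi),r_{L}\rangle=\sum_{\pi\in\Pi,\,\Comp(\pi)=L}z^{\st(\pi)}$, so the sum collapses and yields exactly the left-hand side
$\frac{(1+t^{2})^{n}}{2(1-t)^{2}(1-t^{2})^{n-1}}P^{(\udr,\st)}\bigl(\Pi;\tfrac{2t}{1+t^{2}},z\bigr)$.

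For the ``$\Theta_{1,k}$'' evaluation, I would split $(1+tH)/(1-t^{2}EH)=1/(1-t^{2}EH)+tH/(1-t^{2}EH)$. The first summand expands as a geometric series $\sum_{k\ge0}t^{2k}E^{k}H^{k}$, and Lemma \ref{l-scalprodh} with $\alpha=-1$ identifies $\langle Q^{\st}(\Pi),E^{k}H^{k}\rangle=\Theta_{1,k}(Q^{\st}(\Pi))$, producing $\sum_{k\ge0}\Theta_{1,k}(Q^{\st}(\Pi))t^{2k}$. For the second summand, apply Lemma \ref{l-spconst} with $\mathsf{m}=1$ to move the leading $H$ across the scalar product as a plethystic substitution $[X+1]$, then expand the geometric series again and apply Lemma \ref{l-scalprodh}, yielding $\sum_{k\ge0}\Theta_{1,k}(Q^{\st}(\Pi)[X+1])t^{2k+1}$. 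Equating the two evaluations proves the theorem.

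I do not anticipate a genuine obstacle: the argument is purely a transcription of the proof of Theorem \ref{t-udr} with the $z$-dependence passively tracked. The only thing worth checking is that each invocation of Lemma \ref{l-spconst} and Lemma \ref{l-scalprodh} is valid when $Q^{\st}(\Pi)$ has coefficients in $\mathbb{Q}[z]$ rather than in $\mathbb{Q}$, but this is exactly the extension of the scalar product and plethysm explicitly allowed in Section 2.
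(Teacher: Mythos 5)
Your proposal is correct and follows exactly the route the paper intends: the paper omits the proof of Theorem \ref{t-udrst}, stating only that it is ``similar'' to that of Theorem \ref{t-udr}, and your argument is precisely that proof with the $z$-dependence carried through $Q^{\st}(\Pi)$ via the bilinearity of the scalar product and Lemmas \ref{l-ribexp}(c), \ref{l-spconst}, and \ref{l-scalprodh}. No gaps.
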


\section{Cyclic permutations}

We say that a permutation of length $n$ is a \textit{cyclic permutation}
(or a \textit{cycle}) if it has cycle type $(n)$. We denote the set
of cyclic permutations of length $n$ by $\mathfrak{C}_{n}$. 

Recall that the Lyndon symmetric function 
\begin{equation}
L_{n}=\frac{1}{n}\sum_{d\mid n}\mu(d)p_{d}^{n/d}\label{e-lyndonp}
\end{equation}
is the quasisymmetric generating function for $\mathfrak{C}_{n}$.

\subsection{Counting cyclic permutations by peaks and descents}

Define
\[
C_{n}^{(\pk,\des)}(y,t)\coloneqq P^{(\pk,\des)}(\mathfrak{C}_{n};y,t)=\sum_{\pi\in\mathfrak{C}_{n}}y^{\pk(\pi)+1}t^{\des(\pi)+1},
\]
\[
C_{n}^{\pk}(t)\coloneqq P^{\pk}(\mathfrak{C}_{n};t)=\sum_{\pi\in\mathfrak{C}_{n}}t^{\pk(\pi)+1},\quad\text{and}\quad C_{n}(t)\coloneqq A(\mathfrak{C}_{n};t)=\sum_{\pi\in\mathfrak{C}_{n}}t^{\des(\pi)+1}.
\]
Our first result from this section is a formula for the polynomials
$C_{n}^{(\pk,\des)}(y,t)$ in terms of Eulerian polynomials. 
\begin{thm}
\label{t-cycpkdes} Let $n\geq1$. Then
\begin{multline*}
\qquad
C_{n}^{(\pk,\des)}\left(\frac{(1+y)^{2}t}{(y+t)(1+yt)},\frac{y+t}{1+yt}\right)\\
=\frac{1+y}{n(1+yt)^{n+1}}\sum_{d\mid n}\mu(d)(1-(-y)^{d})^{n/d}(1-t)^{n-n/d}A_{n/d}(t).
\qquad
\end{multline*}
\end{thm}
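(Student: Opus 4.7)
The plan is to apply Theorem \ref{t-pkdes} (a) directly with $\Pi = \mathfrak{C}_n$, since the Gessel--Reutenauer result (\ref{e-lyndonp}) tells us that $Q(\mathfrak{C}_n) = L_n$ is a symmetric function whose power sum expansion we know explicitly. The key observation is that only very special partitions appear in $L_n$: from
\[
L_n = \frac{1}{n}\sum_{d\mid n}\mu(d)\, p_d^{n/d},
\]
the only partitions $\lambda \vdash n$ with $c_\lambda \ne 0$ are those of the form $\lambda = (d^{n/d})$ for some divisor $d$ of $n$, and for such a $\lambda$ we have $c_\lambda = \mu(d)/n$, $l(\lambda) = n/d$, and
\[
\prod_{k=1}^{l(\lambda)}\bigl(1-(-y)^{\lambda_k}\bigr) = \bigl(1-(-y)^d\bigr)^{n/d}.
\]

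First, I would substitute these data into the right-hand side of Theorem \ref{t-pkdes} (a), obtaining
\[
\sum_{d\mid n} \frac{\mu(d)}{n}\cdot \frac{A_{n/d}(t)}{(1-t)^{n/d+1}}\,\bigl(1-(-y)^d\bigr)^{n/d}
\]
as the value of $\sum_{k\ge0}\Theta_{y,k}(L_n)\,t^k$. Second, I would equate this with the left-hand side of Theorem \ref{t-pkdes} (a), which in the cyclic case reads
\[
\frac{1}{1+y}\left(\frac{1+yt}{1-t}\right)^{n+1} C_n^{(\pk,\des)}\!\left(\frac{(1+y)^{2}t}{(y+t)(1+yt)},\,\frac{y+t}{1+yt}\right).
\]
Solving for $C_n^{(\pk,\des)}$ yields
\[
C_n^{(\pk,\des)}\!\left(\cdots,\cdots\right) = \frac{1+y}{n}\cdot \frac{(1-t)^{n+1}}{(1+yt)^{n+1}}\sum_{d\mid n} \mu(d)\bigl(1-(-y)^d\bigr)^{n/d}\frac{A_{n/d}(t)}{(1-t)^{n/d+1}},
\]
and collecting the powers of $1-t$ (namely $(n+1) - (n/d+1) = n - n/d$) gives exactly the claimed formula.

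So the entire proof is a direct specialization of Theorem \ref{t-pkdes} (a) plus the Gessel--Reutenauer identification of $L_n$ as $Q(\mathfrak{C}_n)$. There is no real obstacle here: the only thing one must be a bit careful about is bookkeeping of the exponents of $(1-t)$ and the placement of the factor $1/n$ coming out of $L_n$. In particular, no new plethystic lemmas or combinatorial identities are needed beyond what has already been established in the excerpt.
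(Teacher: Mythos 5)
Your proposal is correct and is exactly the paper's argument: the paper's proof simply states that the result follows immediately from Theorem \ref{t-pkdes} (a) and Equation (\ref{e-lyndonp}), and you have correctly filled in the routine details (the partitions $(d^{n/d})$ with coefficients $\mu(d)/n$, and the exponent bookkeeping $(n+1)-(n/d+1)=n-n/d$).
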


\begin{proof}
The result follows immediately from Theorem \ref{t-pkdes} (a) and
Equation (\ref{e-lyndonp}).
\end{proof}
Although this formula may seem complicated, it allows for easy computation
of the polynomials $C_{n}^{(\pk,\des)}(y,t)$. By inverting this formula (cf.\ the discussion after Lemma \ref{l-ribexp}), we obtain
\begin{align*}
C_{n}^{(\pk,\des)}(y,t) & =\frac{1+u}{n(1+uv)^{n+1}}\sum_{d\mid n}\mu(d)(1-(-u)^{d})^{n/d}(1-v)^{n-n/d}A_{n/d}(v)
\end{align*}
where 
\[
u=\frac{1+t^{2}-2yt-(1-t)\sqrt{(1+t)^{2}-4yt}}{2(1-y)t}
\]
and 
\[
v=\frac{(1+t)^{2}-2yt-(1+t)\sqrt{(1+t)^{2}-4yt}}{2yt}.
\] For example, with
$n=7$, we have 
\begin{align*}
C_{7}^{(\pk,\des)}(y,t) & =\frac{(1+u)}{7(1+uv)^{8}}((1+u)^{7}A_{7}(v)-(1+u^{7})(1-v)^{6}A_{1}(v))\\
 & =\frac{(1+u)}{7(1+uv)^{8}}((1+u)^{7}(v+120v^{2}+1191v^{3}+2416v^{4}+1191v^{5}\\[-8pt]
 & \hspace{2in}+120v^{6}+v^{7})-(1+u^{7})(1-v)^{6}v)\\[2pt]
 & =(y+17y^{2})t^{2}+(2y+64y^{2}+102y^{3})t^{3}+(3y+99y^{2}+207y^{3}+39y^{4})t^{4}\\
 & \qquad\qquad\qquad\qquad\qquad\qquad\qquad\quad+(2y+64y^{2}+102y^{3})t^{5}+(y+17y^{2})t^{6},
\end{align*}
where the last equality was obtained by substituting in the above expressions for $u$ and $v$, and then simplifying using Maple.

Observe that in $C_{7}^{(\pk,\des)}(y,t)$,
the coefficient of $t^{k+1}$ is equal to the coefficient of $t^{n-k}$
for each $k$, indicating that the number of cyclic permutations of
length $7$ with $j$ peaks and $k$ descents is equal to the number
of cyclic permutations of length $7$ with $j$ peaks and $n-1-k$
descents. In fact, this is true for any $n\in\mathbb{P}$ not congruent
to 2 modulo 4. This is not readily apparent from Theorem \ref{t-cycpkdes}, but is a special case of the following result.
\begin{thm}
\label{t-compsym1} Suppose that $\lambda$ is a partition of $n$ with no parts congruent to $2$ modulo $4$ and that every odd part of $\lambda$ occurs only once. Then the number of permutations of cycle type $\lambda$ with $j$ peaks and $k$ descents is equal to the number of permutations of cycle type $\lambda$ with $j$ peaks and $n-1-k$ descents.
\end{thm}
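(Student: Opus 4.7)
The plan is to apply Theorem \ref{t-pkdes}(a) with $\Pi$ the set of permutations of cycle type $\lambda$, whose quasisymmetric generating function is the Lyndon symmetric function $L_\lambda = \prod_i h_{m_i}[L_i]$ (where $m_i$ is the multiplicity of $i$ in $\lambda$). The key lemma I would first establish is that, under the hypotheses on $\lambda$, the symmetric function $L_\lambda$ is invariant under the classical involution $\omega \colon \Lambda \to \Lambda$ sending $p_k \mapsto (-1)^{k-1} p_k$; equivalently, in the power sum expansion $L_\lambda = \sum_\tau c_\tau p_\tau$, every partition $\tau$ with $c_\tau \neq 0$ satisfies $l(\tau) \equiv n \pmod{2}$, since $\omega$ acts on $p_\tau$ by the sign $(-1)^{n - l(\tau)}$.

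I would prove the $\omega$-invariance factor by factor. For $i$ odd with $m_i = 1$, applying $\omega$ to $L_i = \frac{1}{i}\sum_{d \mid i}\mu(d)p_d^{i/d}$ produces the sign $(-1)^{(d-1)i/d}$ on the $d$-th term, and this is $+1$ because $d$ is a squarefree divisor of the odd integer $i$, so $d-1$ is even. For $i \equiv 0 \pmod 4$ and any $s \geq 1$, the same calculation applied to $p_s[L_i] = \frac{1}{i}\sum_{d \mid i}\mu(d)p_{ds}^{i/d}$ yields the sign $(-1)^{(ds-1)(i/d)}$; a short case split on $d$ (odd; $d=2$; $d=2e$ with $e$ odd) shows this is always $+1$, using $4 \mid i$ precisely to guarantee that $i/d$ is even whenever $d$ is even. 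Hence $\omega$ fixes each $p_{\sigma_j}[L_i]$ and so also fixes $h_m[L_i] = \sum_{\sigma \vdash m}\frac{1}{z_\sigma}\prod_j p_{\sigma_j}[L_i]$ for every $m$. Under the hypotheses of the theorem, every part of $\lambda$ is either odd with multiplicity one or a multiple of $4$, so every factor of $L_\lambda$ falls into one of these two cases, and $\omega(L_\lambda) = L_\lambda$ follows.

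With the lemma in hand, I would apply Theorem \ref{t-pkdes}(a) to the expansion $L_\lambda = \sum_\tau c_\tau p_\tau$. On the right-hand side, the palindromicity $A_m(t) = t^{m+1}A_m(1/t)$ of the Eulerian polynomials (for $m \geq 1$) implies that each summand $\frac{A_{l(\tau)}(t)}{(1-t)^{l(\tau)+1}}\prod_k(1-(-y)^{\tau_k})$ is multiplied by $(-1)^{l(\tau)+1}$ under $t \mapsto 1/t$, while the factor in $y$ is unchanged. Since the lemma forces $l(\tau) \equiv n \pmod 2$ uniformly, every such sign equals $(-1)^{n+1}$, so the full right-hand side is multiplied by $(-1)^{n+1}$ under $t \mapsto 1/t$.

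Converting this identity back through the left-hand side of Theorem \ref{t-pkdes}(a): writing $Y$ and $T$ for the arguments $(1+y)^{2}t/((y+t)(1+yt))$ and $(y+t)/(1+yt)$ appearing there, one checks directly that $Y(y,1/t) = Y(y,t)$ and $T(y,1/t) = 1/T(y,t)$, and the equation reduces to $P^{(\pk,\des)}(\Pi;y,t) = t^{n+1}P^{(\pk,\des)}(\Pi;y,1/t)$ as an identity of polynomials (this uses that the change of variables $(y,t) \mapsto (Y,T)$ is birational, as recorded after Lemma \ref{l-ribexp}). Matching coefficients of $y^{j+1}t^{k+1}$ yields the asserted equidistribution of $(\pk,\des) = (j,k)$ with $(\pk,\des) = (j,n-1-k)$. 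The main obstacle is the parity lemma for $L_\lambda$: the computation itself is elementary, but one must handle $h_m[L_i]$ in addition to $L_i$, and it is worth confirming that both hypotheses on $\lambda$ are sharp—a part $\equiv 2 \pmod 4$ already produces a wrong-parity term in $L_i$ via the divisor $d=2$, while an odd part of multiplicity $\geq 2$ introduces one via the $p_2$ summand of $h_m$.
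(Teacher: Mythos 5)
Your proposal is correct, but it takes a genuinely different route from the paper. The paper's proof is combinatorial: it invokes Lemma \ref{l-compsym1} (Gessel--Reutenauer's descent-set complementation theorem for these cycle types) as a black box, and combines it with two elementary observations---that conjugation by the decreasing permutation preserves cycle type and descent number while exchanging peaks and valleys, and that complementing the descent set sends $(\val,\des)=(j,k)$ to $(\pk,\des)=(j,n-1-k)$. Your argument instead works entirely inside the symmetric-function machinery: you prove the $\omega$-invariance of $L_{\lambda}$ directly by the divisor-sign computation on $p_{s}[L_{i}]$ (your case analysis is correct, including the role of $4\mid i$ in forcing $i/d$ even when $d$ is even, and your sharpness remarks check out), deduce that every contributing $p_{\tau}$ has $l(\tau)\equiv n\pmod 2$, and then push the palindromicity $A_{m}(t)=t^{m+1}A_{m}(1/t)$ through the right-hand side of Theorem \ref{t-pkdes}(a); the bookkeeping on the left-hand side under $t\mapsto 1/t$ (namely $Y\mapsto Y$, $T\mapsto 1/T$, and the prefactor picking up $(-1)^{n+1}(y+t)^{n+1}/(1+yt)^{n+1}$) is also right, and the birationality of $(y,t)\mapsto(Y,T)$ legitimately upgrades the identity to $P^{(\pk,\des)}(\Pi;y,t)=t^{n+1}P^{(\pk,\des)}(\Pi;y,1/t)$. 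Note that your $\omega$-invariance lemma is essentially the content underlying Gessel--Reutenauer's Theorem 4.1, so the two proofs rest on the same fact; what yours buys is self-containedness (you re-derive the needed input rather than citing it) and a demonstration that the paper's own plethystic formulas already encode the symmetry, at the cost of being longer and less transparent than the paper's three-line bijective argument.
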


\begin{cor}
If $n$ is not congruent to $2$ modulo $4$, then the number of cyclic permutations of length $n$ with $j$ peaks and $k$ descents is equal to the number of cyclic permutations of length $n$ with $j$ peaks and $n-1-k$ descents.
\end{cor}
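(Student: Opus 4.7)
The plan is to derive this corollary as an immediate specialization of Theorem \ref{t-compsym1} to the cycle type $\lambda = (n)$, which is by definition the cycle type of every cyclic permutation of length $n$. Since $\lambda$ has only the single part $n$, I need only verify that this partition meets the two hypotheses of Theorem \ref{t-compsym1} under the assumption $n \not\equiv 2 \pmod{4}$.

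For the condition that no part of $\lambda$ is congruent to $2$ modulo $4$: the sole part is $n$ itself, so this condition is precisely the assumption $n \not\equiv 2 \pmod{4}$. For the condition that every odd part occurs only once: if $n$ is odd, then $\lambda = (n)$ has exactly one odd part with multiplicity $1$, so the condition holds; if $n$ is even (and not $\equiv 2 \pmod 4$, i.e., divisible by $4$), there are no odd parts at all and the condition is vacuously satisfied.

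Once both hypotheses are checked, applying Theorem \ref{t-compsym1} with $\lambda = (n)$ gives that the number of permutations of cycle type $(n)$, namely elements of $\mathfrak{C}_n$, with $j$ peaks and $k$ descents equals the number with $j$ peaks and $n-1-k$ descents, which is the desired symmetry. There is essentially no obstacle beyond the case check; the real work lies in Theorem \ref{t-compsym1} itself, not in this corollary. Equivalently, one could read off the result from Theorem \ref{t-cycpkdes} by noting that each summand $\mu(d)(1-(-y)^d)^{n/d}(1-t)^{n-n/d}A_{n/d}(t)/(1+yt)^{n+1}$ displays the required palindromicity in $t$ about $t^{(n-1)/2}$ (after the substitution $t \mapsto (y+t)/(1+yt)$) precisely when the parity conditions on $d$ and $n/d$ do not conspire to break symmetry, but the cleanest route is simply to invoke Theorem \ref{t-compsym1}.
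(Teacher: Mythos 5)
Your proof is correct and matches the paper's intent exactly: the corollary is stated as an immediate consequence of Theorem \ref{t-compsym1} applied to the single-part partition $\lambda=(n)$, and your verification of the two hypotheses (the sole part $n$ is not $\equiv 2 \pmod 4$ by assumption, and the odd-multiplicity condition is trivially or vacuously satisfied) is precisely the intended case check. No further comment is needed.
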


To prove Theorem \ref{t-compsym1}, we need the following lemma. Given a descent set $D\subseteq[n-1]$, the \textit{complementary descent set} $D^{c}$ of $D$ is defined by $D^{c}\coloneqq[n-1]\backslash D$.

\begin{lem}
\label{l-compsym1}
Suppose that $\lambda$ is a partition of $n$ with no parts congruent to $2$ modulo $4$ and that every odd part of $\lambda$ occurs only once. Then the number of permutations with cycle type $\lambda$ and descent set $D\subseteq[n-1]$ is equal to the number of permutations with cycle type $\lambda$ and the complementary descent set $D^{c}$.
\end{lem}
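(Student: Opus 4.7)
\emph{Proof plan.} The approach is to translate the statement into a symmetric-function identity and then exploit the standard involution $\omega\colon\Lambda\to\Lambda$ defined by $\omega(p_k)=(-1)^{k-1}p_k$ (equivalently $\omega(h_n)=e_n$, $\omega(s_\lambda)=s_{\lambda'}$). For $D\subseteq[n-1]$, let $L$ be the composition of $n$ with $\Des(L)=D$ and $L^c$ the one with $\Des(L^c)=[n-1]\setminus D$. The corollary just before the lemma expresses the two counts as $\langle L_\lambda,r_L\rangle$ and $\langle L_\lambda,r_{L^c}\rangle$. A standard ribbon computation---the transpose of a ribbon is a ribbon whose composition has the complementary descent set up to a reversal, and $r_L$ is insensitive to reversal because skew Schur functions are invariant under $180^\circ$ rotation---gives $\omega(r_L)=r_{L^c}$. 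Since $\omega$ is self-adjoint, the lemma reduces to showing $\omega(L_\lambda)=L_\lambda$.

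First I would verify that $\omega(L_n)=L_n$ whenever $n\not\equiv 2\pmod 4$. From \eqref{e-lyndonp} and $\omega(p_k)=(-1)^{k-1}p_k$ we have
\[
\omega(L_n)=\frac{1}{n}\sum_{d\mid n}\mu(d)(-1)^{(d-1)(n/d)}p_d^{n/d},
\]
and a short parity analysis on squarefree $d$ shows that the sign $(-1)^{(d-1)(n/d)}$ equals $+1$ in every surviving term except when $n\equiv 2\pmod 4$ and $d=2$, so $\omega(L_n)=L_n$ in the claimed range.

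Next I would invoke the compatibility of $\omega$ with plethysm: for homogeneous $g\in\Lambda$ of degree $d$,
\[
\omega(h_m[g])=\begin{cases} h_m[\omega(g)], & d\text{ even,}\\ e_m[\omega(g)], & d\text{ odd,}\end{cases}
\]
which follows in a few lines from $\omega(p_n)=(-1)^{n-1}p_n$, the ring-homomorphism property of $\omega$, and $p_n[cg]=c\,p_n[g]$ for $c\in\mathbb{Q}$. Applying this factor by factor to $L_\lambda=\prod_i h_{m_i}[L_i]$: for $i\equiv 0\pmod 4$ the symmetric function $L_i$ has even degree and is itself $\omega$-fixed, so $\omega(h_{m_i}[L_i])=h_{m_i}[L_i]$ for every $m_i$; for $i$ odd, the hypothesis forces $m_i\le 1$, so the factor is either $1$ or $L_i$, both $\omega$-fixed by the previous step; and for $i\equiv 2\pmod 4$, the hypothesis forces $m_i=0$. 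Multiplying, $\omega(L_\lambda)=L_\lambda$.

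The main obstacle is the plethysm--$\omega$ compatibility rule, because $\omega(f[g])$ does \emph{not} equal $\omega(f)[\omega(g)]$ in general---the parity of $\deg g$ genuinely intervenes, and it is exactly this parity dichotomy that forces both hypotheses on $\lambda$: the ``no parts $\equiv 2\pmod 4$'' condition is needed so that $\omega(L_i)=L_i$ for every even part size that appears, while the ``odd parts simple'' condition is needed to avoid the identity $e_m[L_i]=h_m[L_i]$, which fails for $m\ge 2$ and $i$ odd. A minor secondary point is the careful bookkeeping of reversal and transposition conventions in the identification $\omega(r_L)=r_{L^c}$.
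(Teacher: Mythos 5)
Your argument is correct: the paper itself does not prove this lemma but cites it as Theorem 4.1 of Gessel--Reutenauer, and your proof is essentially the argument given there --- reduce via $\omega(r_L)=r_{L^c}$ and self-adjointness of $\omega$ to showing $\omega(L_\lambda)=L_\lambda$, then use $\omega(L_n)=L_n$ for $n\not\equiv 2\pmod 4$ together with the parity-sensitive rule $\omega(h_m[g])=h_m[\omega(g)]$ or $e_m[\omega(g)]$ according to whether $\deg g$ is even or odd. (One immaterial slip: for $n\equiv 2\pmod 4$ the sign $(-1)^{(d-1)n/d}$ is $-1$ for \emph{every} even squarefree divisor $d$, not only $d=2$; this does not affect the conclusion in the range you need.)
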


Lemma \ref{l-compsym1} is due to Gessel and Reutenauer \cite[Theorem 4.1]{Gessel1993}; a bijective proof was later given by Steinhardt \cite{Steinhardt2010}.

\begin{proof}[Proof of Theorem \ref{t-compsym1}]
Given $\pi\in\mathfrak{S}_{n}$, we say that $i\in\{2,\dots,n\}$ is a \textit{valley} of $\pi$ if $\pi(i-1)>\pi(i)<\pi(i+1)$. It is not difficult to verify that conjugation by the decreasing permutation $n(n-1)\cdots1$ (i.e., the reverse-complement operation) preserves both the descent number and cycle type but toggles between peaks and valleys. In other words, the number of permutations of cycle type $\lambda$ with $j$ peaks and $k$ descents is equal to the number of permutations of cycle type $\lambda$ with $j$ valleys and $k$ descents. The descent number, peak number, and valley number are all descent statistics, and it is easy to see that a permutation with descent set $D$ has $j$ valleys and $k$ descents if and only if a permutation with the complementary descent set $D^{c}$ has $j$ peaks and $n-1-k$ descents. Thus, if $\lambda$ satisfies the conditions of Lemma \ref{l-compsym1}, then the number of permutations of cycle type $\lambda$ with $j$ peaks and $k$ descents is equal to the number of permutations of cycle type $\lambda$ with $j$ peaks and $n-1-k$ descents.
\end{proof}

Let us return to Theorem \ref{t-cycpkdes}. Setting  $y=1$ and $y=0$ yields simpler formulas for counting cyclic permutations by the peak number and by the descent number, respectively. Note that the latter result is the $q=1$ evaluation of \cite[Corollary 6.2]{Gessel1993}.
\begin{cor}\label{c-cycpk}
Let $n\geq1$. Then
\leqnomode
\begin{gather*}\tag{a}
C_{n}^{\pk}\left(\frac{4t}{(1+t)^{2}}\right)=\frac{1}{n(1+t)^{n+1}}
  \sum_{\substack{d\mid n\\d\textup{ odd}}} \mu(d)2^{n/d+1}(1-t)^{n-n/d}A_{n/d}(t)\\
\shortintertext{and}\\[-5pt]
\tag{b}
C_{n}(t)=\frac{1}{n}\sum_{d\mid n}\mu(d)(1-t)^{n-n/d}A_{n/d}(t).
\end{gather*}
\end{cor}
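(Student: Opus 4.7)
The plan is that both parts of Corollary \ref{c-cycpk} are direct specializations of Theorem \ref{t-cycpkdes}, obtained by setting $y=1$ and $y=0$ respectively. There is no real obstacle; the only work is verifying that the left-hand arguments collapse in the right way and that the right-hand sum simplifies as claimed.

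For part (a), I would set $y=1$ in Theorem \ref{t-cycpkdes}. On the left, the second argument $(y+t)/(1+yt)$ becomes $1$, so the $t^{\des+1}$ factor inside $C_n^{(\pk,\des)}$ just contributes $1$; the first argument becomes $(1+1)^2 t/((1+t)(1+t)) = 4t/(1+t)^2$. Since $C_n^{(\pk,\des)}(y,1) = \sum_{\pi\in\mathfrak{C}_n} y^{\pk(\pi)+1} = C_n^{\pk}(y)$, the left side collapses to $C_n^{\pk}(4t/(1+t)^2)$. On the right, the prefactor $(1+y)/(1+yt)^{n+1}$ becomes $2/(1+t)^{n+1}$, and the factor $(1-(-y)^d)^{n/d}$ becomes $(1-(-1)^d)^{n/d}$, which vanishes when $d$ is even and equals $2^{n/d}$ when $d$ is odd. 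Pulling the extra $2$ from the prefactor into the sum gives $2^{n/d+1}$, yielding exactly the formula in (a).

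For part (b), I would set $y=0$ in Theorem \ref{t-cycpkdes}. The first argument becomes $(1)^2 t/((t)(1)) = 1$ and the second argument becomes $t/1 = t$, so the left side collapses to $\sum_{\pi \in \mathfrak{C}_n} 1 \cdot t^{\des(\pi)+1} = C_n(t)$. On the right, the prefactor $(1+y)/(1+yt)^{n+1}$ becomes $1$ and the factor $(1-(-y)^d)^{n/d}$ becomes $1$, so we are left with $\frac{1}{n}\sum_{d\mid n} \mu(d)(1-t)^{n-n/d}A_{n/d}(t)$, which is exactly the formula in (b).

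The only thing that needs a brief justification is the substitution into the argument of $C_n^{(\pk,\des)}$: one should note that even though the map $y \mapsto ((1+y)^2 t/((y+t)(1+yt)), (y+t)/(1+yt))$ is defined as a rational change of variables, the resulting identity between the specialized left and right sides is an identity of polynomials in $t$ (and in $y$ before specialization), so setting $y=1$ and $y=0$ is legitimate. No further computation is required.
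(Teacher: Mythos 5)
Your proposal is correct and matches the paper's own derivation: the paper obtains Corollary \ref{c-cycpk} precisely by setting $y=1$ and $y=0$ in Theorem \ref{t-cycpkdes}, and your verification of how the arguments of $C_n^{(\pk,\des)}$ collapse and how the factor $(1-(-y)^d)^{n/d}$ simplifies in each case is exactly the intended computation.
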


Now, consider the polynomial
\[
P_{n}^{\pk}(t)\coloneqq P^{\pk}(\mathfrak{S}_{n};t)=\sum_{\pi\in\mathfrak{S}_{n}}t^{\pk(\pi)+1}.
\]
giving the distribution of the peak number over all of $\mathfrak{S}_n$. Just as Corollary \ref{c-cycpk} (b) allows one to express the polynomials $C_{n}(t)$ counting cyclic permutations by descents in terms of the polynomials $A_{n}(t)$ counting all permutations by descents, we can use Corollary \ref{c-cycpk} (a) to express $C_{n}^{\pk}(t)$ in terms of the $P_{n}^{\pk}(t)$.
\begin{cor}
\label{c-CpkdesPpkdes}Let $n\geq1$. Then 

\[
C_{n}^{\pk}(t)=\frac{1}{n}\sum_{\substack{d\mid n\\
d\textup{ odd}
}
}\mu(d)(1-t)^{(n-n/d)/2}P_{n/d}^{\pk}(t).
\]
\end{cor}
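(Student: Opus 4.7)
The plan is to derive Corollary \ref{c-CpkdesPpkdes} from Corollary \ref{c-cycpk} (a) by converting each Eulerian polynomial $A_{n/d}(t)$ appearing on the right-hand side into the peak generating polynomial $P_{n/d}^{\pk}$ via a specialization of Zhuang's formula \eqref{e-apkdes}. Setting $y=1$ in \eqref{e-apkdes} causes the arguments of $P^{(\pk,\des)}$ to collapse to $\frac{4t}{(1+t)^2}$ and $1$, and since $P_m^{(\pk,\des)}(s,1) = P_m^{\pk}(s)$, this specialization gives the identity
\[
A_m(t) = \left(\frac{1+t}{2}\right)^{m+1} P_m^{\pk}\!\!\left(\frac{4t}{(1+t)^2}\right)
\]
for every $m\ge1$.

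Substituting this expression for $A_{n/d}(t)$ into the right-hand side of Corollary \ref{c-cycpk} (a) and collecting the factors of $2$ and $(1+t)$ cancels cleanly: the $2^{n/d+1}$ in the numerator cancels against the $2^{n/d+1}$ produced by $\left(\frac{1+t}{2}\right)^{n/d+1}$, while the surviving $(1+t)^{n/d+1}$ combines with the prefactor $(1+t)^{-(n+1)}$ to yield $(1+t)^{-(n-n/d)}$. After these cancellations I expect to obtain
\[
C_n^{\pk}\!\!\left(\frac{4t}{(1+t)^2}\right) = \frac{1}{n}\sum_{\substack{d\mid n\\ d\text{ odd}}}\mu(d)\left(\frac{1-t}{1+t}\right)^{n-n/d} P_{n/d}^{\pk}\!\!\left(\frac{4t}{(1+t)^2}\right).
\]

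The next step is to rewrite each factor $\left(\frac{1-t}{1+t}\right)^{n-n/d}$ in terms of $s \coloneqq \frac{4t}{(1+t)^2}$. Two observations make this work: first, the algebraic identity $\left(\frac{1-t}{1+t}\right)^{2} = 1 - \frac{4t}{(1+t)^2} = 1 - s$; and second, the parity check that when $d$ is an odd divisor of $n$, the integer $n - n/d = (n/d)(d-1)$ is even, so the exponent $(n-n/d)/2$ is a nonnegative integer and we may write $\left(\frac{1-t}{1+t}\right)^{n-n/d} = (1-s)^{(n-n/d)/2}$. This parity check is the main piece of bookkeeping; once it is in place there is essentially no remaining obstacle.

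Finally, after substitution the identity reads $C_n^{\pk}(s) = \frac{1}{n}\sum_{d\mid n,\ d\text{ odd}}\mu(d)(1-s)^{(n-n/d)/2}P_{n/d}^{\pk}(s)$, with both sides polynomials in $s$. Since the substitution $s = 4t/(1+t)^2$ is nonconstant, the polynomial identity in $s$ is equivalent to the identity we derived, and renaming $s$ as $t$ completes the proof.
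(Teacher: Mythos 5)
Your proposal is correct and follows essentially the same route as the paper: combine Corollary \ref{c-cycpk} (a) with the Stembridge identity \eqref{e-Apk} (which you rederive from the $y=1$ case of \eqref{e-apkdes}, while the paper cites it directly), then invert the substitution $t\mapsto 4t/(1+t)^2$. Your explicit parity check that $n-n/d$ is even for odd $d$, and the observation $\bigl(\tfrac{1-t}{1+t}\bigr)^2=1-\tfrac{4t}{(1+t)^2}$, correctly fill in what the paper compresses into ``inverting this formula and simplifying.''
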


\begin{proof}
It is known that 
\begin{equation}
A_{k}(t)=\left(\frac{1+t}{2}\right)^{k+1}P_{k}^{\pk}\left(\frac{4t}{(1+t)^{2}}\right)\label{e-Apk}
\end{equation}
for every $k\geq1$; see \cite{Stembridge1997}.\footnote{We will later recover this formula as a special case of Theorem \ref{t-pkdesfixA} (a).} Combining (\ref{e-Apk}) with Corollary \ref{c-cycpk} (a), we obtain 
\begin{align*}
C_{n}^{\pk}\left(\frac{4t}{(1+t)^{2}}\right)=\frac{1}{n}\sum_{\substack{d\mid n\\
d\textup{ odd}
}
}\mu(d)\left(\frac{1-t}{1+t}\right)^{n-n/d}P_{n/d}^{\pk}\left(\frac{4t}{(1+t)^{2}}\right)\label{e-CpkdesPpkdes}
\end{align*}
Inverting this formula and simplifying completes the proof.
\end{proof}

\subsection{Counting cyclic permutations by left peaks}
Let
\[
C_{n}^{\lpk}(t)\coloneqq P^{\lpk}(\mathfrak{C}_{n};t)=\sum_{\pi\in\mathfrak{C}_{n}}t^{\lpk(\pi)}.
\]
We now state a result analogous to Corollary \ref{c-cycpk} (a) but
for the polynomials $C_{n}^{\lpk}(t)$.

\begin{thm}
\label{t-cyclpk} Let $n\geq2$. Then
\begin{align*}
C_{n}^{\lpk}\left(\frac{4t}{(1+t)^{2}}\right) & = 
\frac{B_n(t)-(1-t)^n}{n(1+t)^n}
\end{align*}
if $n$ is a power of $2$; otherwise, 
\[
C_{n}^{\lpk}\left(\frac{4t}{(1+t)^{2}}\right)=\frac{1}{n(1+t)^{n}}\sum_{\substack{d\mid n\\
d\textup{ odd}
}
}\mu(d)(1-t)^{n-n/d}B_{n/d}(t).
\]
\end{thm}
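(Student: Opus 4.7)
The plan is to apply Theorem \ref{t-lpkdes} (b) directly to the cyclic permutations. Since $Q(\mathfrak{C}_n)=L_n=\tfrac{1}{n}\sum_{d\mid n}\mu(d)p_d^{n/d}$, the only partitions $\lambda\vdash n$ with nonzero coefficient $c_\lambda$ in the power sum expansion are those of the form $\lambda=(d^{n/d})$ for $d\mid n$, and for such $\lambda$ we have $c_\lambda=\mu(d)/n$. For the partition $\lambda=(d^{n/d})$, the number of odd parts is $o(\lambda)=n/d$ if $d$ is odd and $o(\lambda)=0$ if $d$ is even. Plugging this into Theorem \ref{t-lpkdes} (b) and splitting the sum over divisors by parity gives
\begin{equation*}
\frac{(1+t)^{n}}{(1-t)^{n+1}}C_{n}^{\lpk}\!\left(\tfrac{4t}{(1+t)^{2}}\right)
=\frac{1}{n}\sum_{\substack{d\mid n\\ d\textup{ odd}}}\mu(d)\frac{B_{n/d}(t)}{(1-t)^{n/d+1}}
+\frac{1}{n(1-t)}\sum_{\substack{d\mid n\\ d\textup{ even}}}\mu(d),
\end{equation*}
where I have used $B_0(t)=1$ for the even-divisor terms.

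Next I would evaluate the even-divisor sum using Möbius inversion. Writing $n=2^a m$ with $m$ odd, the odd divisors of $n$ are exactly the divisors of $m$, so
\begin{equation*}
\sum_{\substack{d\mid n\\ d\textup{ odd}}}\mu(d)=\sum_{d\mid m}\mu(d)=[m=1],
\end{equation*}
and since $n\ge 2$ so $\sum_{d\mid n}\mu(d)=0$, the complementary even-divisor sum is $-[m=1]$. Thus if $n$ is not a power of $2$ (that is, $m>1$) both partial sums of $\mu(d)$ vanish at the $d$ with $\mu(d)\ne 0$ occurring among even divisors, and the formula collapses to
\begin{equation*}
C_{n}^{\lpk}\!\left(\tfrac{4t}{(1+t)^{2}}\right)
=\frac{1}{n(1+t)^{n}}\sum_{\substack{d\mid n\\ d\textup{ odd}}}\mu(d)(1-t)^{n-n/d}B_{n/d}(t),
\end{equation*}
as desired. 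If $n$ is a power of $2$, the only odd divisor is $d=1$ (contributing $B_n(t)/(1-t)^{n+1}$), and the even-divisor sum contributes $-1/(1-t)$; after clearing $(1-t)^{n+1}/(1+t)^n$ this yields $C_{n}^{\lpk}(4t/(1+t)^2)=(B_n(t)-(1-t)^n)/(n(1+t)^n)$.

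There is no real obstacle here beyond the bookkeeping. The only subtlety is remembering that the even divisors contribute a nontrivial constant term $B_0(t)=1$, which is exactly what produces the extra $(1-t)^n$ correction in the power-of-$2$ case; if one mistakenly drops the even-divisor terms, the two cases of the theorem look identical and the power-of-$2$ case is wrong. Once the $B_0(t)=1$ contribution is tracked and the identity $\sum_{d\mid m}\mu(d)=[m=1]$ is invoked, both claims drop out after clearing denominators.
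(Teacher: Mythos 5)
Your proof is correct and follows essentially the same route as the paper: apply Theorem \ref{t-lpkdes} (b) to $L_n$, note that the even divisors contribute $B_0(t)=1$ terms, and evaluate the even-divisor M\"obius sum (which the paper isolates as Lemma \ref{l-mu}, proved by exactly the argument you give via the largest odd divisor). Your emphasis on not dropping the $B_0(t)=1$ contribution is precisely the point that distinguishes the power-of-two case.
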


The proof of this theorem requires a M{\"o}bius function identity.
\begin{lem}
\label{l-mu}
\[
\sum_{\substack{d\mid n\\
d\textup{ even}
}
}\mu(d)=\begin{cases}
-1, & \text{if }n=2^{j}\text{ with }j>0,\\
\phantom{-}0, & \text{otherwise.}
\end{cases}
\]
\end{lem}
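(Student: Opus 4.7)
The plan is to compute the sum by factoring out the powers of $2$ from each even divisor. Write $n = 2^a m$ with $m$ odd. Every even divisor $d$ of $n$ can be written uniquely as $d = 2^b e$ where $1 \le b \le a$ and $e \mid m$. Since $e$ is odd, $\gcd(2^b, e) = 1$, so multiplicativity of $\mu$ gives $\mu(2^b e) = \mu(2^b)\mu(e)$. Thus
\begin{equation*}
\sum_{\substack{d \mid n \\ d \text{ even}}} \mu(d) = \sum_{b=1}^{a} \mu(2^b) \sum_{e \mid m} \mu(e).
\end{equation*}

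Now $\mu(2^b) = -1$ if $b = 1$ and $\mu(2^b) = 0$ if $b \ge 2$, so the inner $b$-sum collapses to $\mu(2) = -1$ provided $a \ge 1$, and is an empty sum (equal to $0$) when $a = 0$. Combining this with the classical identity $\sum_{e \mid m} \mu(e) = [m = 1]$, I get three cases: if $n$ is odd then $a = 0$ and the whole expression is $0$; if $n = 2^j$ with $j \ge 1$ then $a \ge 1$ and $m = 1$, giving $(-1)(1) = -1$; and if $n$ is even but not a power of $2$ then $a \ge 1$ and $m > 1$ is odd, so $\sum_{e \mid m}\mu(e) = 0$ and the product vanishes. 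These three cases exactly match the claimed formula.

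There is essentially no obstacle here; the only thing to be careful about is the edge case $a = 0$ (odd $n$), where the outer sum is empty rather than being evaluated via the $\mu(2^b)$ factor. The proof is a two-line computation once the factorization $d = 2^b e$ is set up.
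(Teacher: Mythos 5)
Your proof is correct, but it takes a genuinely different route from the paper's. The paper handles $n=1$ separately and then, for $n>1$, uses the identity $\sum_{d\mid n}\mu(d)=0$ to convert the sum over even divisors into the negative of the sum over odd divisors, which it then evaluates as $\sum_{d\mid m}\mu(d)$ for $m$ the largest odd divisor of $n$. You instead factor each even divisor as $d=2^b e$ with $e$ odd and use multiplicativity of $\mu$ to split the sum into the product $\bigl(\sum_{b=1}^{a}\mu(2^b)\bigr)\bigl(\sum_{e\mid m}\mu(e)\bigr)$. Your factorization argument is slightly longer to set up but handles all cases uniformly (the odd-$n$ case falls out as an empty outer sum, with no need to treat $n=1$ separately), and it only invokes the classical identity $\sum_{e\mid m}\mu(e)=[m=1]$ once; the paper's argument is shorter on the page but uses that identity twice and needs the initial case split. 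Both are complete and correct.
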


\begin{proof}
The result is true for $n=1$, so suppose that $n>1$. Then by a fundamental property of the M\"obius function, $\sum_{d\mid n} \mu(d)=0$, so 
\begin{equation*}
\sum_{\substack{d\mid n\\
d\textup{ even}}}\mu(d)
=-\sum_{\substack{d\mid n\\
d\textup{ odd}}}\mu(d).
\end{equation*}
Now let $m$ be the largest odd divisor of $n$. Then 
\begin{equation*}
\sum_{\substack{d\mid n\\
  d\textup{ odd}}}\mu(d)
=\sum_{d\mid m}\mu(d)=
\begin{cases}
1,&\text{if $m=1$},\\
0,&\text{otherwise},
\end{cases}
\end{equation*}
and the result follows.
\end{proof}

\begin{proof}[Proof of Theorem \ref{t-cyclpk}]
By Theorem \ref{t-lpkdes} (b), we have 
\[
C_{n}^{\lpk}\left(\frac{4t}{(1+t)^{2}}\right)=\frac{1}{n(1+t)^{n}}\sum_{\substack{d\mid n\\
d\textup{ odd}
}
}\mu(d)(1-t)^{n-n/d}B_{n/d}(t)+\frac{1}{n}\left(\frac{1-t}{1+t}\right)^{n}\sum_{\substack{d\mid n\\
d\textup{ even}
}
}\mu(d).
\]
Applying Lemma \ref{l-mu} and noting that 1 is the only odd divisor of any power of 2, we obtain the desired result.
\end{proof}

Now, let 
\[
P_{n}^{\lpk}(t)\coloneqq P^{\lpk}(\mathfrak{S}_{n};t)=\sum_{\pi\in\mathfrak{S}_{n}}t^{\lpk(\pi)}.
\]
Our next result expresses the polynomials $C_{n}^{\lpk}(t)$ in terms
of the $P_{n}^{\lpk}(t)$.
\begin{cor}\label{c-ClpkPlpk}
Let $n\geq2$. Then 
\[
C_{n}^{\lpk}(t)=\frac{1}{n}(P_{n}^{\lpk}(t)-(1-t)^{n/2})
\]
if $n$ is a power of $2$; otherwise, 
\[
C_{n}^{\lpk}(t)=\frac{1}{n}\sum_{\substack{d\mid n\\
d\textup{ odd}
}
}\mu(d)(1-t)^{(n-n/d)/2}P_{n/d}^{\lpk}(t).
\]
\end{cor}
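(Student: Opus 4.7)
The plan is to follow exactly the same pattern used in the proof of Corollary \ref{c-CpkdesPpkdes}: we will express the $B_{n/d}(t)$ on the right-hand side of Theorem \ref{t-cyclpk} in terms of $P_{n/d}^{\lpk}$ evaluated at the same substitution $4t/(1+t)^2$, and then change variables.

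First, I would establish the analogue of Stembridge's identity \eqref{e-Apk} for left peaks, namely
\begin{equation*}
B_{n}(t)=(1+t)^{n}\,P_{n}^{\lpk}\!\left(\frac{4t}{(1+t)^{2}}\right)
\end{equation*}
for every $n\geq 1$. This is a direct specialization of Theorem \ref{t-lpkdes} (b): taking $\Pi=\mathfrak{S}_{n}$ gives $Q(\Pi)=p_{1}^{n}$, so the only nonzero coefficient in the power sum expansion is $c_{(1^{n})}=1$, for which $o(\lambda)=n$; the right side of Theorem \ref{t-lpkdes} (b) collapses to $B_{n}(t)/(1-t)^{n+1}$, and rearranging yields the desired identity. (Alternatively one may cite Petersen's evaluation of the left peak polynomial.)

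Second, I would substitute this identity into the two formulas of Theorem \ref{t-cyclpk}. In the generic case where $n$ is not a power of $2$, this gives
\begin{equation*}
C_{n}^{\lpk}\!\left(\frac{4t}{(1+t)^{2}}\right)
=\frac{1}{n}\sum_{\substack{d\mid n\\ d\textup{ odd}}}\mu(d)\left(\frac{1-t}{1+t}\right)^{n-n/d}P_{n/d}^{\lpk}\!\left(\frac{4t}{(1+t)^{2}}\right),
\end{equation*}
since the factor $(1+t)^{n/d}$ coming from the identity above cancels $n-n/d$ copies of $(1+t)$ in the denominator $(1+t)^{n}$. For $n=2^{j}$ the sum over odd divisors reduces to the $d=1$ term, and the extra $-(1-t)^{n}/(n(1+t)^{n})$ term survives, giving
\begin{equation*}
C_{n}^{\lpk}\!\left(\frac{4t}{(1+t)^{2}}\right)=\frac{1}{n}P_{n}^{\lpk}\!\left(\frac{4t}{(1+t)^{2}}\right)-\frac{1}{n}\left(\frac{1-t}{1+t}\right)^{n}.
\end{equation*}

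Third, I would invert the substitution $u=4t/(1+t)^{2}$. The key algebraic identity is
\begin{equation*}
1-\frac{4t}{(1+t)^{2}}=\frac{(1-t)^{2}}{(1+t)^{2}},
\end{equation*}
so that $(1-t)/(1+t)=\sqrt{1-u}$ and therefore $((1-t)/(1+t))^{n-n/d}=(1-u)^{(n-n/d)/2}$, while $((1-t)/(1+t))^{n}=(1-u)^{n/2}$. Replacing $u$ by $t$ in the resulting formulas yields exactly the two cases in the statement.

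There is no real obstacle: the only subtle point is observing that $n-n/d$ is even whenever $d$ is odd (so the half-power is an integer), and that $n/2$ is an integer when $n$ is a power of $2$ with $n\geq 2$; both are immediate. The rest is the same bookkeeping as in the proof of Corollary \ref{c-CpkdesPpkdes}.
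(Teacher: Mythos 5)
Your proposal is correct and follows essentially the same route as the paper: combine Petersen's identity $B_{k}(t)=(1+t)^{k}P_{k}^{\lpk}\bigl(4t/(1+t)^{2}\bigr)$ (which the paper cites rather than re-derives, though your derivation from Theorem \ref{t-lpkdes}~(b) with $Q(\mathfrak{S}_n)=p_1^n$ is valid) with Theorem \ref{t-cyclpk}, then invert the substitution using $\bigl((1-t)/(1+t)\bigr)^{2}=1-4t/(1+t)^{2}$. Your observation that $n-n/d=(n/d)(d-1)$ is even for odd $d$ correctly justifies the integrality of the exponents, so the argument is complete.
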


\begin{proof}
It is known that 
\[
B_{k}(t)=(1+t)^{k}P_{k}^{\lpk}\left(\frac{4t}{(1+t)^{2}}\right)
\]
for every $k\geq 0$; see \cite[Proposition 4.15]{Petersen2007}.\footnote{We will later recover this formula as a special case of Theorem \ref{t-lpkfixB}.} Combining this
formula with Theorem \ref{t-cyclpk} yields
\begin{align*}
C_{n}^{\lpk}\left(\frac{4t}{(1+t)^{2}}\right) & = 
\frac{1}{n}\left(P_{n}^{\lpk}\left(\frac{4t}{(1+t)^{2}}\right)-\left(\frac{1-t}{1+t}\right)^n\right)
\end{align*}
if $n$ is a power of 2, and
\[
C_{n}^{\lpk}\left(\frac{4t}{(1+t)^{2}}\right)=\frac{1}{n}\sum_{\substack{d\mid n\\
d\textup{ odd}
}
}\mu(d)\left(\frac{1-t}{1+t}\right)^{n-n/d}P_{n/d}^{\lpk}\left(\frac{4t}{(1+t)^{2}}\right)
\]
otherwise. Inverting these formulas and simplifying yields the desired
result.
\end{proof}

\subsection{Counting cyclic permutations by up-down runs}

Finally, we state the analogous result for the polynomials
\[
C_{n}^{\udr}(t)\coloneqq P^{\udr}(\mathfrak{C}_{n};t)=\sum_{\pi\in\mathfrak{C}_{n}}t^{\udr(\pi)},
\]
which is proved in the same way but using Theorem \ref{t-udr}.
\begin{thm}
Let $n\geq2$. Then 
\begin{align*}
C_{n}^{\udr}\left(\frac{2t}{1+t^{2}}\right) & =\frac{2}{n(1+t^{2})^{n}(1+t)^{2}}(2^{n}A_{n}(t^{2})+tB_{n}(t^{2})-t(1-t^{2})^{n})
\end{align*}
if $n$ is a power of $2$; otherwise, 
\begin{align*}
C_{n}^{\udr}\left(\frac{2t}{1+t^{2}}\right) & =\frac{2}{n(1+t^{2})^{n}(1+t)^{2}}\sum_{\substack{d\mid n\\
d\textup{ odd}
}
}\mu(d)(1-t^{2})^{n-n/d}(2^{n/d}A_{n/d}(t^{2})+tB_{n/d}(t^{2})).
\end{align*}
\end{thm}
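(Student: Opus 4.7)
The plan is to mimic the proofs of Corollary \ref{c-cycpk} and Theorem \ref{t-cyclpk}, applying Theorem \ref{t-udr} to $\Pi=\mathfrak{C}_n$ and using the power sum expansion (\ref{e-lyndonp}) of $L_n$. Writing $L_n = \sum_{\lambda\vdash n} c_\lambda p_\lambda$, the only nonzero coefficients are $c_{(d^{n/d})} = \mu(d)/n$ for $d\mid n$, where $(d^{n/d})$ denotes the partition with $n/d$ parts all equal to $d$. For such $\lambda=(d^{n/d})$, we have $l(\lambda)=n/d$, and $o(\lambda)=n/d$ if $d$ is odd while $o(\lambda)=0$ if $d$ is even. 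Moreover, $\lambda$ is itself an odd partition precisely when $d$ is odd.

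Substituting into the power-sum side of Theorem \ref{t-udr} (the expression built from $A_{l(\lambda)}$ and $B_{o(\lambda)}$), the first (``odd $\lambda$'') sum collects contributions only from odd divisors $d$, giving
\begin{equation*}
\frac{1}{n}\sum_{\substack{d\mid n\\ d\text{ odd}}}\mu(d)\,2^{n/d}\frac{A_{n/d}(t^2)}{(1-t^2)^{n/d+1}},
\end{equation*}
while the second sum splits according to the parity of $d$: odd divisors contribute $\frac{t}{n}\sum_{d\mid n,\,d\text{ odd}}\mu(d)\frac{B_{n/d}(t^2)}{(1-t^2)^{n/d+1}}$, and the even divisors, using $B_0(t^2)=1$, contribute $\frac{t}{n(1-t^2)}\sum_{d\mid n,\,d\text{ even}}\mu(d)$.

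Next I invoke Lemma \ref{l-mu}: the last sum equals $-1$ if $n$ is a power of $2$ and $0$ otherwise. Combining the odd-$d$ terms gives
\begin{equation*}
\frac{(1+t^2)^n}{2(1-t)^2(1-t^2)^{n-1}}C_n^{\udr}\!\left(\frac{2t}{1+t^2}\right)
=\frac{1}{n}\sum_{\substack{d\mid n\\d\text{ odd}}}\mu(d)\frac{2^{n/d}A_{n/d}(t^2)+tB_{n/d}(t^2)}{(1-t^2)^{n/d+1}}+\frac{t\,\epsilon_n}{n(1-t^2)},
\end{equation*}
where $\epsilon_n=-1$ when $n$ is a power of $2$ and $\epsilon_n=0$ otherwise.

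Finally I solve for $C_n^{\udr}$ by multiplying through by $\frac{2(1-t)^2(1-t^2)^{n-1}}{(1+t^2)^n}$. The prefactor $\frac{2(1-t)^2(1-t^2)^{n-1}}{(1-t^2)^{n/d+1}}$ simplifies to $\frac{2(1-t^2)^{n-n/d}}{(1+t)^2}$ using $(1-t)^2=(1-t^2)^2/(1+t)^2$, yielding the stated $\frac{2}{n(1+t^2)^n(1+t)^2}$ prefactor in front of the divisor sum. For $n$ a power of $2$, the same simplification applied to the $\epsilon_n$ term produces the correction $\frac{-2t(1-t^2)^n}{n(1+t^2)^n(1+t)^2}$. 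This yields exactly the two displayed formulas. The main obstacle is purely bookkeeping, namely tracking the powers of $(1-t^2)$ and $(1+t)$ so that the prefactor coming out of Theorem \ref{t-udr} assembles cleanly with the summand denominators into a single factor $(1-t^2)^{n-n/d}$, and likewise verifying that the ``power of $2$'' correction $-t\epsilon_n/(1-t^2)$ produces the stated $-t(1-t^2)^n$ summand after the same simplification.
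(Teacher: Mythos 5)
Your proposal is correct and is essentially the paper's own argument: the paper proves this theorem exactly by applying Theorem \ref{t-udr} to $Q(\mathfrak{C}_n)=L_n$, splitting the second sum by the parity of the divisor $d$, and invoking Lemma \ref{l-mu} to isolate the power-of-two correction term, just as you do. The only detail left implicit (and trivial) is that for $n$ a power of $2$ the odd-divisor sum collapses to the single term $d=1$.
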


\section{Involutions}

A permutation $\pi$ is called an \textit{involution} if $\pi^{2}$
is the identity permutation, or equivalently, if $\pi$ has no cycles of length greater than 2. We denote the set of involutions of length $n$ by $\mathfrak{I}_{n}$. 

The quasisymmetric generating function for all involutions weighted
by length and number of fixed points is known to be
\begin{align*}
Q_{I} & \coloneqq\sum_{n=0}^{\infty}Q^{\fix}(\mathfrak{I}_{n})x^{n}=\prod_{i}\frac{1}{1-zxx_{i}}\prod_{i<j}\frac{1}{1-x^{2}x_{i}x_{j}};
\end{align*}
see \cite[Equation (7.1)]{Gessel1993}.

\subsection{Counting involutions by peaks, descents, and fixed points}

Define
\[
I_{n}^{(\pk,\des,\fix)}(y,t,z)\coloneqq P^{(\pk,\des,\fix)}(\mathfrak{I}_{n};y,t,z)=\sum_{\pi\in\mathfrak{I}_{n}}y^{\pk(\pi)+1}t^{\des(\pi)+1}z^{\fix(\pi)};
\]
this polynomial encodes the joint distribution of the peak number,
descent number, and number of fixed points over involutions of length
$n$. Our first result of this section is a formula for the generating
function of these polynomials.
\begin{thm}
\label{t-Ipkdesfix} 
\begin{multline*}
\quad
\frac{1}{1-t}+\frac{1}{1+y}\sum_{n=1}^{\infty}\left(\frac{1+yt}{1-t}\right)^{n+1}I_{n}^{(\pk,\des,\fix)}\left(\frac{(1+y)^{2}t}{(y+t)(1+yt)},\frac{y+t}{1+yt},z\right)x^{n}\\
=\sum_{k=0}^{\infty}\frac{(1+zxy)^{k}(1+x^{2}y)^{k^{2}}t^{k}}{(1-zx)^{k}(1-x^{2})^{{k \choose 2}}(1-x^{2}y^{2})^{{k+1 \choose 2}}}.
\quad
\end{multline*}
\end{thm}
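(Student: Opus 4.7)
The plan is to derive the result by applying Theorem~\ref{t-pkdesst} with $\Pi = \mathfrak{I}_n$ and $\st = \fix$, and then summing over $n$. For each $n\geq1$, Theorem~\ref{t-pkdesst} gives
\begin{equation*}
\frac{1}{1+y}\left(\frac{1+yt}{1-t}\right)^{n+1}I_{n}^{(\pk,\des,\fix)}\!\left(\frac{(1+y)^{2}t}{(y+t)(1+yt)},\frac{y+t}{1+yt},z\right)
=\sum_{k=0}^{\infty}\Theta_{y,k}\bigl(Q^{\fix}(\mathfrak{I}_{n})\bigr)t^{k}.
\end{equation*}
Multiplying by $x^{n}$ and summing over $n\geq1$, with the $n=0$ term $\Theta_{y,k}(Q^{\fix}(\mathfrak{I}_{0}))=\Theta_{y,k}(1)=1$ accounting for the outer $1/(1-t)$ on the left, the identity becomes
\begin{equation*}
\text{LHS} \;=\; \sum_{k=0}^{\infty}\Theta_{y,k}(Q_{I})\,t^{k},
\end{equation*}
where I use the linearity of $\Theta_{y,k}$ and the definition $Q_{I}=\sum_{n\geq0}Q^{\fix}(\mathfrak{I}_{n})x^{n}$. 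Thus the theorem reduces to the closed-form evaluation
\begin{equation*}
\Theta_{y,k}(Q_{I}) \;=\; \frac{(1+zxy)^{k}(1+x^{2}y)^{k^{2}}}{(1-zx)^{k}(1-x^{2})^{\binom{k}{2}}(1-x^{2}y^{2})^{\binom{k+1}{2}}}.
\end{equation*}

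To carry out this evaluation, I would take the logarithm of the infinite-product expression for $Q_{I}$, expand each factor as a geometric series, and collect terms by symmetric function identities: since $\sum_{i<j}x_{i}^{m}x_{j}^{m}=(p_{m}^{2}-p_{2m})/2$, one obtains
\begin{equation*}
\log Q_{I} \;=\; \sum_{m=1}^{\infty}\frac{(zx)^{m}p_{m}}{m}+\sum_{m=1}^{\infty}\frac{x^{2m}(p_{m}^{2}-p_{2m})}{2m}.
\end{equation*}
Because plethystic substitution is a ring homomorphism on power sums, applying $\Theta_{y,k}$ amounts to the substitution $p_{m}\mapsto p_{m}[k(1-\alpha)]=k(1-\alpha^{m})$ followed by $\alpha\mapsto -y$. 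After the first substitution the logarithm becomes
\begin{equation*}
\sum_{m\geq1}\frac{k(zx)^{m}(1-\alpha^{m})}{m}+\sum_{m\geq1}\frac{x^{2m}\bigl[k^{2}(1-\alpha^{m})^{2}-k(1-\alpha^{2m})\bigr]}{2m}.
\end{equation*}

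The key algebraic step is expanding $(1-\alpha^{m})^{2}$ and grouping to obtain
\begin{equation*}
k^{2}(1-\alpha^{m})^{2}-k(1-\alpha^{2m}) \;=\; 2\binom{k}{2}-2k^{2}\alpha^{m}+2\binom{k+1}{2}\alpha^{2m},
\end{equation*}
after which each piece is summed in closed form via $-\log(1-u)=\sum_{m\geq1}u^{m}/m$, with the four resulting geometric series producing $\log(1-zx)$, $\log(1-zx\alpha)$, $\log(1-x^{2})$, $\log(1-x^{2}\alpha)$, and $\log(1-x^{2}\alpha^{2})$. Exponentiating and specializing $\alpha=-y$ gives the desired formula for $\Theta_{y,k}(Q_{I})$, finishing the proof. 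The main obstacle is the bookkeeping in this step: the squared power sum $p_{m}^{2}$ coming from the $\prod_{i<j}$ factor forces the appearance of the binomial coefficients $\binom{k}{2}$ and $\binom{k+1}{2}$ in the exponents, and one has to be careful to identify the constant and $\alpha^{2m}$ parts of $k^{2}(1-\alpha^{m})^{2}-k(1-\alpha^{2m})$ correctly—up to that point the calculation is a routine application of the plethystic machinery developed in Section~2, together with the pattern used in the proof of Theorem~\ref{t-pkdes}.
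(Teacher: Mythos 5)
Your proposal is correct and follows essentially the same route as the paper: apply Theorem~\ref{t-pkdesst} to each $\mathfrak{I}_n$, sum over $n$, and evaluate $\Theta_{y,k}(Q_I)$ in closed form, with your key identity $k^{2}(1-\alpha^{m})^{2}-k(1-\alpha^{2m})=2\binom{k}{2}-2k^{2}\alpha^{m}+2\binom{k+1}{2}\alpha^{2m}$ matching the paper's Lemma~\ref{l-QIpls}. The only cosmetic difference is that you expand $\log Q_I$ from the infinite product directly, whereas the paper packages the same computation as $Q_I=H[zxe_1+x^2e_2]$ and invokes Lemmas~\ref{l-Hps} and~\ref{l-HE}.
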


To prove Theorem \ref{t-Ipkdesfix}, we will need to evaluate the
expression $Q_{I}[k(1-\alpha)]$. In doing so, we shall first give
an alternate description of the quasisymmetric generating function
$Q_{I}$ as a plethystic substitution of $H$.
\begin{lem}
\label{l-Qinvalt} $Q_{I}=H[zxe_{1}+x^{2}e_{2}]$
\end{lem}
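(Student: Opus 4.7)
The plan is to express $zxe_1+x^2e_2$ as a sum of monic terms and then repeatedly apply the multiplicativity of $H[\cdot\,]$ together with the geometric-series formula $H[\mathsf{m}]=1/(1-\mathsf{m})$.

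First I would observe that, by definition, $e_1=\sum_i x_i$ and $e_2=\sum_{i<j}x_ix_j$ are sums of monic terms, so
\[
zxe_1+x^2e_2=\sum_{i}zxx_i+\sum_{i<j}x^2x_ix_j
\]
is likewise a sum of monic terms. Next I would apply Lemma \ref{l-Hps} (a), which says that $H$ converts addition to multiplication (and which extends to convergent infinite sums of monic terms in our formal power series setting, since every monic term that appears has strictly positive degree in $x$), to obtain
\[
H[zxe_1+x^2e_2]=\prod_{i}H[zxx_i]\;\prod_{i<j}H[x^2x_ix_j].
\]
Finally, Lemma \ref{l-HE} (c) gives $H[\mathsf{m}]=1/(1-\mathsf{m})$ for each monic term $\mathsf{m}$, so the right-hand side becomes
\[
\prod_i\frac{1}{1-zxx_i}\prod_{i<j}\frac{1}{1-x^2x_ix_j},
\]
which is precisely the product formula for $Q_I$ recalled just before the lemma.

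There is no real obstacle here beyond the bookkeeping of monic terms; the only subtlety is confirming that the infinite product $H[\,\cdot\,]$ of a sum extends from the two-variable case of Lemma \ref{l-Hps} (a) to the infinite sums $e_1,e_2$, but this is standard since partial sums converge in the $x$-adic topology of the relevant power series ring.
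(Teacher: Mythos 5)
Your proof is correct and is essentially the paper's argument: the paper's proof is the one-liner ``this follows directly from Theorem \ref{t-monic},'' i.e., substitute the monic terms of $zxe_1+x^2e_2$ into the product formula $H=\prod_n(1-x_n)^{-1}$, which yields exactly the product you obtain. Your route through Lemma \ref{l-Hps} (a) and Lemma \ref{l-HE} (c) just reassembles that same computation factor by factor, and your remark about extending multiplicativity to infinite sums of positive-degree monic terms is the right (and standard) justification.
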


\begin{proof}
This follows directly from Theorem \ref{t-monic}.
\end{proof}
\begin{lem}
\label{l-QIpls}
\[
Q_{I}[k(1-\alpha)]=\frac{(1-zx\alpha)^{k}(1-x^{2}\alpha)^{k^{2}}}{(1-zx)^{k}(1-x^{2})^{{k \choose 2}}(1-x^{2}\alpha^{2})^{{k+1 \choose 2}}}.
\]
\end{lem}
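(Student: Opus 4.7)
The plan is to combine the plethystic description $Q_I = H[zxe_1 + x^2 e_2]$ from Lemma \ref{l-Qinvalt} with the identities in Lemmas \ref{l-Hps} and \ref{l-HE}. First, Lemma \ref{l-Hps}(a) lets me split $Q_I$ as the product $H[zxe_1]\cdot H[x^2 e_2]$, and since $f\mapsto f[k(1-\alpha)]$ is an algebra homomorphism in $f$, the plethysm factors as
\[
Q_I[k(1-\alpha)] = H[zxe_1][k(1-\alpha)]\cdot H[x^2 e_2][k(1-\alpha)].
\]
For the first factor, Lemma \ref{l-Hps}(b) gives $H[zxe_1] = H(zx)$, and then Lemma \ref{l-HE}(d) with $\mathsf{m}=zx$ yields $H(zx)[k(1-\alpha)] = (1-zx\alpha)^k/(1-zx)^k$, which is exactly the $(1-zx\alpha)^k/(1-zx)^k$ portion of the target.

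The bulk of the work will be evaluating the second factor. My plan is to use associativity of plethysm together with the convention that $x^2$ is a scalar under $[k(1-\alpha)]$ to rewrite $H[x^2 e_2][k(1-\alpha)] = H\bigl[x^2\cdot e_2[k(1-\alpha)]\bigr]$. Starting from $e_2 = (p_1^2 - p_2)/2$ and $p_n[k(1-\alpha)] = k(1-\alpha^n)$, a direct computation gives
\[
e_2[k(1-\alpha)] = \tfrac{1}{2}\bigl[k^2(1-\alpha)^2 - k(1-\alpha^2)\bigr] = \binom{k}{2} - k^2\alpha + \binom{k+1}{2}\alpha^2.
\]
I would then apply Lemma \ref{l-Hps}(a) and (c) to split $H$ of this three-term sum, and Lemma \ref{l-HE}(c) (the identity $H[\mathsf{m}] = 1/(1-\mathsf{m})$) to evaluate the three resulting factors as $(1-x^2)^{-\binom{k}{2}}$, $(1-x^2\alpha)^{k^2}$, and $(1-x^2\alpha^2)^{-\binom{k+1}{2}}$. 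Assembling these gives
\[
H\bigl[x^2 e_2[k(1-\alpha)]\bigr] = \frac{(1-x^2\alpha)^{k^2}}{(1-x^2)^{\binom{k}{2}}(1-x^2\alpha^2)^{\binom{k+1}{2}}},
\]
and multiplying by the first factor produces the claimed formula.

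The main subtlety—and really the only obstacle—is to keep the plethystic conventions straight: $\alpha$ behaves as a plethystic variable (so $p_n[-\alpha] = -\alpha^n$), while $z$ and $x$ are treated as scalar coefficients under $[k(1-\alpha)]$. With this settled, the required associativity $H[g][k(1-\alpha)] = H[g[k(1-\alpha)]]$ reduces to a routine check on power sums, and the remainder of the argument is a straightforward calculation in the exponential/power-sum formalism. If associativity is to be avoided entirely, one can instead expand $H[x^2 e_2] = \exp\bigl(\sum_n \tfrac{x^{2n}}{2n}(p_n^2 - p_{2n})\bigr)$ and apply $[k(1-\alpha)]$ termwise, arriving at the same three logarithms after summing $\sum_n u^n/n = -\log(1-u)$.
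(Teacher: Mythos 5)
Your proposal is correct and follows essentially the same route as the paper: both start from $Q_I=H[zxe_1+x^2e_2]$, use associativity of plethysm to reduce to computing $(zxe_1+x^2e_2)[k(1-\alpha)]$ via $e_1=p_1$, $e_2=(p_1^2-p_2)/2$, and then evaluate $H$ of the resulting five monic terms with Lemmas \ref{l-Hps} and \ref{l-HE}. The only difference is cosmetic—you split $H[zxe_1]H[x^2e_2]$ before applying $[k(1-\alpha)]$ and cite Lemma \ref{l-HE}(d) for the first factor, while the paper substitutes into the whole argument at once—so the two arguments are interchangeable.
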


\begin{proof}
First, observe that
\begin{align}
(zxe_{1}+x^{2}e_{2})[k(1-\alpha)] & =\Big(zxp_{1}+\frac{x^{2}}{2}(p_{1}^{2}-p_{2})\Big)[k(1-\alpha)]\nonumber \\
 & =kzx(1-\alpha)+\frac{x^{2}}{2}(k^{2}(1-\alpha)^{2}-k(1-\alpha^{2}))\nonumber \\
 & =kzx-kzx\alpha+{k \choose 2}x^{2}+{k+1 \choose 2}x^{2}\alpha^{2}-k^{2}x^{2}\alpha.\label{e-invpls}
\end{align}
Thus, we have
\begin{align*}
Q_{I}[k(1-\alpha)] & =H[zxe_{1}+x^{2}e_{2}][k(1-\alpha)] && \text{(by Lemma }\ref{l-Qinvalt})\\
 & =H\left[kzx-kzx\alpha+{k \choose 2}x^{2}+{k+1 \choose 2}x^{2}\alpha^{2}-k^{2}x^{2}\alpha\right] && \text{(by (}\ref{e-invpls}\text{))}\\
 & =\frac{H[zx]^{k}H[x^{2}]^{{k \choose 2}}H[x^{2}\alpha^{2}]^{{k+1 \choose 2}}}{H[zx\alpha]^{k}H[x^{2}\alpha]^{k^{2}}} && \text{(by Lemma }\ref{l-Hps})\\
 & =\frac{(1-zx\alpha)^{k}(1-x^{2}\alpha)^{k^{2}}}{(1-zx)^{k}(1-x^{2})^{{k \choose 2}}(1-x^{2}\alpha^{2})^{{k+1 \choose 2}}}, && \text{(by Lemma }\ref{l-HE}\text{ (c)})
\end{align*}
thus completing the proof.
\end{proof}
We are now ready to prove Theorem \ref{t-Ipkdesfix}.
\begin{proof}[Proof of Theorem \ref{t-Ipkdesfix}]
 By Lemma \ref{l-QIpls}, we have 
\begin{align}
\sum_{k=0}^{\infty}\frac{(1+zxy)^{k}(1+x^{2}y)^{k^{2}}t^{k}}{(1-zx)^{k}(1-x^{2})^{{k \choose 2}}(1-x^{2}y^{2})^{{k+1 \choose 2}}} & =\sum_{k=0}^{\infty}\left.Q_{I}[k(1-\alpha)]\right|_{\alpha=-y}t^{k}\nonumber \\
 & =\sum_{n=0}^{\infty}\sum_{k=0}^{\infty}\Theta_{y,k}(Q^{\fix}(\mathfrak{I}_{n}))t^{k}x^{n}\nonumber \\
 & =\frac{1}{1-t}+\sum_{n=1}^{\infty}\sum_{k=0}^{\infty}\Theta_{y,k}(Q^{\fix}(\mathfrak{I}_{n}))t^{k}x^{n}.\label{e-Ipkdesfix}
\end{align}
Then we apply Theorem \ref{t-pkdesst} to obtain
\begin{multline*}
\qquad
\sum_{n=1}^{\infty}\sum_{k=0}^{\infty}\Theta_{y,k}(Q^{\fix}(\mathfrak{I}_{n}))t^{k}x^{n}\\
=\frac{1}{1+y}\sum_{n=1}^{\infty}\left(\frac{1+yt}{1-t}\right)^{n+1}I_{n}^{(\pk,\des,\fix)}\left(\frac{(1+y)^{2}t}{(y+t)(1+yt)},\frac{y+t}{1+yt},z\right)x^{n},
\qquad
\end{multline*}
which combined with (\ref{e-Ipkdesfix}) proves the result.
\end{proof}
Define
\[
I_{n}^{(\pk,\fix)}(t,z)\coloneqq\sum_{\pi\in\mathfrak{I}_{n}}t^{\pk(\pi)+1}z^{\fix(\pi)}\quad\text{and}\quad I_{n}^{(\des,\fix)}(t,z)\coloneqq\sum_{\pi\in\mathfrak{I}_{n}}t^{\des(\pi)+1}z^{\fix(\pi)}.
\]
Specializing Theorem \ref{t-Ipkdesfix} at $y=1$ yields the following
formula for the joint distribution of $\pk$ and $\fix$ over $\mathfrak{I}_{n}$.
\begin{cor}
\begin{align*}
\frac{1}{1-t}+\frac{1}{2}\sum_{n=1}^{\infty}\left(\frac{1+t}{1-t}\right)^{n+1}I_{n}^{(\pk,\fix)}\left(\frac{4t}{(1+t)^{2}},z\right)x^{n} & =\sum_{k=0}^{\infty}\frac{(1+zx)^{k}(1+x^{2})^{k^{2}}t^{k}}{(1-zx)^{k}(1-x^{2})^{{k \choose 2}}(1-x^{2})^{{k+1 \choose 2}}}
\end{align*}
\end{cor}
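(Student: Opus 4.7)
The plan is to obtain this corollary as a straightforward specialization of Theorem~\ref{t-Ipkdesfix} at $y=1$. First I would check that each ingredient on the left-hand side of Theorem~\ref{t-Ipkdesfix} transforms correctly: the prefactor $1/(1+y)$ becomes $1/2$, the factor $(1+yt)/(1-t)$ becomes $(1+t)/(1-t)$, and inside the polynomial $I_n^{(\pk,\des,\fix)}$ the first argument $(1+y)^2 t/((y+t)(1+yt))$ simplifies to $4t/(1+t)^2$ while the second argument $(y+t)/(1+yt)$ simplifies to $1$. Since
\[
I_n^{(\pk,\des,\fix)}(u,1,z) = \sum_{\pi\in\mathfrak{I}_n}u^{\pk(\pi)+1}\,1^{\des(\pi)+1}z^{\fix(\pi)} = I_n^{(\pk,\fix)}(u,z),
\]
the left-hand side reduces exactly as claimed.

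Next I would check the right-hand side specialization. Setting $y=1$ in
\[
\frac{(1+zxy)^{k}(1+x^{2}y)^{k^{2}}t^{k}}{(1-zx)^{k}(1-x^{2})^{\binom{k}{2}}(1-x^{2}y^{2})^{\binom{k+1}{2}}}
\]
yields
\[
\frac{(1+zx)^{k}(1+x^{2})^{k^{2}}t^{k}}{(1-zx)^{k}(1-x^{2})^{\binom{k}{2}}(1-x^{2})^{\binom{k+1}{2}}},
\]
matching the stated right-hand side.

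The substitution is legitimate because Theorem~\ref{t-Ipkdesfix} is an identity of formal power series in $x$, $t$, $y$, and $z$, and no denominator on either side vanishes when $y=1$ (the only potentially worrisome factors are $1+y$, $y+t$, $1+yt$, and $1-x^2 y^2$, each of which either becomes a nonzero rational expression or enters as a factor whose simplification I have already tracked). Consequently, the two sides are equal as power series after the specialization, which is precisely the claimed corollary. There is no significant obstacle: the proof is a direct substitution, and the only thing to be careful about is that the $t$-specialization $t\mapsto 1$ inside $I_n^{(\pk,\des,\fix)}$ collapses the descent marker cleanly, which it does by definition.
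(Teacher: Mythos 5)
Your proof is correct and matches the paper's own argument, which is simply the specialization of Theorem~\ref{t-Ipkdesfix} at $y=1$ (the paper states this in one line without the explicit checks you carried out). Nothing further is needed.
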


Specializing at $y=0$, on the other hand, yields a formula for the joint distribution of $\des$ and $\fix$ over $\mathfrak{I}_{n}$,  which is equivalent to  Equation (5.5) of D\'esarm\'enien and Foata \cite{Foata1985} and Equation (7.3) of Gessel\textendash Reutenauer \cite{Gessel1993}. See also Strehl \cite{Strehl1980} and Guo--Zeng \cite{Guo2006}.

\begin{cor}
\begin{align*}
\frac{1}{1-t}+\sum_{n=1}^{\infty}\frac{I_{n}^{(\des,\fix)}(t,z)}{(1-t)^{n+1}}x^{n} & =\sum_{k=0}^{\infty}\frac{t^{k}}{(1-zx)^{k}(1-x^{2})^{{k \choose 2}}}
\end{align*}
\end{cor}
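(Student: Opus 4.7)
The plan is to obtain this corollary as the specialization $y=0$ of Theorem \ref{t-Ipkdesfix}. The right-hand side is easy: setting $y=0$ collapses each of the factors $(1+zxy)^{k}$, $(1+x^{2}y)^{k^{2}}$, and $(1-x^{2}y^{2})^{\binom{k+1}{2}}$ to $1$, immediately yielding $\sum_{k=0}^{\infty} t^{k}/((1-zx)^{k}(1-x^{2})^{\binom{k}{2}})$, which matches the claimed right-hand side.

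On the left-hand side, the factor $1/(1+y)$ becomes $1$ and $(1+yt)/(1-t)$ becomes $1/(1-t)$, so the prefactor of $I_{n}^{(\pk,\des,\fix)}$ reduces to $1/(1-t)^{n+1}$. For the arguments of the polynomial itself, setting $y=0$ in $(1+y)^{2}t/((y+t)(1+yt))$ gives $t/t = 1$, while $(y+t)/(1+yt)$ becomes $t$. Recalling that
\[
I_{n}^{(\pk,\des,\fix)}(Y,T,z) = \sum_{\pi \in \mathfrak{I}_{n}} Y^{\pk(\pi)+1} T^{\des(\pi)+1} z^{\fix(\pi)},
\]
specialization at $Y=1$, $T=t$ kills the peak statistic and yields exactly $I_{n}^{(\des,\fix)}(t,z)$. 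Combining these simplifications on both sides produces the stated identity.

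There is no real obstacle here beyond careful bookkeeping; the only point worth flagging is that the $y \to 0$ specialization is legitimate at the level of formal power series in $x$, $t$, and $z$, since although the substitution $Y = (1+y)^{2}t/((y+t)(1+yt))$ involves a ratio, it simplifies to $1$ before any formal indeterminate is evaluated. As a consistency check, the resulting identity is precisely formula (5.5) of D\'esarm\'enien--Foata and formula (7.3) of Gessel--Reutenauer, as noted just before the statement.
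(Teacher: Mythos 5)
Your proof is correct and is exactly the paper's route: the corollary is obtained by specializing Theorem \ref{t-Ipkdesfix} at $y=0$, which the paper states without further detail. Your bookkeeping of the prefactor, the arguments $(Y,T)\to(1,t)$, and the collapse of the right-hand side all check out.
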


Our next theorem gives formulas relating $I_{n}^{(\pk,\fix)}(t,z)$ and $I_{n}^{(\des,\fix)}(t,z)$ to Eulerian polynomials.

\begin{thm}
\label{t-eul-inv}
Let $n\ge1$. Then
\leqnomode
\begin{align*}
\tag{a}
\frac{1}{2}\left(\frac{1+t}{1-t}\right)^{n+1}I_{n}^{(\pk,\fix)}\left(\frac{4t}{(1+t)^{2}},z\right) & = \sum_{k=1}^n a_{n,k}(z) 2^k \frac{A_k(t)}{(1-t)^{k+1}}
\end{align*}
where 
\begin{equation*}
1+\sum_{n=1}^\infty x^n\sum_{k=1}^n a_{n,k}(z) w^k   
=\left(\frac{1+zx}{1-zx}\right)^{w/2}\left(\frac{1+x^2}{1-x^2}\right)^{w^2/4},
\end{equation*}
and 
\begin{equation*}
\tag{b}
\frac{I_{n}^{(\des,\fix)}(t,z)}{(1-t)^{n+1}} = \sum_{k=1}^n b_{n,k}(z) \frac{A_k(t)}{(1-t)^{k+1}}
\end{equation*}
where
\begin{equation*}
1+\sum_{n=1}^\infty x^n \sum_{k=1}^n b_{n,k}(z) w^k= \frac{1}{(1-zx)^w(1-x^2)^{\binom{w}{2}}}.
\end{equation*}

\end{thm}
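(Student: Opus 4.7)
The plan is to derive both parts by combining the generalized Theorem \ref{t-pkdesst} (and its $\des$-analogue) with the plethystic evaluation of $Q_I$ from Lemma \ref{l-QIpls}, then matching the resulting generating functions against the defining formulas of $a_{n,k}(z)$ and $b_{n,k}(z)$ via the Eulerian identity $\sum_{k=0}^\infty k^j t^k = A_j(t)/(1-t)^{j+1}$. The key structural observation is that specializing Lemma \ref{l-QIpls} at $\alpha = -1$ (for part (a)) or $\alpha = 0$ (for part (b)) produces precisely the specializations $w\mapsto 2k$ or $w\mapsto k$ of the defining generating functions for the $a_{n,k}(z)$ and $b_{n,k}(z)$.

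For part (a), I would apply Theorem \ref{t-pkdesst} with $\Pi = \mathfrak{I}_n$, $\st = \fix$, and $y = 1$. The LHS becomes $\tfrac{1}{2}\bigl(\tfrac{1+t}{1-t}\bigr)^{n+1} I_n^{(\pk,\fix)}\bigl(\tfrac{4t}{(1+t)^2},z\bigr)$ (since the $(y+t)/(1+yt)$ factor collapses to $1$ at $y=1$), while the RHS is $\sum_{k\ge 0}\Theta_{1,k}(Q^{\fix}(\mathfrak{I}_n))t^k$. Summing over $n$ and using Lemma \ref{l-QIpls} at $\alpha = -1$, together with the key identity $\binom{k}{2} + \binom{k+1}{2} = k^2$, yields
\begin{equation*}
\sum_{n=0}^\infty x^n \Theta_{1,k}(Q^{\fix}(\mathfrak{I}_n)) = \left(\frac{1+zx}{1-zx}\right)^{k}\left(\frac{1+x^2}{1-x^2}\right)^{k^2}.
\end{equation*}
This is exactly the substitution $w = 2k$ in the defining generating function for $a_{n,k}(z)$, so extracting $[x^n]$ gives $\Theta_{1,k}(Q^{\fix}(\mathfrak{I}_n)) = \sum_{j=1}^n a_{n,j}(z)\, 2^j k^j$ for $n\ge 1$. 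Summing against $t^k$ and invoking $\sum_k k^j t^k = A_j(t)/(1-t)^{j+1}$ gives part (a).

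For part (b), the same mechanism applies with $y=0$: Theorem \ref{t-pkdesst} (specialized to eliminate the peak weight, since $(y+t)/(1+yt) = t$ and $(1+y)^2 t/((y+t)(1+yt)) = 1$ at $y=0$) yields $I_n^{(\des,\fix)}(t,z)/(1-t)^{n+1} = \sum_{k\ge 0}\Theta_{0,k}(Q^{\fix}(\mathfrak{I}_n))t^k$. Setting $\alpha = 0$ in Lemma \ref{l-QIpls} gives the cleaner
\begin{equation*}
\sum_{n=0}^\infty x^n \Theta_{0,k}(Q^{\fix}(\mathfrak{I}_n)) = \frac{1}{(1-zx)^{k}(1-x^2)^{\binom{k}{2}}},
\end{equation*}
which is the $w = k$ specialization of the defining generating function for $b_{n,k}(z)$. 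Extracting $[x^n]$ and applying the Eulerian identity as before yields part (b).

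The only non-routine point is justifying the substitution $w\mapsto 2k$ (respectively $w\mapsto k$): one must check that $[x^n]$ of each of those two bivariate generating functions is genuinely a polynomial in $w$, so that the evaluation at nonnegative integers recovers it unambiguously. This is immediate from the logarithmic forms
\begin{equation*}
\left(\frac{1+zx}{1-zx}\right)^{w/2}\left(\frac{1+x^2}{1-x^2}\right)^{w^2/4} = \exp\Bigl(w\sum_{k\ \text{odd}}\tfrac{(zx)^k}{k} + \tfrac{w^2}{2}\sum_{k\ \text{odd}}\tfrac{x^{2k}}{k}\Bigr)
\end{equation*}
and the analogous expansion in the other case using $\binom{w}{2} = w(w-1)/2$, both of which have coefficients in $\mathbb{Q}[w,z]$. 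With this observation in place, everything else is bookkeeping, and there is no substantial obstacle beyond recognizing the $\binom{k}{2}+\binom{k+1}{2}=k^2$ collapse that makes the $\alpha=-1$ evaluation match the prescribed generating function for the $a_{n,k}$.
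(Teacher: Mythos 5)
Your argument is correct, and it takes a genuinely different (though closely related) route from the paper's. The paper first expands $\sum_n Q^{\fix}(\mathfrak{I}_n)x^n$ as an explicit exponential in the power sums (equation \eqref{e-QI}) and then invokes the final equalities of Theorem \ref{t-pkdes} (b) and (c) in their $\st$-refined form, whose content is that the coefficient polynomials in $w$ arise from the symbolic substitutions $p_i\mapsto w$ for $i$ odd and $p_i\mapsto 0$ for $i$ even (for peaks), respectively $p_i\mapsto w$ for all $i$ (for descents). You instead reuse Lemma \ref{l-QIpls}, already proved for Theorem \ref{t-Ipkdesfix}, evaluate at $\alpha=-1$ and $\alpha=0$, and recognize the resulting closed forms as the $w=2k$ and $w=k$ specializations of the defining generating functions for $a_{n,k}(z)$ and $b_{n,k}(z)$. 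The two computations are secretly identical, since $\Theta_{1,k}$ is precisely the substitution $p_i\mapsto 2k$ ($i$ odd), $p_i\mapsto 0$ ($i$ even), and $\Theta_{0,k}$ is $p_i\mapsto k$; but your version is more economical given what Section 5 has already established, whereas the paper's power-sum expansion does double duty (it is reused for Theorem \ref{t-IpkdesAx} and, via the exponential formula, yields the combinatorial interpretation of $a_{n,k}$ and $b_{n,k}$ in terms of cycles of $\pi^2$). The one point where your route requires an argument the paper's does not is the interpolation step---that $[x^n]$ of each bivariate generating function is genuinely a polynomial in $w$ with no constant term and degree at most $n$, so that its values at the nonnegative integers $2k$ (resp.\ $k$) determine it---and you handle this correctly via the logarithmic forms.
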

\begin{proof}
We first expand $Q_I$ into power sum symmetric functions. By Lemma \ref{l-Qinvalt}, we have
\begin{align}
Q_I &= H[zxe_1+x^2e_2]\notag\\
   &=H[zxp_1+x^2(p_1^2 -p_2)/2]\notag\\
   &=\exp\biggl(\sum_{n=1}^\infty z^nx^n \frac{p_n}{n} 
     + \sum_{n=1}^\infty x^{2n} \Bigl(\frac{p_n^2-p_{2n}}{2n}\Bigr)\biggr)\notag\\
   &=\exp\biggl(\sum_{n\text{ odd}} z^n x^n \frac{p_n}{n} 
     + \sum_{n\text{ even}}(z^n-1) x^n \frac{p_n}{n}
     +\sum_{n=1}^\infty x^{2n} \frac{p_n^2}{2n}\biggr).
\label{e-QI}
\end{align}
(Here $\sum_{n\text{ even}}$ indicates a sum over even positive values of $n$.)
A generalization of Theorem \ref{t-pkdes} (b) (analogous to Theorem \ref{t-pkdesst}) implies the desired formula for $I_{n}^{(\pk,\fix)}(t,z)$ with
\begin{align}
1+\sum_{n=1}^\infty x^n\sum_{k=1}^n a_{n,k}(z) w^k &= 
\exp\biggl(\sum_{n\text{ odd}} z^n x^n \frac{w}{n} 
     +\sum_{n\text{ odd}}x^{2n} \frac{w^2}{2n}\biggr)\label{e-ank}\\
 &=\exp\biggl(\frac{w}{2}\log\left(\frac{1+zx}{1-zx}\right)  
   +\frac{w^2}{4}\log\left(\frac{1+x^2}{1-x^2}\right) 
 \biggr)\notag\\
 &=\left(\frac{1+zx}{1-zx}\right)^{w/2}\left(\frac{1+x^2}{1-x^2}\right)^{w^2/4}.\notag
\end{align}
Similarly, by applying the analogous generalization of Theorem \ref{t-pkdes} (c) gives the formula for $I_{n}^{(\des,\fix)}(t,z)$ with
\begin{align*}
1+\sum_{n=1}^\infty x^n\sum_{k=1}^n b_{n,k}(z) w^k &= 
\exp\biggl(\sum_{n=1}^\infty z^n x^n \frac{w}{n} 
     +\sum_{n=1}^\infty x^{2n} \frac{w^2-w}{2n}\biggr)\\
 &=\exp\biggl(-w\log(1-zx)-\frac{w(w-1)}{2}\log(1-x^2)
 \biggr)\\
 &=\frac{1}{(1-zx)^w(1-x^2)^{\binom{w}{2}}}.\qedhere
\end{align*}
\end{proof}

The polynomials $a_{n,k}(z)$ and the numbers $b_{n,k}(1)$ have simple combinatorial interpretations.\footnote{We do not have a combinatorial interpretation for the polynomials $b_{n,k}(z)$, which have some negative coefficients.} By the exponential formula \cite[Corollary 5.1.9, p.~7]{Stanley2001}, the exponential generating function for permutations with cycles of length $n$ weighted $u_n$ is $\exp\bigl(\sum_{n=1}^\infty u_n {x^n}/{n}\bigr)$.
Thus, from \eqref{e-ank}, we see that $n!\, \sum_k a_{n,k}(z)w^k$ counts permutations in $\mathfrak{S}_n$ with no cycle lengths divisible by 4 in which odd cycles of length $m$ are weighted $z^m w$ and even cycles (which must have lengths congruent to 2 modulo 4) are weighted $w^2$. For $z=1$, we can restate this in a more elegant way. Note that the square of an odd cycle of length $m$ is an odd cycle of length $m$ and the square of an even cycle of length $m$ is a product of two cycles, each of length $m/2$. Thus, if we set $a_{n,k}\coloneqq a_{n,k}(1)$, then $n!\,a_{n,k}$ is the number of permutations $\pi$ in $\mathfrak{S}_n$ with no cycles having length divisible by 4  for which $\pi^2$ has $k$ cycles. Similarly---as can be seen most easily be setting $z=1$ and $p_n=w$ in \eqref{e-QI}---if we set $b_{n,k}\coloneqq b_{n,k}(1)$, we see that $n!\,b_{n,k}$ is the number of permutations $\pi$ in $\mathfrak{S}_n$ for which $\pi^2$ has $k$ cycles. Therefore, by specializing Theorem \ref{t-eul-inv} appropriately, we obtain the following formulas for the polynomials
\[
I_{n}^{\pk}(t)\coloneqq P^{\pk}(\mathfrak{I}_{n};t)=\sum_{\pi\in\mathfrak{I}_{n}}t^{\pk(\pi)+1} \quad \text{and} \quad I_{n}(t)\coloneqq A(\mathfrak{I}_{n};t)=\sum_{\pi\in\mathfrak{I}_{n}}t^{\des(\pi)+1}.
\]
\begin{cor}
\label{c-IpkA}
Let $n\ge1$. Then
\leqnomode
\begin{align*}
\tag{a}
\frac{1}{2}\left(\frac{1+t}{1-t}\right)^{n+1}I_{n}^{\pk}\left(\frac{4t}{(1+t)^{2}}\right)=\sum_{k=1}^{n}a_{n,k}2^{k}\frac{A_{k}(t)}{(1-t)^{k+1}}
\end{align*}
where $n!\,a_{n,k}$ is the number of permutations $\pi \in \mathfrak{S}_{n}$ with no cycles having length divisible by 4 and for which $\pi^{2}$ has $k$ cycles, and
\begin{equation*}
\tag{b}
\frac{I_{n}(t)}{(1-t)^{n+1}}=\sum_{k=1}^{n}b_{n,k}\frac{A_{k}(t)}{(1-t)^{k+1}}
\end{equation*}
where $n!\,b_{n,k}$ is the number of permutations $\pi \in \mathfrak{S}_{n}$ for which $\pi^{2}$ has $k$ cycles.
\end{cor}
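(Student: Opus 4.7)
The plan is to deduce the two formulas by setting $z=1$ in parts (a) and (b) of Theorem \ref{t-eul-inv}, and then to identify the combinatorial meaning of the coefficients $a_{n,k}=a_{n,k}(1)$ and $b_{n,k}=b_{n,k}(1)$ via the exponential formula together with a simple observation about the cycle structure of $\pi^{2}$. Since $I_{n}^{\pk}(t)=I_{n}^{(\pk,\fix)}(t,1)$ and $I_{n}(t)=I_{n}^{(\des,\fix)}(t,1)$, the displayed identities in (a) and (b) of the corollary are literally the $z=1$ specializations of Theorem \ref{t-eul-inv}. So the only substantive work is to recognize the arithmetic generating functions at $z=1$ as counting permutations of the stated kinds.

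For part (a), set $z=1$ in the generating function for the $a_{n,k}(z)$ and take logarithms to obtain
\begin{equation*}
\log\biggl(\Bigl(\tfrac{1+x}{1-x}\Bigr)^{w/2}\Bigl(\tfrac{1+x^{2}}{1-x^{2}}\Bigr)^{w^{2}/4}\biggr)
=\sum_{n\text{ odd}}\frac{w}{n}x^{n}+\sum_{n\text{ odd}}\frac{w^{2}}{2n}x^{2n}.
\end{equation*}
By the exponential formula, the exponential of this is the exponential generating function for permutations in which each odd cycle is weighted $w$, each cycle of length $\equiv 2\pmod 4$ is weighted $w^{2}$, and cycles of length divisible by $4$ are forbidden. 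The key combinatorial observation is that squaring a permutation preserves each odd cycle (as an odd cycle of the same length) and splits each cycle of even length $m$ into two cycles of length $m/2$; consequently, for a permutation $\pi$ of the kind just described, the total exponent of $w$ equals (number of odd cycles of $\pi$) $+\,2\cdot$(number of even cycles of $\pi$), which is precisely the number of cycles of $\pi^{2}$. This gives the claimed interpretation of $n!\,a_{n,k}$.

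For part (b), the same approach applies. Setting $z=1$ yields
\begin{equation*}
\log\biggl(\frac{1}{(1-x)^{w}(1-x^{2})^{\binom{w}{2}}}\biggr)
=w\sum_{n=1}^{\infty}\frac{x^{n}}{n}+\frac{w(w-1)}{2}\sum_{n=1}^{\infty}\frac{x^{2n}}{n}.
\end{equation*}
Collecting the coefficient of $x^{n}/n$, odd cycles acquire weight $w$ and each cycle of even length $2m$ acquires weight $w+w(w-1)=w^{2}$, over all permutations (no cycle lengths are forbidden this time). The same $\pi\mapsto\pi^{2}$ bookkeeping as in part (a) identifies the exponent of $w$ with the number of cycles of $\pi^{2}$, yielding the interpretation of $n!\,b_{n,k}$.

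No step here is really an obstacle: the two identities are immediate from Theorem \ref{t-eul-inv}, and everything else is a bookkeeping check once one notices how $\pi^{2}$ acts on cycles. The only thing to watch is carrying out the coefficient extraction in the even-cycle case of part (b) correctly so that the $w$ and $w(w-1)$ contributions combine cleanly into $w^{2}$; this is the single spot where a small arithmetic slip would spoil the clean combinatorial statement.
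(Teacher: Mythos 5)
Your proposal is correct and follows essentially the same route as the paper: specialize Theorem \ref{t-eul-inv} at $z=1$, then use the exponential formula together with the observation that squaring preserves odd cycles and splits an even cycle of length $m$ into two cycles of length $m/2$ to identify $n!\,a_{n,k}$ and $n!\,b_{n,k}$. Your coefficient extractions (including the $w+w(w-1)=w^2$ combination for even cycles in part (b)) check out and match the paper's argument.
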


A formula equivalent to Corollary \ref{c-IpkA} (b) was given by Athanasiadis \cite[Proposition 2.22]{Athanasiadis2018}. Athanasiadis's proof involves computations with irreducible characters of symmetric groups.

We can use Corollary \ref{c-IpkA} to obtain an analogue of Corollary \ref{c-CpkdesPpkdes} for involutions. The proof is omitted as it is essentially the same as the proof of
Corollary \ref{c-CpkdesPpkdes}.

\begin{cor}
\label{c-IpkPpk}
Let $n\geq1$. Then
\[
I_{n}^{\pk}(t)=\sum_{k=1}^{n}a_{n,k}(1-t)^{(n-k)/2}P_{k}^{\pk}(t)
\]
with $a_{n,k}$ as in Corollary \ref{c-IpkA}.
\end{cor}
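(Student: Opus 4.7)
The plan is to mirror the proof of Corollary \ref{c-CpkdesPpkdes} almost verbatim, using Corollary \ref{c-IpkA} (a) as the starting point in place of Corollary \ref{c-cycpk} (a). Substituting Stembridge's identity \eqref{e-Apk} into each summand on the right-hand side of Corollary \ref{c-IpkA} (a) cancels the factor $2^{k}$ against $((1+t)/2)^{k+1}$ and turns every $A_{k}$ into $P_{k}^{\pk}$. After dividing both sides by the common prefactor $\tfrac12((1+t)/(1-t))^{n+1}$, this yields
\[
I_{n}^{\pk}\!\left(\frac{4t}{(1+t)^{2}}\right)=\sum_{k=1}^{n}a_{n,k}\left(\frac{1-t}{1+t}\right)^{n-k}P_{k}^{\pk}\!\left(\frac{4t}{(1+t)^{2}}\right).
\]

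Next I would invert the substitution $s=4t/(1+t)^{2}$. The identity $1-s=((1-t)/(1+t))^{2}$ shows that $((1-t)/(1+t))^{n-k}$ pulls back to $(1-s)^{(n-k)/2}$; renaming $s$ back to $t$ then produces the claimed formula. The only subtle point is that $(n-k)/2$ must be a nonnegative integer whenever the summand is nonzero, so that both sides remain genuine polynomials in $t$ and no extraneous square roots appear.

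The one step requiring care is therefore this parity check: I claim $a_{n,k}=0$ unless $n-k$ is even. By the combinatorial description of $a_{n,k}$ given just before Corollary \ref{c-IpkA}, any $\pi\in\mathfrak{S}_{n}$ counted by $a_{n,k}$ has each cycle of length either odd or congruent to $2\pmod{4}$. An odd cycle of length $m$ squares to an odd cycle of length $m$, contributing $m-1$ (even) to $n-k$; a cycle of length $m\equiv 2\pmod{4}$ splits under squaring into two cycles of length $m/2$, contributing $m-2\equiv 0\pmod{4}$ to $n-k$. Summing over the cycles of $\pi$ shows $n-k$ is always even. With this parity fact in hand, the square-root inversion above is legitimate and the corollary follows.
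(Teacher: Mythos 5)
Your proposal is correct and follows exactly the route the paper intends: the paper omits this proof precisely because it is the same as that of Corollary \ref{c-CpkdesPpkdes}, namely substituting Stembridge's identity \eqref{e-Apk} into Corollary \ref{c-IpkA} (a), cancelling the prefactor, and inverting $t\mapsto 4t/(1+t)^{2}$ via $1-s=\left(\frac{1-t}{1+t}\right)^{2}$. Your parity argument for why $a_{n,k}$ vanishes unless $n\equiv k\pmod 2$ is also the justification the paper gives (in the remark following the corollary) for the absence of square roots.
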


We note that for $a_{n,k}$ to be nonzero, $n$ and $k$ must have the same parity, i.e., $(n-k)/2$ must be an integer. Thus the above formula does not contain any square roots.

Next, we derive a formula for the polynomials
\[
I_{n}^{(\pk,\des)}(y,t)\coloneqq P^{(\pk,\des)}(\mathfrak{I}_{n};y,t)=\sum_{\pi\in\mathfrak{I}_{n}}y^{\pk(\pi)+1}t^{\des(\pi)+1}.
\]
Below, we let $\lambda^2=(\lambda^2_1,\lambda^2_2,\dots)$ be the cycle type of $\pi^2$ for any permutation $\pi$ of cycle type $\lambda$. We denote the parts of the partition $\lambda^2$ by $\lambda^2_1, \lambda^2_2,\dots$ (so $\lambda^2_k$ is not the same as $(\lambda_k)^2$).

\begin{thm}
\label{t-IpkdesAx} For $n\geq1$, we have
\begin{multline*}
\qquad\quad
\frac{1}{1+y}\left(\frac{1+yt}{1-t}\right)^{n+1}I_{n}^{(\pk,\des)}\left(\frac{(1+y)^{2}t}{(y+t)(1+yt)},\frac{y+t}{1+yt}\right)\\
=\sum_{\lambda\vdash n}\frac{A_{l(\lambda^2)}(t)}{z_\lambda(1-t)^{l(\lambda^2)+1}}\prod_{k=1}^{l(\lambda^2)}(1-(-y)^{\lambda_k^2}).
\qquad\quad
\end{multline*}
\end{thm}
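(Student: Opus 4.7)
The plan is to apply Theorem \ref{t-pkdes} (a) with $\Pi = \mathfrak{I}_n$, after identifying the power sum expansion of $Q(\mathfrak{I}_n)$ in a form that exposes the cycle type of $\pi^2$.

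First, I would specialize equation \eqref{e-QI} at $z = 1$ (noting that the $(z^n - 1)$ factors for even $n$ vanish) to obtain
\begin{equation*}
\sum_{n=0}^{\infty} Q(\mathfrak{I}_n)\, x^n
  = \exp\Big(\sum_{n\text{ odd}} x^n \frac{p_n}{n} + \sum_{n=1}^{\infty} x^{2n}\frac{p_n^2}{2n}\Big).
\end{equation*}
Rewriting the exponent as $\sum_{k \geq 1} w_k x^k / k$, where $w_k = p_k$ for $k$ odd and $w_k = p_{k/2}^2$ for $k$ even, Lemma \ref{l-expsum} gives $Q(\mathfrak{I}_n) = \sum_{\lambda \vdash n} z_\lambda^{-1}\prod_{k} w_{\lambda_k}$. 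The key combinatorial observation is that $\prod_k w_{\lambda_k} = p_{\lambda^2}$: each odd part $\lambda_k$ contributes the factor $p_{\lambda_k}$, and each even part $\lambda_k = 2m$ contributes $p_m^2$, which is precisely how an even cycle of length $2m$ splits into two cycles of length $m$ under squaring. Hence
\begin{equation*}
Q(\mathfrak{I}_n) = \sum_{\lambda \vdash n} \frac{p_{\lambda^2}}{z_\lambda}.
\end{equation*}

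Next, I would substitute this expansion into Theorem \ref{t-pkdes} (a). Both sides of that theorem are linear in the power sum expansion of $Q(\Pi)$, so the formula may be applied term by term to the (non-canonical) decomposition above; the term $p_{\lambda^2}/z_\lambda$ contributes
\begin{equation*}
\frac{1}{z_\lambda} \cdot \frac{A_{l(\lambda^2)}(t)}{(1-t)^{l(\lambda^2)+1}} \prod_{k=1}^{l(\lambda^2)} \bigl(1-(-y)^{\lambda_k^2}\bigr),
\end{equation*}
and summing over $\lambda \vdash n$ yields exactly the right-hand side of Theorem \ref{t-IpkdesAx}.

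The only substantive step is the identification $\prod_k w_{\lambda_k} = p_{\lambda^2}$; once this is in hand, the result follows mechanically from linearity and the machinery already in place. As a sanity check, setting $y = 0$ recovers Corollary \ref{c-IpkA} (b): the coefficient of $A_k(t)/(1-t)^{k+1}$ becomes $\sum_{\lambda \vdash n,\; l(\lambda^2) = k} 1/z_\lambda$, which equals $b_{n,k}$ since $n!/z_\lambda$ counts the permutations of cycle type $\lambda$.
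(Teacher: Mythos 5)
Your proposal is correct and follows essentially the same route as the paper: specialize \eqref{e-QI} at $z=1$, apply Lemma \ref{l-expsum} to get $Q(\mathfrak{I}_n)=\sum_{\lambda\vdash n}p_{\lambda^2}/z_\lambda$ via the observation that odd parts square to themselves and even parts split in two, then feed this into Theorem \ref{t-pkdes} (a) by linearity. Your explicit remark that linearity justifies applying the theorem to a non-canonical decomposition, and the $y=0$ sanity check, are nice touches but do not change the argument.
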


\begin{proof} First we set $z=1$ in (\ref{e-QI}), which yields
\begin{align*}
Q_I|_{z=1} &=\exp\biggl(\sum_{n\text{ odd}}  x^n \frac{p_n}{n} 
     +\sum_{n=1}^\infty x^{2n} \frac{p_n^2}{2n}\biggr)\\
     &=\exp\biggl(\sum_{n\text{ odd}} x^n \frac{p_n}{n} 
     +\sum_{n\text{ even}} x^{n} \frac{p_{n/2}^2}{n}\biggr).
\end{align*}
By Lemma \ref{l-expsum}, we have
\begin{equation}
\label{e-qdef}
Q(\mathfrak{I}_{n}) = \sum_{\lambda\vdash n}\frac{q_\lambda}{z_\lambda}
\end{equation}
where $q_\lambda=\prod_{k=1}^{l(\lambda)} q_{\lambda_k}$ with 
$q_n=p_n$ if $n$ is odd  and $q_n = p_{n/2}^2$ if $n$ is even. Since the square of a cycle of odd length $n$ is a cycle of length $n$, and the square of a cycle of even length $n$ is a product of two cycles of length $n/2$, it follows that $q_\lambda=p_{\lambda^2}$.
Thus we may rewrite \eqref{e-qdef} as
\begin{equation}
\label{e-QI2}
Q(\mathfrak{I}_{n}) =\sum_{\lambda\vdash n}\frac{p_{\lambda^2}}{z_\lambda}.
\end{equation}
and the desired formula follows from Theorem \ref{t-pkdes} (a) and \eqref{e-QI2}.
\end{proof}

Theorem \ref{t-IpkdesAx} can also be proven using the approach of Athanasiadis (see the proof of \cite[Proposition 2.22]{Athanasiadis2018}). Moreover, we note that by setting $y=1$ and $y=0$, Theorem \ref{t-IpkdesAx} can be used to obtain Corollary \ref{c-IpkA}.

Finally, we note the following symmetry result on the joint distribution of $\pk$ and $\des$ over $\mathfrak{I}_{n}$, similar to Theorem \ref{t-compsym1}. 
\begin{thm}
\label{t-compsym2}The number of involutions of length $n$ with $j$
peaks and $k$ descents is equal to the number of involutions of length
$n$ with $j$ peaks and $n-1-k$ descents.
\end{thm}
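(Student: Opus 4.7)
The plan is to mimic the proof of Theorem \ref{t-compsym1}: first establish the analogue of Lemma \ref{l-compsym1} for involutions, and then combine with the reverse-complement bijection in exactly the same way.

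For the descent-set symmetry, I would use RSK. Under RSK, involutions in $\mathfrak{I}_{n}$ correspond bijectively to standard Young tableaux of size $n$ (with $\pi$ mapping to $P$ whenever $\mathrm{RSK}(\pi)=(P,P)$), and this bijection preserves descent sets, since $\Des(\pi)=\Des(Q)$ in general with $P=Q$ for involutions. So it suffices to show that the number of SYT of size $n$ with descent set $D\subseteq[n-1]$ equals the number with descent set $D^{c}$. The transpose map $T\mapsto T^{T}$ provides this bijection: a short case analysis on the relative positions of $i$ and $i+1$ in $T$ shows that $i\in\Des(T)$ (that is, $i+1$ lies in a strictly lower row of $T$ than $i$) if and only if $i\notin\Des(T^{T})$ (that is, $i+1$ does not lie in a strictly later column of $T$ than $i$). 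Hence $\Des(T^{T})=\Des(T)^{c}$, and transpose gives the required bijection on SYT, so the number of involutions with descent set $D$ equals the number with descent set $D^{c}$.

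The rest of the argument is the proof of Theorem \ref{t-compsym1} almost verbatim. Conjugation by the decreasing permutation $n(n-1)\cdots 1$ preserves cycle type, hence the involution property, as well as the descent number, while toggling peaks and valleys. Therefore the number of involutions in $\mathfrak{I}_{n}$ with $j$ peaks and $k$ descents equals the number with $j$ valleys and $k$ descents. Combining this with the descent-set symmetry from the previous paragraph and the observation (already used in the proof of Theorem \ref{t-compsym1}) that a permutation with descent set $D$ has $j$ valleys and $k$ descents if and only if a permutation with descent set $D^{c}$ has $j$ peaks and $n-1-k$ descents, we conclude that this count also equals the number of involutions in $\mathfrak{I}_{n}$ with $j$ peaks and $n-1-k$ descents.

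The main obstacle is the descent-set symmetry step: unlike in Theorem \ref{t-compsym1}, involution cycle types $(1^{m_{1}}2^{m_{2}})$ do not satisfy the hypotheses of Lemma \ref{l-compsym1} (the parts equal to $2$ are congruent to $2$ modulo $4$, and the odd part $1$ may be repeated), so one cannot simply quote Lemma \ref{l-compsym1}. Passing through RSK and the classical identity $\Des(T^{T})=\Des(T)^{c}$ is what makes the argument work; everything else parallels the proof of Theorem \ref{t-compsym1}. As a sanity check, one could also derive the descent-set symmetry from the identity $Q(\mathfrak{I}_{n})=\sum_{\lambda\vdash n}s_{\lambda}$ (which is also a consequence of RSK) together with $\omega(s_{\lambda})=s_{\lambda^{T}}$ and the fact that $\omega$ sends $F_{L}$ to the fundamental quasisymmetric function indexed by the complementary composition.
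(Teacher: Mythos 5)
Your proposal is correct and follows essentially the same route as the paper: the paper's proof of Theorem \ref{t-compsym2} is verbatim the proof of Theorem \ref{t-compsym1} with Lemma \ref{l-compsym1} replaced by Lemma \ref{l-compsym2} (the descent-set complementation symmetry for involutions), which is exactly your structure, and you correctly identify that the hypotheses of Lemma \ref{l-compsym1} fail for involution cycle types so a separate lemma is needed. The only difference is that you supply a proof of that lemma via RSK and the identity $\Des(T^{T})=\Des(T)^{c}$ on standard Young tableaux, whereas the paper simply cites it as Theorem 4.2 of Gessel and Reutenauer; your RSK argument is a standard and valid proof of that fact.
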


The proof of Theorem \ref{t-compsym2} is the same as that of Theorem \ref{t-compsym1}, but uses the next lemma in place of Lemma \ref{l-compsym1}.

\begin{lem}
\label{l-compsym2}The number of involutions of length $n$ with descent set $D\subseteq[n-1]$ is equal to the number of involutions of length $n$ with the complementary descent set $D^{c}$.
\end{lem}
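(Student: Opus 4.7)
The plan is to deduce the lemma from the $\omega$-invariance of $Q(\mathfrak{I}_n)$, where $\omega\colon\Lambda\to\Lambda$ is the standard algebra involution characterized by $\omega(h_n)=e_n$ (equivalently $\omega(p_k)=(-1)^{k-1}p_k$). I will combine this with two well-known properties: that $\omega$ is self-adjoint with respect to $\langle\cdot\,,\cdot\rangle$, and that $\omega(r_L)=r_{L^c}$, where $L^c$ denotes the composition of $n$ with $\Des(L^c)=[n-1]\setminus\Des(L)$.

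The main step is to show that $\omega(Q(\mathfrak{I}_n))=Q(\mathfrak{I}_n)$. Starting from the power-sum expansion \eqref{e-QI2}, $Q(\mathfrak{I}_n)=\sum_{\lambda\vdash n}p_{\lambda^2}/z_\lambda$, and using $\omega(p_{\lambda^2})=(-1)^{n-l(\lambda^2)}p_{\lambda^2}$, it suffices to verify that $n-l(\lambda^2)$ is always even. From the description of $\lambda^2$---each odd part $k$ of $\lambda$ contributes a single part $k$ to $\lambda^2$, while each even part $2m$ contributes two parts of size $m$---we obtain $l(\lambda^2)=2l(\lambda)-o(\lambda)$. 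Since the even parts of $\lambda$ sum to an even number, $n\equiv o(\lambda)\pmod 2$, and hence $n-l(\lambda^2)=n+o(\lambda)-2l(\lambda)$ is even, as desired.

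With $\omega$-invariance in hand, the argument concludes quickly. Given $D\subseteq[n-1]$, let $L$ be the composition with $\Des(L)=D$ and let $L^c$ be as above, so $\Des(L^c)=D^c$. Combining Theorem \ref{t-QrL}, the $\omega$-invariance of $Q(\mathfrak{I}_n)$, the self-adjointness of $\omega$, and $\omega(r_L)=r_{L^c}$ yields
\begin{align*}
\#\{\pi\in\mathfrak{I}_n:\Des(\pi)=D\}
&=\langle Q(\mathfrak{I}_n),r_L\rangle
=\langle \omega(Q(\mathfrak{I}_n)),\omega(r_L)\rangle\\
&=\langle Q(\mathfrak{I}_n),r_{L^c}\rangle
=\#\{\pi\in\mathfrak{I}_n:\Des(\pi)=D^c\}.
\end{align*}
The only ingredient beyond direct symmetric-function bookkeeping is the identity $\omega(r_L)=r_{L^c}$, which follows from $\omega(s_{\lambda/\mu})=s_{\lambda'/\mu'}$ applied to ribbon skew shapes together with the fact that reversing a composition leaves $r_L$ unchanged; this is the main technical input but presents no real obstacle.
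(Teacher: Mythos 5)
Your argument is correct, but it is worth noting that the paper does not actually prove Lemma \ref{l-compsym2}: it simply cites it as Theorem 4.2 of Gessel--Reutenauer and points to later bijective proofs by Strehl, Dukes, and Steinhardt. So you have supplied a genuine self-contained proof where the paper has only a citation, and your route is essentially the classical symmetric-function argument (which is also, in substance, how Gessel and Reutenauer originally proved it). All the ingredients check out: $\omega$ is an isometry of the Hall scalar product, $\omega(r_L)=r_{L^c}$ follows from $\omega(s_{\lambda/\mu})=s_{\lambda'/\mu'}$ together with the $180^\circ$-rotation invariance of ribbon skew Schur functions, and the parity computation is right, since each odd part $k$ of $\lambda$ contributes one part to $\lambda^2$ while each even part contributes two, giving $l(\lambda^2)=2l(\lambda)-o(\lambda)$, and $n\equiv o(\lambda)\pmod 2$ because the even parts sum to an even number; hence $\omega(p_{\lambda^2})=p_{\lambda^2}$ term by term in \eqref{e-QI2} and $Q(\mathfrak{I}_n)$ is $\omega$-invariant. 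One small remark: the expansion \eqref{e-QI2} appears in the paper only inside the proof of Theorem \ref{t-IpkdesAx}, which precedes this lemma in the text, so there is no circularity, but if you wanted the proof to be fully independent of that theorem you could instead derive $\omega$-invariance directly from $Q(\mathfrak{I}_n)=\sum_{\lambda} L_\lambda$ summed over partitions of $n$ with all parts at most $2$. As written, your proof is complete and correct.
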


Lemma \ref{l-compsym2} is Theorem 4.2 of Gessel and Reutenauer \cite{Gessel1993}. An unpublished bijective proof was given by Strehl; see Dukes \cite{Dukes2007} and Steinhardt \cite{Steinhardt2010} for subsequent bijective proofs.

\subsection{Counting involutions by left peaks, descents, and fixed points}

We now give an analogue of Theorem \ref{e-Ipkdesfix} for the polynomials
\[
I_{n}^{(\lpk,\des,\fix)}(y,t,z)\coloneqq P^{(\lpk,\des,\fix)}(\mathfrak{I}_{n};y,t,z)=\sum_{\pi\in\mathfrak{I}_{n}}y^{\lpk(\pi)}t^{\des(\pi)}z^{\fix(\pi)}.
\]
\begin{thm}
\label{t-Ilpkdesfix} 
\begin{multline*}
\qquad
\frac{1}{1-t}+\sum_{n=1}^{\infty}\frac{(1+yt)^{n}}{(1-t)^{n+1}}I_{n}^{(\lpk,\des,\fix)}\left(\frac{(1+y)^{2}t}{(y+t)(1+yt)},\frac{y+t}{1+yt},z\right)x^{n}\\
=\sum_{k=0}^{\infty}\frac{(1+zxy)^{k}(1+x^{2}y)^{k^{2}+k}t^{k}}{(1-zx)^{k+1}(1-x^{2})^{{k+1 \choose 2}}(1-x^{2}y^{2})^{{k+1 \choose 2}}}.
\qquad
\end{multline*}
\end{thm}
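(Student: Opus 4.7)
The plan is to follow the proof template of Theorem \ref{t-Ipkdesfix}, replacing Theorem \ref{t-pkdesst} by its left-peak analogue, Theorem \ref{t-lpkdesst}. Applying Theorem \ref{t-lpkdesst} with $\st=\fix$ to each $\mathfrak{I}_n$, multiplying by $x^n$, and summing over $n\ge 1$, the sum on the left-hand side of the theorem equals
\[
\sum_{n=1}^\infty \sum_{k=0}^\infty \Theta_{y,k}\bigl(Q^{\fix}(\mathfrak{I}_n)[X+1]\bigr)\, t^k x^n.
\]
The remaining $\frac{1}{1-t}$ on the left accounts for the $n=0$ contribution, since $1[X+1]=1$ gives $\Theta_{y,k}(1)=1$ and $\sum_{k\ge 0}t^k=\frac{1}{1-t}$. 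Because $Q_I = \sum_{n\ge 0} Q^{\fix}(\mathfrak{I}_n)\, x^n$, the entire left-hand side of the theorem is thus $\sum_{k=0}^\infty \Theta_{y,k}\bigl(Q_I[X+1]\bigr)\, t^k$, so the problem reduces to computing the plethystic substitution $Q_I[X+1][k(1-\alpha)]$ and evaluating at $\alpha=-y$.

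For the plethystic computation, I start from Lemma \ref{l-Qinvalt}: $Q_I = H[zx e_1 + x^2 e_2]$. Working either via monic terms (Theorem \ref{t-monic}) or via $p_n[X+1]=p_n+1$, I find $e_1[X+1]=X+1$ and $e_2[X+1] = e_2 + X$, hence
\[
Q_I[X+1] = H\bigl[zx(X+1) + x^2(e_2 + X)\bigr].
\]
By the associativity of plethystic composition, applying $[k(1-\alpha)]$ produces $H[g]$, where $g$ is obtained by the ring homomorphism $[k(1-\alpha)]$ using $p_1[k(1-\alpha)] = k(1-\alpha)$ and $p_2[k(1-\alpha)] = k(1-\alpha^2)$:
\[
g = zx(k+1-k\alpha) + \tfrac{x^2}{2}\bigl[(k-k\alpha)^2 - (k-k\alpha^2) + 2(k-k\alpha)\bigr].
\]

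The main obstacle is simplifying the coefficient of $x^2$ in $g$. The key identity here is
\[
(k-k\alpha)^2 - (k-k\alpha^2) + 2(k-k\alpha) = k(k+1)(1-\alpha)^2,
\]
which one verifies by pulling out $(1-\alpha)$ twice; this identity is precisely what produces the exponents $k+1$, $k^2+k$, and $\binom{k+1}{2}$ in the target formula. Once this simplification is made, $g$ is a sum of integer multiples of monic terms, and repeated application of Lemma \ref{l-Hps}\,(a)--(c) together with Lemma \ref{l-HE}\,(c) yields
\[
Q_I[X+1][k(1-\alpha)] = \frac{(1-zx\alpha)^{k}(1-x^2\alpha)^{k^2+k}}{(1-zx)^{k+1}(1-x^2)^{\binom{k+1}{2}}(1-x^2\alpha^2)^{\binom{k+1}{2}}}.
\]
Setting $\alpha=-y$ and summing over $k$ with weight $t^k$ gives the right-hand side of the theorem.
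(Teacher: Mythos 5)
Your proposal is correct and follows essentially the same route as the paper: reduce via Theorem \ref{t-lpkdesst} (with $\st=\fix$) to computing $\sum_{k}\Theta_{y,k}(Q_{I}[X+1])t^{k}$, and then evaluate the plethysm $Q_{I}[X+1][k(1-\alpha)]$ exactly as in the paper's Lemma \ref{l-QIplsp1} before setting $\alpha=-y$. Your factorization of the $x^{2}$-coefficient as $k(k+1)(1-\alpha)^{2}$ is just a tidier way of organizing the term-by-term expansion the paper writes out explicitly, and it yields the same decomposition into monic terms.
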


The proof of Theorem \ref{t-Ilpkdesfix} proceeds in the same way
as the proof for Theorem \ref{t-Ipkdesfix}, but we use Theorem \ref{t-lpkdesst}
in place of Theorem \ref{t-pkdesst} and we use the next lemma in
place of Lemma \ref{l-QIpls}; the details are omitted.
\begin{lem}
\label{l-QIplsp1}
\[
Q_{I}[X+1][k(1-\alpha)]=\frac{(1-zx\alpha)^{k}(1-x^{2}\alpha)^{k^{2}+k}}{(1-zx)^{k+1}(1-x^{2})^{{k+1 \choose 2}}(1-x^{2}\alpha^{2})^{{k+1 \choose 2}}}.
\]
\end{lem}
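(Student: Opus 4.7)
The plan is to reduce this to Lemma \ref{l-QIpls} by first computing $Q_I[X+1]$ in closed form. Since $X+1$ is a sum of the monic terms $1, x_1, x_2, \ldots$, Theorem \ref{t-monic} tells us that $Q_I[X+1]$ is obtained from the product expression for $Q_I$ by adjoining a new variable equal to $1$. This adjunction contributes an extra factor of $(1-zx)^{-1}$ coming from the $i=0$ term of $\prod_{i}(1-zxx_i)^{-1}$, and an extra factor of $H(x^2) = \prod_i(1-x^2 x_i)^{-1}$ coming from the terms $0=i<j$ of $\prod_{i<j}(1-x^2 x_i x_j)^{-1}$. The upshot is
\[
Q_I[X+1] = \frac{H(x^2)}{1-zx}\, Q_I.
\]

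Next I apply $[k(1-\alpha)]$ to both sides. Since this operation is an algebra homomorphism on the ambient ring and the prefactor $(1-zx)^{-1}$ contains no power sums in the $x_i$ (so is preserved), it factors as
\[
Q_I[X+1][k(1-\alpha)] = \frac{1}{1-zx}\cdot H(x^2)[k(1-\alpha)] \cdot Q_I[k(1-\alpha)].
\]
Lemma \ref{l-HE}(d) evaluates the middle factor as $(1-x^2\alpha)^k/(1-x^2)^k$, and Lemma \ref{l-QIpls} gives the closed form for $Q_I[k(1-\alpha)]$. Multiplying these three contributions and using the identity $k + \binom{k}{2} = \binom{k+1}{2}$ to combine exponents on $(1-x^2)$ and on $(1-x^2\alpha)$ produces the claimed right-hand side.

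The main obstacle is justifying Step 1: one must check that adjoining a new variable equal to $1$ in the product form is indeed the effect of the $[X+1]$ substitution. This follows from the monic-term interpretation of plethysm (Theorem \ref{t-monic}), but a little care is needed because $Q_I$ is not a finite-degree symmetric function, so Theorem \ref{t-monic} should be applied coefficient-wise in the formal variable $x$. An alternative that sidesteps this point is to start from $Q_I = H[zxe_1 + x^2 e_2]$ (Lemma \ref{l-Qinvalt}) and compute $(zxe_1+x^2e_2)[X+1] = zx + (zx+x^2)e_1 + x^2e_2$, then apply $[k(1-\alpha)]$ using $p_1[k(1-\alpha)]=k(1-\alpha)$ and $p_2[k(1-\alpha)]=k(1-\alpha^2)$, and finish by applying $H[\cdot]$ via Lemma \ref{l-Hps}. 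This alternative mimics the proof of Lemma \ref{l-QIpls} verbatim but with a few more terms to track, and gives the same answer.
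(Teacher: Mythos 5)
Your argument is correct, but your primary route differs from the paper's. The paper proves this lemma exactly the way you sketch in your closing ``alternative'': it starts from $Q_I=H[zxe_1+x^2e_2]$, computes $(zxe_1+x^2e_2)[X+1][k(1-\alpha)]=(k+1)zx-kzx\alpha+\binom{k+1}{2}x^2+\binom{k+1}{2}x^2\alpha^2-(k^2+k)x^2\alpha$, and then evaluates $H$ of that sum via Lemmas \ref{l-Hps} and \ref{l-HE}~(c), in exact parallel with the proof of Lemma \ref{l-QIpls}. Your main route instead establishes the structural identity $Q_I[X+1]=\tfrac{H(x^2)}{1-zx}\,Q_I$ by adjoining a variable equal to $1$ in the product form of $Q_I$ (equivalently, from $(zxe_1+x^2e_2)[X+1]=zx+(zx+x^2)e_1+x^2e_2$ together with Lemma \ref{l-Hps}), and then multiplies the already-known evaluation from Lemma \ref{l-QIpls} by $\tfrac{1}{1-zx}\cdot\tfrac{(1-x^2\alpha)^k}{(1-x^2)^k}$, using $k+\binom{k}{2}=\binom{k+1}{2}$. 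Both are valid; the factorization step is justified because plethysm into $k(1-\alpha)$ is multiplicative and fixes series free of the $x_i$ (the paper uses the same kind of manipulation when computing $\Theta_{y,k}$ of the quotient defining $Q$ in Section \ref{s-fixder}), and the coefficient-wise application of Theorem \ref{t-monic} in the formal variable $x$, which you rightly flag, is unproblematic since each coefficient of $x^n$ is a genuine degree-$n$ symmetric function. What your route buys is economy—Lemma \ref{l-QIpls} is reused rather than recomputed, and the identity $Q_I[X+1]=\tfrac{H(x^2)}{1-zx}Q_I$ isolates cleanly what the $[X+1]$ substitution does; what the paper's route buys is uniformity, since the two lemmas then have structurally identical proofs resting only on the basic plethysm toolkit rather than on the infinite-product presentation of $Q_I$.
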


\begin{proof}
First, observe that
\begin{align*}
(zxe_{1}+x^{2}e_{2})[X+1] & =zxp_{1}[X+1]+\frac{x^{2}}{2}(p_{1}^{2}-p_{2})[X+1]\\
 & =zx(p_{1}+1)+\frac{x^{2}}{2}((p_{1}+1)^{2}-(p_{2}+1))\\
 & =zx(p_{1}+1)+\frac{x^{2}}{2}(p_{1}^{2}+2p_{1}-p_{2}).
\end{align*}
Then, following the proof of Lemma \ref{l-QIpls}, we have 
\begin{align*}
 & (zxe_{1}+x^{2}e_{2})[X+1][k(1-\alpha)]=\Big(zx(p_{1}+1)+\frac{x^{2}}{2}(p_{1}^{2}+2p_{1}-p_{2})\Big)[k(1-\alpha)]\\
 & \qquad\qquad\qquad\qquad\qquad=zx(k(1-\alpha)+1)+\frac{x^{2}}{2}(k^{2}(1-\alpha)^{2}+2k(1-\alpha)-k(1-\alpha^{2}))\\
 & \qquad\qquad\qquad\qquad\qquad=(k+1)zx-kzx\alpha+{k+1 \choose 2}x^{2}+{k+1 \choose 2}x^{2}\alpha^{2}-(k^{2}+k)x^{2}\alpha
\end{align*}
and thus 
\begin{align*}
 & Q_{I}[X+1][k(1-\alpha)]=H[zxe_{1}+x^{2}e_{2}][X+1][k(1-\alpha)]\\
 & \qquad\qquad\qquad\qquad=H\left[(k+1)zx-kzx\alpha+{k+1 \choose 2}x^{2}+{k+1 \choose 2}x^{2}\alpha^{2}-(k^{2}+k)x^{2}\alpha\right]\\
 & \qquad\qquad\qquad\qquad=\frac{H[zx]^{k+1}H[x^{2}]^{{k+1 \choose 2}}H[x^{2}\alpha^{2}]^{{k+1 \choose 2}}}{H[zx\alpha]^{k}H[x^{2}\alpha]^{k^{2}+k}}\\
 & \qquad\qquad\qquad\qquad=\frac{(1-zx\alpha)^{k}(1-x^{2}\alpha)^{k^{2}+k}}{(1-zx)^{k+1}(1-x^{2})^{{k+1 \choose 2}}(1-x^{2}\alpha^{2})^{{k+1 \choose 2}}}.\qedhere
\end{align*}
\end{proof}
Define
\[
I_{n}^{(\lpk,\fix)}(t,z)\coloneqq\sum_{\pi\in\mathfrak{I}_{n}}t^{\lpk(\pi)}z^{\fix(\pi)}.
\]
Specializing Theorem \ref{t-Ilpkdesfix} at $y=1$ yields the following
corollary.
\begin{cor}
\begin{align*}
\frac{1}{1-t}+\sum_{n=1}^{\infty}\frac{(1+t)^{n}}{(1-t)^{n+1}}I_{n}^{(\lpk,\fix)}\left(\frac{4t}{(1+t)^{2}},z\right)x^{n} & =\sum_{k=0}^{\infty}\frac{(1+zx)^{k}(1+x^{2})^{k^{2}+k}t^{k}}{(1-zx)^{k+1}(1-x^{2})^{k^{2}+k}}
\end{align*}
\end{cor}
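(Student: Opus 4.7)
The plan is simply to specialize Theorem \ref{t-Ilpkdesfix} at $y=1$ and simplify. Since the corollary is stated for the joint distribution of $\lpk$ and $\fix$ alone (i.e., with $\des$ unrecorded), the correct specialization is $y=1$, which forces the second argument $(y+t)/(1+yt)$ of $I_n^{(\lpk,\des,\fix)}$ to equal $1$, thereby setting the $\des$ variable to $1$ and summing over all descent numbers.

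On the left-hand side, at $y=1$ the factor $(1+yt)^n$ becomes $(1+t)^n$, the first argument $(1+y)^2 t/((y+t)(1+yt))$ becomes $4t/(1+t)^2$, and the second argument becomes $1$. Since
\[
I_{n}^{(\lpk,\des,\fix)}(u,1,z)=\sum_{\pi\in\mathfrak{I}_{n}}u^{\lpk(\pi)}z^{\fix(\pi)}=I_{n}^{(\lpk,\fix)}(u,z),
\]
we recover exactly the left-hand side of the corollary.

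On the right-hand side, substituting $y=1$ gives $(1+zx)^k$ and $(1+x^2)^{k^2+k}$ in the numerator, and in the denominator the two factors $(1-x^2)^{\binom{k+1}{2}}$ and $(1-x^2y^2)^{\binom{k+1}{2}}$ combine to
\[
(1-x^2)^{2\binom{k+1}{2}}=(1-x^2)^{k(k+1)}=(1-x^2)^{k^2+k},
\]
which together with the unchanged $(1-zx)^{k+1}$ produces precisely the right-hand side of the corollary. There is no real obstacle here: the entire proof is a routine verification that the five factors in Theorem \ref{t-Ilpkdesfix} transform correctly under the substitution $y=1$.
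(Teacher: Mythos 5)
Your proposal is correct and matches the paper's own (one-line) proof, which simply specializes Theorem \ref{t-Ilpkdesfix} at $y=1$; your verification that $2\binom{k+1}{2}=k^2+k$ merges the two denominator factors is exactly the simplification needed.
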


We also state an analogue of Theorem \ref{t-eul-inv} for left peaks. We omit the proof as it is similar to that of Theorem \ref{t-eul-inv}.

\begin{thm}
\label{t-lpk-fix-B}
Let $n\geq1$. Then
\[
\frac{(1+t)^{n}}{(1-t)^{n+1}}I_{n}^{(\lpk,\fix)}\left(\frac{4t}{(1+t)^{2}},z\right)=
\sum_{k=0}^n d_{n,k}(z) \frac{B_k(t)}{(1-t)^{k+1}}
\]
where
\begin{equation*}
1+\sum_{n=1}^\infty x^n \sum_{k=0}^n d_{n,k}(z) w^k= 
   \frac{1}{(1-x^4)^{1/4}}
  \left( \frac{1+x^2}{1-x^2}\right)^{w^2/4}\left(\frac{1+zx}{1-zx}\right)^{w/2}
  \left(\frac{1-x^2}{1-z^2x^2}\right)^{1/2}.
\end{equation*}
\end{thm}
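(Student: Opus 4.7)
The plan is to follow the template of Theorem~\ref{t-eul-inv}(a), but using the left-peak machinery of Theorem~\ref{t-lpkdes}(b) in place of Theorem~\ref{t-pkdes}(b). The first step is to upgrade Theorem~\ref{t-lpkdes}(b) to carry an extra statistic $\st$: the same scalar-product computation that yielded the final equality of Theorem~\ref{t-lpkdes}(b) goes through verbatim with $Q^{\st}(\Pi)$ in place of $Q(\Pi)$, giving
\[
\frac{(1+t)^{n}}{(1-t)^{n+1}}P^{(\lpk,\st)}\!\left(\Pi;\frac{4t}{(1+t)^{2}},z\right)
= \sum_{k=0}^{n} d_{n,k}(z)\,\frac{B_{k}(t)}{(1-t)^{k+1}},
\]
where $\sum_{k=0}^{n} d_{n,k}(z) w^{k}$ is obtained from $Q^{\st}(\Pi)$ by the substitution $p_i \mapsto w$ for $i$ odd and $p_i \mapsto 1$ for $i$ even. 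I would then specialize this to $\Pi = \mathfrak{I}_n$ and $\st = \fix$, so that the generating function $1 + \sum_{n\ge1} x^n \sum_k d_{n,k}(z) w^k$ can be read off by performing the above substitution directly on the power sum expansion \eqref{e-QI} of $Q_I = \sum_n Q^{\fix}(\mathfrak{I}_n) x^n$.

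The main computation is then the explicit evaluation of \eqref{e-QI} under this substitution. Using the elementary identities $\sum_{n\text{ odd}} u^n / n = \tfrac{1}{2}\log\bigl((1+u)/(1-u)\bigr)$ and $\sum_{n\ge1} u^n / n = -\log(1-u)$, each of the three sums inside the exponential in \eqref{e-QI} becomes a closed-form logarithm. The $\sum_{n\text{ odd}} z^n x^n p_n/n$ term contributes $\tfrac{w}{2}\log\bigl((1+zx)/(1-zx)\bigr)$; the $\sum_{n\text{ even}}(z^n-1) x^n p_n/n$ term collapses (since $p_n \mapsto 1$ for even $n$) to $\tfrac{1}{2}\log\bigl((1-x^2)/(1-z^2 x^2)\bigr)$; and the $\sum_{n\ge1} x^{2n} p_n^2/(2n)$ term splits on the parity of $n$ into $\tfrac{w^2}{4}\log\bigl((1+x^2)/(1-x^2)\bigr)$ plus $-\tfrac{1}{4}\log(1-x^4)$. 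Exponentiating the sum of these four pieces yields exactly the right-hand side of the generating function claimed for $1+\sum_{n=1}^\infty x^n \sum_{k=0}^n d_{n,k}(z) w^k$.

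The only real obstacle is bookkeeping: one must carefully track how the parity conditions interact across the three summands in \eqref{e-QI}, and in particular notice that it is the even-$n$ part of the second summand that produces the $(1-x^2)^{1/2}(1-z^2 x^2)^{-1/2}$ factor, while the even-$n$ part of the third summand is the source of the $(1-x^4)^{-1/4}$ factor. No new plethystic lemmas or combinatorial arguments are required beyond those already used in Theorems~\ref{t-eul-inv} and~\ref{t-lpkdes}.
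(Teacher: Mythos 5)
Your proposal is correct and is exactly the argument the paper intends: the paper omits the proof of Theorem~\ref{t-lpk-fix-B}, stating only that it is similar to that of Theorem~\ref{t-eul-inv}, and your route---upgrading Theorem~\ref{t-lpkdes}(b) to carry the statistic $\fix$ in the manner of Theorem~\ref{t-lpkdesst}, then performing the substitution $p_i\mapsto w$ ($i$ odd), $p_i\mapsto 1$ ($i$ even) on the power sum expansion \eqref{e-QI}---is precisely that argument. Your logarithmic bookkeeping, including the parity split of the $\sum_n x^{2n}p_n^2/(2n)$ term into the $(1+x^2)^{w^2/4}(1-x^2)^{-w^2/4}$ and $(1-x^4)^{-1/4}$ factors, reproduces the stated generating function correctly.
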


As with Theorem \ref{t-eul-inv}, we have a simple combinatorial interpretation when $z=1$: if we set $d_{n,k}\coloneqq d_{n,k}(1)$, then $n!\, d_{n,k}$ is the number of permutations $\pi$ in $\mathfrak{S}_n$ for which $\pi^2$ has $k$ odd cycles (i.e., cycles of odd length). Thus we have the following formula for the polynomials 
\[
I_{n}^{\lpk}(t)\coloneqq P^{\lpk}(\mathfrak{I}_{n};t)=\sum_{\pi\in\mathfrak{I}_{n}}t^{\lpk(\pi)}.
\]
\begin{cor}
\label{c-IlpkA}
Let $n\geq1$. Then
\[
\frac{(1+t)^{n}}{(1-t)^{n+1}}I_{n}^{\lpk}\left(\frac{4t}{(1+t)^{2}}\right)=\sum_{k=0}^n d_{n,k} \frac{B_k(t)}{(1-t)^{k+1}}
\]
where $n!\, d_{n,k}$ is the number of permutations $\pi \in \mathfrak{S}_n$ for which $\pi^2$ has $k$ odd cycles.
\end{cor}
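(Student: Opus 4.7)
The plan is to deduce Corollary \ref{c-IlpkA} from Theorem \ref{t-lpk-fix-B} by specializing at $z=1$ and then identifying the resulting coefficients $d_{n,k}=d_{n,k}(1)$ combinatorially via the exponential formula, in the same spirit as the treatment of $a_{n,k}$ and $b_{n,k}$ in the discussion following Theorem \ref{t-eul-inv}.

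First, I would set $z=1$ in Theorem \ref{t-lpk-fix-B}: the factor $\bigl((1-x^2)/(1-z^2x^2)\bigr)^{1/2}$ collapses to $1$, and $(1+zx)/(1-zx)$ becomes $(1+x)/(1-x)$. Writing $d_{n,k}\coloneqq d_{n,k}(1)$, the generating function simplifies to
\[
1+\sum_{n=1}^\infty x^n \sum_{k=0}^n d_{n,k}\, w^k = \frac{1}{(1-x^4)^{1/4}}\left(\frac{1+x^2}{1-x^2}\right)^{w^2/4}\left(\frac{1+x}{1-x}\right)^{w/2}.
\]
Combined with the $z=1$ case of Theorem \ref{t-lpk-fix-B}, this already proves the displayed identity in Corollary \ref{c-IlpkA}; what remains is to verify the claimed combinatorial meaning of $d_{n,k}$.

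To do that, I would compute via the exponential formula the bivariate generating function $\sum_{n\ge 0}\tfrac{x^n}{n!}\sum_{\pi\in\mathfrak{S}_n} w^{\#\{\text{odd cycles of }\pi^2\}}$. The square of a cycle of odd length $m$ is a single $m$-cycle, while the square of a cycle of even length $m$ is a product of two $(m/2)$-cycles. Hence the contribution of an $m$-cycle of $\pi$ to the count of odd cycles of $\pi^2$ is $1$ if $m$ is odd, $2$ if $m\equiv 2\pmod 4$, and $0$ if $m\equiv 0\pmod 4$. Weighting each $m$-cycle by $w$, $w^2$, or $1$ accordingly, the exponential formula yields
\[
\exp\!\biggl(w\!\sum_{m\text{ odd}}\frac{x^m}{m}+w^2\!\!\!\sum_{m\equiv 2\,(4)}\!\frac{x^m}{m}+\!\sum_{m\equiv 0\,(4)}\!\frac{x^m}{m}\biggr).
\]

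Finally, I would evaluate the three logarithmic series using the standard identities $\sum_{m\text{ odd}} x^m/m = \tfrac12\log\tfrac{1+x}{1-x}$, $\sum_{m\equiv 2\,(4)} x^m/m = \tfrac14\log\tfrac{1+x^2}{1-x^2}$, and $\sum_{m\equiv 0\,(4)} x^m/m = -\tfrac14\log(1-x^4)$, and then exponentiate. The resulting product matches the right-hand side of the formula for $\sum_n x^n \sum_k d_{n,k} w^k$ exactly, so comparing coefficients of $x^n w^k$ shows that $n!\, d_{n,k}$ counts permutations $\pi\in\mathfrak{S}_n$ whose square has exactly $k$ odd cycles. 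There is no real obstacle here; the only bit of care is in sorting cycles correctly modulo $4$ so that the three contributions line up with the three algebraic factors appearing in the $z=1$ specialization.
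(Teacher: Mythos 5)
Your proposal is correct and follows essentially the same route as the paper: specialize Theorem \ref{t-lpk-fix-B} at $z=1$ and identify $d_{n,k}(1)$ combinatorially via the exponential formula and the fact that squaring sends an odd $m$-cycle to an $m$-cycle and an even $m$-cycle to two $(m/2)$-cycles. The paper leaves this verification implicit (``As with Theorem \ref{t-eul-inv}\dots''), and your computation of the three logarithmic series sorted by residue of the cycle length modulo $4$ is exactly the check it intends.
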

Corollary \ref{c-IlpkA} leads to an expression for $I_{n}^{\lpk}(t)$ in terms of the ordinary left peak polynomials $P_{n}^{\lpk}(t)$. The proof is similar to that of Corollary \ref{c-ClpkPlpk}.

\begin{cor}
\label{c-IlpkPlpk}
Let $n\geq1$. Then
\[
I_{n}^{\lpk}(t)=\sum_{k=0}^n d_{n,k}
(1-t)^{(n-k)/2}P_{k}^{\lpk}(t)
\]
with $d_{n,k}$ as in Corollary \ref{c-IlpkA}.
\end{cor}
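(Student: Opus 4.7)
The plan is to follow the template laid out in the proof of Corollary \ref{c-ClpkPlpk}: start from the $B_k$-expansion of Corollary \ref{c-IlpkA}, substitute Petersen's identity $B_{k}(t)=(1+t)^{k}P_{k}^{\lpk}\bigl(4t/(1+t)^{2}\bigr)$ into the right-hand side, and then invert the change of variables $s=4t/(1+t)^{2}$.

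Concretely, first I would replace each $B_k(t)$ in Corollary \ref{c-IlpkA} by $(1+t)^{k}P_{k}^{\lpk}\bigl(4t/(1+t)^{2}\bigr)$, giving
\[
\frac{(1+t)^{n}}{(1-t)^{n+1}}I_{n}^{\lpk}\!\left(\frac{4t}{(1+t)^{2}}\right)=\sum_{k=0}^{n}d_{n,k}\,\frac{(1+t)^{k}}{(1-t)^{k+1}}P_{k}^{\lpk}\!\left(\frac{4t}{(1+t)^{2}}\right).
\]
Clearing the prefactor on the left yields
\[
I_{n}^{\lpk}\!\left(\frac{4t}{(1+t)^{2}}\right)=\sum_{k=0}^{n}d_{n,k}\left(\frac{1-t}{1+t}\right)^{\!n-k}P_{k}^{\lpk}\!\left(\frac{4t}{(1+t)^{2}}\right).
\]

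Next I would invert the substitution. Setting $s=4t/(1+t)^{2}$, a direct calculation gives $1-s=(1-t)^{2}/(1+t)^{2}$, so that $(1-t)/(1+t)=\sqrt{1-s}$ as a formal power series. Substituting $s$ for the argument on both sides therefore produces
\[
I_{n}^{\lpk}(s)=\sum_{k=0}^{n}d_{n,k}\,(1-s)^{(n-k)/2}P_{k}^{\lpk}(s),
\]
which is the desired identity after renaming $s$ back to $t$.

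The only point that requires a separate remark is that the exponents $(n-k)/2$ must be integers, since otherwise the statement would contain square roots. This follows from the combinatorial interpretation given after Theorem \ref{t-lpk-fix-B}: $n!\,d_{n,k}$ counts permutations $\pi\in\mathfrak{S}_{n}$ whose square has exactly $k$ odd cycles, and the odd cycles of $\pi^{2}$ consist of the odd cycles of $\pi$ together with pairs of cycles arising from the cycles of $\pi$ of length $\equiv 2\pmod 4$. Hence the number of odd cycles of $\pi^{2}$ has the same parity as the number of odd parts of the cycle type of $\pi$, which in turn has the same parity as $n$. Consequently $d_{n,k}=0$ unless $n\equiv k\pmod 2$, so all exponents appearing with nonzero coefficient are integers. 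This parity check is the only nontrivial ingredient; the rest is a mechanical adaptation of the argument used for $C_{n}^{\lpk}$.
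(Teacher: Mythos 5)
Your proposal is correct and follows essentially the same route as the paper, which omits the proof but indicates it proceeds as in Corollary \ref{c-ClpkPlpk}: substitute Petersen's identity $B_{k}(t)=(1+t)^{k}P_{k}^{\lpk}\bigl(4t/(1+t)^{2}\bigr)$ into Corollary \ref{c-IlpkA}, cancel the prefactor, and invert the substitution $t\mapsto 4t/(1+t)^2$ using $(1-t)/(1+t)=\sqrt{1-s}$. Your parity argument showing $d_{n,k}=0$ unless $n\equiv k\pmod 2$ is also exactly the observation the paper makes in the remark following the corollary, so no square roots appear.
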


For $d_{n,k}$ to be nonzero, $n$ and $k$ must have the same parity, so that $(n-k)/2$ must be an integer. Therefore, like the formula in Corollary \ref{c-IpkPpk}, the above formula does not contain any square roots.

\subsection{Counting involutions by up-down runs and fixed points}

We conclude this section with analogous results for counting involutions
by the number of up-down runs. Define
\[
I_{n}^{(\udr,\fix)}(t,z)\coloneqq P^{(\udr,\fix)}(\mathfrak{I}_{n};t,z)=\sum_{\pi\in\mathfrak{I}_{n}}t^{\udr(\pi)}z^{\fix(\pi)}
\]
and
\[
I_{n}^{\udr}(t)\coloneqq P^{\udr}(\mathfrak{I}_{n};t)=\sum_{\pi\in\mathfrak{I}_{n}}t^{\udr(\pi)}.
\]
\begin{thm}
\label{t-Iudrfix}
\begin{multline*}
\quad
\frac{1}{1-t}+\frac{1}{2(1-t)^{2}}\sum_{n=1}^{\infty}\frac{(1+t^{2})^{n}}{(1-t^{2})^{n-1}}I_{n}^{(\udr,\fix)}\left(\frac{2t}{1+t^{2}},z\right)x^{n}\\
=\sum_{k=0}^{\infty}\frac{(1+zx)^{k}(1+x^{2})^{k^{2}}t^{2k}}{(1-zx)^{k}(1-x^{2})^{k^{2}}}\left(1+\frac{(1+x^{2})^{k}t}{(1-zx)(1-x^{2})^{k}}\right).
\quad
\end{multline*}
\end{thm}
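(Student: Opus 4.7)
The plan is to mirror the strategy used for Theorems \ref{t-Ipkdesfix} and \ref{t-Ilpkdesfix}, but now invoke Theorem \ref{t-udrst} in place of Theorems \ref{t-pkdesst} and \ref{t-lpkdesst}. Specifically, applying Theorem \ref{t-udrst} with $\Pi = \mathfrak{I}_n$ and $\st = \fix$, multiplying by $x^n$ and summing over $n \geq 1$, yields
\begin{equation*}
\sum_{n=1}^{\infty}\frac{(1+t^{2})^{n}}{2(1-t)^{2}(1-t^{2})^{n-1}}I_{n}^{(\udr,\fix)}\!\!\left(\frac{2t}{1+t^{2}},z\right)x^{n}
=\sum_{k=0}^{\infty}S_k\, t^{2k}+\sum_{k=0}^{\infty}T_k\, t^{2k+1},
\end{equation*}
where $S_k := \sum_{n\geq 1}\Theta_{1,k}(Q^{\fix}(\mathfrak{I}_n))x^n$ and $T_k := \sum_{n\geq 1}\Theta_{1,k}(Q^{\fix}(\mathfrak{I}_n)[X+1])x^n$, so that after adjusting for $n=0$ and adding $1/(1-t)$, we obtain the left-hand side of the theorem.

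Next I would evaluate the two series $S_k$ and $T_k$ in closed form by invoking Lemmas \ref{l-QIpls} and \ref{l-QIplsp1} and specializing $\alpha = -1$ (which is the definition of $\Theta_{1,k}$). From Lemma \ref{l-QIpls}, using the identity $\binom{k}{2}+\binom{k+1}{2}=k^{2}$, I get
\begin{equation*}
\sum_{n=0}^{\infty}\Theta_{1,k}(Q^{\fix}(\mathfrak{I}_n))x^n = \frac{(1+zx)^{k}(1+x^{2})^{k^{2}}}{(1-zx)^{k}(1-x^{2})^{k^{2}}},
\end{equation*}
and from Lemma \ref{l-QIplsp1}, using $2\binom{k+1}{2}=k^{2}+k$, I get
\begin{equation*}
\sum_{n=0}^{\infty}\Theta_{1,k}(Q^{\fix}(\mathfrak{I}_n)[X+1])x^n = \frac{(1+zx)^{k}(1+x^{2})^{k^{2}+k}}{(1-zx)^{k+1}(1-x^{2})^{k^{2}+k}}.
\end{equation*}

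Finally, substituting these into the sum and pulling out the common factor $\frac{(1+zx)^k(1+x^2)^{k^2}}{(1-zx)^k(1-x^2)^{k^2}}t^{2k}$ from the pair of terms indexed by $k$ produces exactly
\begin{equation*}
\sum_{k=0}^{\infty}\frac{(1+zx)^{k}(1+x^{2})^{k^{2}}t^{2k}}{(1-zx)^{k}(1-x^{2})^{k^{2}}}\left(1+\frac{(1+x^{2})^{k}t}{(1-zx)(1-x^{2})^{k}}\right),
\end{equation*}
which is the right-hand side of Theorem \ref{t-Iudrfix}. The main obstacle is a bookkeeping one rather than a conceptual one: correctly tracking the $n=0$ contribution (which supplies the isolated $1/(1-t)$ on the left), ensuring the exponent identities $\binom{k}{2}+\binom{k+1}{2}=k^{2}$ and $2\binom{k+1}{2}=k^{2}+k$ are applied at $\alpha=-1$, and verifying that the two infinite sums can be factored uniformly in $k$ to give the advertised closed form. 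No new plethystic lemmas are needed beyond those already established.
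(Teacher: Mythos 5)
Your proposal is correct and follows exactly the route the paper indicates for this result: apply Theorem \ref{t-udrst} to $\mathfrak{I}_n$ with $\st=\fix$, then evaluate the two $k$-indexed series via Lemmas \ref{l-QIpls} and \ref{l-QIplsp1} at $\alpha=-1$, with the $n=0$ terms supplying the $1/(1-t)$. The exponent bookkeeping ($\binom{k}{2}+\binom{k+1}{2}=k^2$ and $2\binom{k+1}{2}=k^2+k$) and the factoring of the common term are all as the paper intends, so this is a faithful filling-in of the omitted details.
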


\begin{thm}
\label{t-IudrA} Let $n\geq1$. Then
\begin{equation*}
\frac{(1+t^{2})^{n}}{2(1-t)^{2}(1-t^{2})^{n-1}}I_{n}^{\udr}\left(\frac{2t}{1+t^{2}}\right)
  =\sum_{k=1}^n a_{n,k} 2^{k}\frac{A_{k}(t^2)}{(1-t^{2})^{k+1}}
  +t\sum_{k=0}^n d_{n,k}\frac{B_{k}(t^{2})}{(1-t^{2})^{k+1}}
\end{equation*}
with $a_{n,k}$ and $d_{n,k}$ as in Corollaries \ref{c-IpkA} and \ref{c-IlpkA}.
\end{thm}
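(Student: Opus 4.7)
The plan is to specialize Theorem \ref{t-udr} directly to $\Pi = \mathfrak{I}_n$. Since $P^{\udr}(\mathfrak{I}_n;u) = I_n^{\udr}(u)$, the left side of Theorem \ref{t-udr} becomes exactly the left side of Theorem \ref{t-IudrA}, and the right side becomes
$$\sum_{k=1}^n a_k\, 2^k\, \frac{A_k(t^2)}{(1-t^2)^{k+1}} + t\sum_{k=0}^n d_k\, \frac{B_k(t^2)}{(1-t^2)^{k+1}},$$
where $\sum_k a_k w^k = q(w,0,w,0,\dots)$ and $\sum_k d_k w^k = q(w,1,w,1,\dots)$ for $q(p_1,p_2,\dots) = Q(\mathfrak{I}_n)$. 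The task thereby reduces to identifying the $a_k$ with the $a_{n,k}$ of Corollary \ref{c-IpkA} and the $d_k$ with the $d_{n,k}$ of Corollary \ref{c-IlpkA}.

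For the first identification, I would start from the power sum expansion of $Q_I\rvert_{z=1}$ obtained by setting $z=1$ in \eqref{e-QI}. The substitution $p_i \mapsto w$ for $i$ odd and $p_i\mapsto 0$ for $i$ even kills the middle exponential and forces $p_n^2\mapsto w^2$ for $n$ odd and $p_n^2\mapsto 0$ for $n$ even. The resulting generating function in $x$ and $w$ is precisely the $z=1$ specialization of \eqref{e-ank}, which is how $a_{n,k}$ is characterized in the proof of Theorem \ref{t-eul-inv} (a). For the second identification, the substitution $p_i\mapsto w$ for $i$ odd and $p_i\mapsto 1$ for $i$ even produces the same two terms as above plus an extra contribution $\sum_{n \text{ even}} x^{2n}/(2n) = \tfrac{1}{4}\sum_{m\ge 1}x^{4m}/m = -\tfrac{1}{4}\log(1-x^4)$. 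Exponentiating, this accounts for exactly the factor $1/(1-x^4)^{1/4}$ appearing in Theorem \ref{t-lpk-fix-B} at $z=1$ (where the factor $\bigl((1-x^2)/(1-z^2x^2)\bigr)^{1/2}$ becomes trivial), so the generating function matches the one defining $d_{n,k}$.

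The main obstacle is essentially bookkeeping: three different characterizations of the coefficient sequences $(a_{n,k})$ and $(d_{n,k})$ appear in Section~5 (one via Theorems \ref{t-pkdes} and \ref{t-lpkdes} applied to $\mathfrak{I}_n$, one via Theorems \ref{t-eul-inv} and \ref{t-lpk-fix-B}, and one via the combinatorial descriptions in Corollaries \ref{c-IpkA} and \ref{c-IlpkA}), and we must verify that all three agree. Each such verification is a short direct computation with generating functions, essentially reading off the relevant exponential from \eqref{e-QI}. Once these identifications are in place, no additional computation is required, and Theorem \ref{t-IudrA} follows from Theorem \ref{t-udr} in direct analogy to how Corollaries \ref{c-IpkA} and \ref{c-IlpkA} were extracted from Theorems \ref{t-pkdes} (b) and \ref{t-lpkdes} (b).
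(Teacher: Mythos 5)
Your proposal is correct and follows exactly the route the paper takes: the paper's (omitted) proof of Theorem \ref{t-IudrA} is precisely the specialization of Theorem \ref{t-udr} to $\Pi=\mathfrak{I}_n$, with the coefficients $a_k$ and $d_k$ matched to $a_{n,k}$ and $d_{n,k}$ via the $z=1$ power sum expansion \eqref{e-QI}. Your bookkeeping of the three characterizations---including the $-\tfrac14\log(1-x^4)$ term accounting for the factor $(1-x^4)^{-1/4}$ in Theorem \ref{t-lpk-fix-B}---checks out.
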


Theorem \ref{t-Iudrfix} is proved using Theorem \ref{t-udrst}, Lemma \ref{l-QIpls}, and Lemma \ref{l-QIplsp1}, whereas the proof of Theorem
\ref{t-IudrA} uses Theorem \ref{t-udr}. We omit the details.

\section{Fixed points and derangements}
\label{s-fixder}

Let $\mathfrak{D}_{n}$ denote the set of \textit{derangements}\textemdash permutations
with no fixed points\textemdash of length $n$. In this section, we
will prove formulas for the joint distribution of $\pk$, $\des$,
and $\fix$, the joint distribution of $\lpk$, $\des$, and $\fix$,
and the joint distribution of $\udr$ and $\fix$ over all permutations
and then specialize these results to the case of derangements. 

The quasisymmetric generating function for all permutations weighted
by length and number of fixed points is known to be
\begin{align*}
Q & \coloneqq\sum_{n=0}^{\infty}Q^{\fix}(\mathfrak{S}_{n})x^{n}=\frac{H(zx)}{H(x)(1-p_{1}x)};
\end{align*}
see the proof of \cite[Theorem 8.4]{Gessel1993}.

\subsection{Counting permutations by peaks, descents, and fixed points}

Let us define
\[
P_{n}^{(\pk,\des,\fix)}(y,t,z)\coloneqq P^{(\pk,\des,\fix)}(\mathfrak{S}_{n};y,t,z)=\sum_{\pi\in\mathfrak{S}_{n}}y^{\pk(\pi)+1}t^{\des(\pi)+1}z^{\fix(\pi)}
\]
and
\[
D_{n}^{(\pk,\des)}(y,t)\coloneqq P^{(\pk,\des)}(\mathfrak{D}_{n};y,t)=\sum_{\pi\in\mathfrak{D}_{n}}y^{\pk(\pi)+1}t^{\des(\pi)+1}.
\]
Our first theorem of this section provides generating function formulas
for these polynomials.
\begin{thm}
\label{t-pkdesfix} We have
\leqnomode
\begin{multline*}
\tag{a}
\qquad
\frac{1}{1-t}+\frac{1}{1+y}\sum_{n=1}^{\infty}\left(\frac{1+yt}{1-t}\right)^{n+1}P_{n}^{(\pk,\des,\fix)}\left(\frac{(1+y)^{2}t}{(y+t)(1+yt)},\frac{y+t}{1+yt},z\right)x^{n}\\
=\sum_{k=0}^{\infty}\frac{(1+zxy)^{k}}{(1-zx)^{k}}\frac{(1-x)^{k}}{(1+xy)^{k}}\frac{t^{k}}{1-k(1+y)x}
\qquad
\end{multline*}
and
\begin{multline*}
\tag{b}
\qquad
\frac{1}{1-t}+\frac{1}{1+y}\sum_{n=1}^{\infty}\left(\frac{1+yt}{1-t}\right)^{n+1}D_{n}^{(\pk,\des)}\left(\frac{(1+y)^{2}t}{(y+t)(1+yt)},\frac{y+t}{1+yt}\right)x^{n}\\
=\sum_{k=0}^{\infty}\frac{(1-x)^{k}}{(1+xy)^{k}}\frac{t^{k}}{1-k(1+y)x}.
\qquad
\end{multline*}
\end{thm}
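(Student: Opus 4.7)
The plan is to follow the template of the proof of Theorem~\ref{t-Ipkdesfix}: first compute $Q[k(1-\alpha)]\rvert_{\alpha=-y}$ explicitly, and then apply Theorem~\ref{t-pkdesst}. Part~(b) will follow from part~(a) by the specialization $z=0$, since $Q(\mathfrak{D}_n)=Q^{\fix}(\mathfrak{S}_n)\rvert_{z=0}$; the substitution $z=0$ kills the weight of every permutation with at least one fixed point.

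Because plethysm is a ring homomorphism in the first argument, I can compute $Q[k(1-\alpha)]$ factor by factor from $Q = H(zx)/\bigl(H(x)(1-p_1 x)\bigr)$. The first two factors are direct applications of Lemma~\ref{l-HE}(d):
\begin{equation*}
H(zx)[k(1-\alpha)] = \frac{(1-zx\alpha)^k}{(1-zx)^k}, \qquad H(x)[k(1-\alpha)] = \frac{(1-x\alpha)^k}{(1-x)^k}.
\end{equation*}
For the third factor, the auxiliary variable $x$ passes through the plethystic substitution as a coefficient, so $(1-p_1 x)[k(1-\alpha)] = 1 - p_1[k(1-\alpha)]\,x = 1 - k(1-\alpha)x$. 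Combining and evaluating at $\alpha=-y$ yields
\begin{equation*}
\Theta_{y,k}(Q) = \frac{(1+zxy)^k}{(1-zx)^k}\cdot\frac{(1-x)^k}{(1+xy)^k}\cdot\frac{1}{1-k(1+y)x},
\end{equation*}
which is exactly the general term on the right-hand side of part~(a).

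To assemble part~(a), I extract the coefficient of $x^n$: since $x$ is inert under this plethystic substitution, $\Theta_{y,k}(Q) = \sum_{n\ge 0}\Theta_{y,k}(Q^{\fix}(\mathfrak{S}_n))\,x^n$. The $n=0$ contribution is $\sum_k\Theta_{y,k}(1)\,t^k = 1/(1-t)$, accounting for the leading term on the left. For $n\ge 1$, Theorem~\ref{t-pkdesst} with $\Pi=\mathfrak{S}_n$ and $\st=\fix$ converts $\sum_k \Theta_{y,k}(Q^{\fix}(\mathfrak{S}_n))\,t^k$ into $\frac{1}{1+y}\bigl(\frac{1+yt}{1-t}\bigr)^{n+1}P_n^{(\pk,\des,\fix)}(\ldots)$, completing part~(a). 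Part~(b) then drops out by setting $z=0$: the factor $(1+zxy)^k/(1-zx)^k$ collapses to $1$, and $P_n^{(\pk,\des,\fix)}(\ldots,z)\rvert_{z=0}=D_n^{(\pk,\des)}(\ldots)$. The only genuine work is the plethystic computation in the second paragraph; all the remaining steps are formal, and I expect no substantive obstacle, the mild subtlety being the non-symmetric-function factor $1-p_1 x$, handled directly via $p_1[a]=a$.
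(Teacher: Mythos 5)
Your proposal is correct and follows essentially the same route as the paper: the paper likewise computes $\Theta_{y,k}(Q)$ factor by factor using Lemma \ref{l-HE}(d) (together with the homomorphism property of $f\mapsto f[k(1-\alpha)]$ for the factor $1-p_1x$), then applies Theorem \ref{t-pkdesst} to each coefficient of $x^n$, and obtains part (b) as the $z=0$ specialization of part (a). Your explicit handling of the $n=0$ term and of $(1-p_1x)[k(1-\alpha)]$ just spells out steps the paper leaves implicit.
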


\begin{proof}
First, note that 
\begin{align}
\sum_{k=0}^{\infty}\Theta_{y,k}(Q)t^{k} & =\sum_{k=0}^{\infty}\frac{\Theta_{y,k}(H(zx))}{\Theta_{y,k}(H(x))\Theta_{y,k}(1-p_{1}x)}t^{k}\nonumber \\
 & =\sum_{k=0}^{\infty}\frac{(1+zxy)^{k}}{(1-zx)^{k}}\frac{(1-x)^{k}}{(1+xy)^{k}}\frac{t^{k}}{1-k(1+y)x}\label{e-ThetaQ}
\end{align}
by Lemma \ref{l-HE} (d). Thus, we have 
\begin{align}
\sum_{k=0}^{\infty}\frac{(1+zxy)^{k}}{(1-zx)^{k}}\frac{(1-x)^{k}}{(1+xy)^{k}}\frac{t^{k}}{1-k(1+y)x} & =\sum_{k=0}^{\infty}\Theta_{y,k}(Q)t^{k}\nonumber \\
 & =\sum_{n=0}^{\infty}\sum_{k=0}^{\infty}\Theta_{y,k}(Q^{\fix}(\mathfrak{S}_{n}))t^{k}x^{n}\nonumber \\
 & =\frac{1}{1-t}+\sum_{n=1}^{\infty}\sum_{k=0}^{\infty}\Theta_{y,k}(Q^{\fix}(\mathfrak{S}_{n}))t^{k}x^{n}.\label{e-pkdesfix}
\end{align}
Applying Theorem \ref{t-pkdesst}, we obtain 
\begin{align*}
\frac{1}{1+y}\left(\frac{1+yt}{1-t}\right)^{n+1}P_{n}^{(\pk,\des,\fix)}\left(\frac{(1+y)^{2}t}{(y+t)(1+yt)},\frac{y+t}{1+yt},z\right) & =\sum_{k=0}^{\infty}\Theta_{y,k}(Q^{\fix}(\mathfrak{S}_{n}))t^{k};
\end{align*}
combining this with (\ref{e-pkdesfix}) yields part (a). Part (b)
is simply the $z=0$ specialization of part (a), so we are done.
\end{proof}
Now, define 
\[
P_{n}^{(\pk,\fix)}(t,z)\coloneqq\sum_{\pi\in\mathfrak{S}_{n}}t^{\pk(\pi)+1}z^{\fix(\pi)},\quad A_{n}^{\fix}(t,z)\coloneqq\sum_{\pi\in\mathfrak{S}_{n}}t^{\des(\pi)+1}z^{\fix(\pi)},
\]
\[
D_{n}^{\pk}(t)\coloneqq P^{\pk}(\mathfrak{D}_{n};t,z)=\sum_{\pi\in\mathfrak{D}_{n}}t^{\pk(\pi)+1},\quad\text{and}\quad D_{n}(t)\coloneqq A(\mathfrak{D}_{n};t)=\sum_{\pi\in\mathfrak{D}_{n}}t^{\des(\pi)+1}.
\]
We easily obtain the following generating function formulas for these
polynomials by specializing Theorem \ref{t-pkdesfix} appropriately. 
\begin{cor}
\label{c-pkfixdesfix} We have
\leqnomode
\begin{align*}
\tag{a}
\frac{1}{1-t}+\frac{1}{2}\sum_{n=1}^{\infty}\left(\frac{1+t}{1-t}\right)^{n+1}P_{n}^{(\pk,\fix)}\left(\frac{4t}{(1+t)^{2}},z\right)x^{n} & =\sum_{k=0}^{\infty}\frac{(1+zx)^{k}}{(1-zx)^{k}}\frac{(1-x)^{k}}{(1+x)^{k}}\frac{t^{k}}{1-2kx},
\end{align*}
\begin{align*}
\tag{b}
\frac{1}{1-t}+\frac{1}{2}\sum_{n=1}^{\infty}\left(\frac{1+t}{1-t}\right)^{n+1}D_{n}^{\pk}\left(\frac{4t}{(1+t)^{2}}\right)x^{n} & =\sum_{k=0}^{\infty}\frac{(1-x)^{k}}{(1+x)^{k}}\frac{t^{k}}{1-2kx},
\end{align*}
\begin{align*}
\tag{c}
\frac{1}{1-t}+\sum_{n=1}^{\infty}\frac{A_{n}^{\fix}(t,z)}{(1-t)^{n+1}}x^{n} & =\sum_{k=0}^{\infty}\frac{(1-x)^{k}}{(1-zx)^{k}}\frac{t^{k}}{1-kx},
\end{align*}
and
\begin{align*}
\tag{d}
\frac{1}{1-t}+\sum_{n=1}^{\infty}\frac{D_{n}(t)}{(1-t)^{n+1}}x^{n} & =\sum_{k=0}^{\infty}\frac{(1-x)^{k}t^{k}}{1-kx}.
\end{align*}
\end{cor}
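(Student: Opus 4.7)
The plan is to derive each of the four formulas as a direct specialization of Theorem \ref{t-pkdesfix}: parts (a) and (b) follow by setting $y=1$ in parts (a) and (b) of Theorem \ref{t-pkdesfix}, respectively, and parts (c) and (d) follow by setting $y=0$ in those same two parts.

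For the specialization $y=1$, the rational arguments appearing inside the $P$-polynomials on the left-hand side evaluate to
\[
\frac{(1+y)^{2}t}{(y+t)(1+yt)}\bigg\rvert_{y=1}=\frac{4t}{(1+t)^{2}}
\quad\text{and}\quad
\frac{y+t}{1+yt}\bigg\rvert_{y=1}=1.
\]
Since the second argument, which marks $\des$, collapses to $1$, the descent exponent drops out of every summand and $P_{n}^{(\pk,\des,\fix)}$ reduces to $P_{n}^{(\pk,\fix)}$ while $D_{n}^{(\pk,\des)}$ reduces to $D_{n}^{\pk}$. The prefactor $\frac{1}{1+y}\bigl(\frac{1+yt}{1-t}\bigr)^{n+1}$ becomes $\frac{1}{2}\bigl(\frac{1+t}{1-t}\bigr)^{n+1}$, and direct substitution of $y=1$ into the right-hand side of Theorem \ref{t-pkdesfix} (a) and (b) yields exactly the right-hand sides of Corollary \ref{c-pkfixdesfix} (a) and (b).

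For the specialization $y=0$, the same arguments become $1$ and $t$, so the peak marker is evaluated at $1$ (and disappears from the sum) while $t$ continues to track $\des$. Consequently $P_{n}^{(\pk,\des,\fix)}$ collapses to $A_{n}^{\fix}$ and $D_{n}^{(\pk,\des)}$ collapses to $D_{n}$, and the prefactor reduces to $(1-t)^{-(n+1)}$. On the right-hand sides of Theorem \ref{t-pkdesfix}, the factors $(1+zxy)^{k}$ and $(1+xy)^{k}$ both become $1$, yielding exactly the expressions in Corollary \ref{c-pkfixdesfix} (c) and (d).

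There is essentially no obstacle: the entire argument is a mechanical verification of substitutions. The only point to be careful about is that the peak marker is evaluated at $1$ (not $0$) under $y=0$, so that the generating function is specialized correctly rather than being sent to zero; once this is observed, both sides match term-by-term under each of the two specializations.
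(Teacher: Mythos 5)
Your proof is correct and matches the paper's own (one-line) justification, which obtains all four formulas "by specializing Theorem \ref{t-pkdesfix} appropriately"; you simply spell out the $y=1$ and $y=0$ substitutions explicitly, and all the evaluations you record are accurate.
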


Note that part (c) of Corollary \ref{c-pkfixdesfix} is the $q=1$
evaluation of a formula by Gessel and Reutenauer \cite[Equation (8.3)]{Gessel1993}.

We now give formulas which express $P_n^{(\pk,\fix)}(t,z)$ and $A_n^{\fix}(t,z)$ in terms of Eulerian polynomials. We omit the proof as it is similar to the proof of Theorem \ref{t-eul-inv}.
\begin{thm}
\label{t-pkdesfixA}
Let $n\ge1$. Then
\leqnomode
\begin{align*}
\tag{a}
\frac{1}{2}\left(\frac{1+t}{1-t}\right)^{n+1}P_n^{(\pk,\fix)}\left(\frac{4t}{(1+t)^{2}},z\right) & = \sum_{k=1}^n a_{n,k}(z) 2^k \frac{A_k(t)}{(1-t)^{k+1}}
\end{align*}
where 
\begin{equation*}
1+\sum_{n=1}^\infty x^n\sum_{k=1}^n a_{n,k}(z) w^k   
=\frac{1}{1-wx}\left(\frac{1+zx}{1-zx}\right)^{w/2}\left(\frac{1-x}{1+x}\right)^{w/2},
\end{equation*}
and
\begin{equation*}
\tag{b}
\frac{A_{n}^{\fix}(t,z)}{(1-t)^{n+1}} = \sum_{k=1}^n b_{n,k}(z) \frac{A_k(t)}{(1-t)^{k+1}}
\end{equation*}
where
\begin{equation*}
1+\sum_{n=1}^\infty x^n \sum_{k=1}^n b_{n,k}(z) w^k= \frac{1}{1-wx}\left(\frac{1-x}{1-zx}\right)^w. 
\end{equation*}
\end{thm}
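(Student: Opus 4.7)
The plan is to mimic the proof of Theorem \ref{t-eul-inv}: compute a power sum expression for $Q = \sum_n Q^{\fix}(\mathfrak{S}_n)x^n$, and then apply a refinement of Theorem \ref{t-pkdes} (b) and (c) with the extra statistic $\fix$ (proved by the same scalar product argument that produces Theorem \ref{t-pkdesst} from Theorem \ref{t-pkdes} (a)). Such a refinement says that, given the power sum expansion of $Q^{\fix}(\mathfrak{S}_n)$, the polynomial $\sum_{k=1}^n a_{n,k}(z)w^k$ appearing in part (a) is obtained by substituting $p_i = w$ for $i$ odd and $p_i = 0$ for $i$ even in $Q^{\fix}(\mathfrak{S}_n)$, while $\sum_{k=1}^n b_{n,k}(z)w^k$ in part (b) is obtained by substituting $p_i = w$ for every $i$.

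Starting from $Q = H(zx)/(H(x)(1-p_{1}x))$ and using $\log H(u) = \sum_{k\ge 1}p_k u^k/k$, I would write
\[
\log Q = \sum_{k\ge 1} \frac{p_k(z^k-1)}{k}x^k + \sum_{k\ge 1}\frac{p_1^{k}}{k}x^k.
\]
The second sum is not of the form required by Lemma \ref{l-expsum}, but this causes no difficulty because the substitutions of interest send $p_1$ to $w$, so $p_1^k$ becomes $w^k$ and the second sum becomes $-\log(1-wx)$ in either case.

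For part (a), applying the substitution $p_i = w$ ($i$ odd), $p_i = 0$ ($i$ even) leaves only the odd-$k$ contributions to the first sum, giving
\[
w\sum_{k\text{ odd}}\frac{(zx)^k - x^k}{k} = \frac{w}{2}\log\frac{1+zx}{1-zx} - \frac{w}{2}\log\frac{1+x}{1-x}.
\]
Exponentiating and combining with $-\log(1-wx)$ yields exactly the generating function in Theorem \ref{t-pkdesfixA} (a). For part (b), substituting $p_i = w$ for all $i$ turns the first sum into $w\sum_{k\ge 1}\left((zx)^k - x^k\right)/k = w\log\bigl((1-x)/(1-zx)\bigr)$, and exponentiating together with the $-\log(1-wx)$ contribution gives the formula in part (b).

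The only step that requires care is the initial observation that the refinements of Theorem \ref{t-pkdes} (b) and (c) with an extra statistic permit the substitution rules on power sums to operate on the full generating function $Q$ rather than term-by-term on each $Q^{\fix}(\mathfrak{S}_n)$, but this is immediate since summation over $n$ is compatible with the (continuous) substitutions $p_i \mapsto w$ or $p_i \mapsto 0$. With this in hand, the two calculations above are routine manipulations of logarithms and produce the stated identities.
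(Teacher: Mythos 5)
Your proposal is correct and follows exactly the route the paper intends: the paper omits the proof of Theorem \ref{t-pkdesfixA} with the remark that it is similar to that of Theorem \ref{t-eul-inv}, and your argument is precisely that adaptation, expanding $\log Q$ for $Q=H(zx)/(H(x)(1-p_1x))$ and applying the $\fix$-refined versions of Theorem \ref{t-pkdes} (b) and (c) via the substitutions $p_i\mapsto w,0$ and $p_i\mapsto w$. The computations check out, and your observation that the $p_1^k/k$ terms are handled directly by the homomorphism property of the substitution (rather than via Lemma \ref{l-expsum}) is a legitimate and correctly justified point.
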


Theorem \ref{t-pkdesfixA} can be used to obtain the next corollary; the proof is very similar to that of Corollary \ref{c-CpkdesPpkdes}.

\begin{cor}
\label{c-pkfixPpk}
Let $n\geq1$. Then
\[
P_{n}^{(\pk,\fix)}(t,z)=\sum_{k=1}^{n}a_{n,k}(z)(1-t)^{(n-k)/2}P_{k}^{\pk}(t)
\]
with $a_{n,k}(z)$ as in Theorem \ref{t-pkdesfixA}.
\end{cor}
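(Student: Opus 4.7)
The plan is to mimic precisely the argument used for Corollary \ref{c-CpkdesPpkdes}. Starting from Theorem \ref{t-pkdesfixA} (a), I will invoke the classical identity $A_k(t) = ((1+t)/2)^{k+1} P_k^{\pk}(4t/(1+t)^2)$ (equation \eqref{e-Apk}) and substitute it into the right-hand side. The factors $2^k$ and $((1+t)/2)^{k+1}$ collapse, producing
\[
\frac{1}{2}\left(\frac{1+t}{1-t}\right)^{n+1} P_n^{(\pk,\fix)}\!\left(\frac{4t}{(1+t)^2},z\right)
= \frac{1}{2}\sum_{k=1}^{n} a_{n,k}(z) \left(\frac{1+t}{1-t}\right)^{k+1} P_k^{\pk}\!\left(\frac{4t}{(1+t)^2}\right).
\]
After clearing the common prefactor $\tfrac12((1+t)/(1-t))^{n+1}$, this rearranges to
\[
P_n^{(\pk,\fix)}\!\left(\frac{4t}{(1+t)^2},z\right)
= \sum_{k=1}^{n} a_{n,k}(z) \left(\frac{1-t}{1+t}\right)^{n-k} P_k^{\pk}\!\left(\frac{4t}{(1+t)^2}\right).
\]

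The next step is to invert the substitution $s = 4t/(1+t)^2$. Using $(1+t)^2 - 4t = (1-t)^2$, one finds $1-s = ((1-t)/(1+t))^2$, so $(1-t)/(1+t) = \sqrt{1-s}$ and hence $((1-t)/(1+t))^{n-k} = (1-s)^{(n-k)/2}$. Substituting and renaming $s \mapsto t$ yields exactly the identity in the corollary.

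The one detail that warrants attention --- and the closest thing to an obstacle --- is ensuring that no square roots actually appear in the final formula. I would verify this from the generating function for $a_{n,k}(z)$ given in Theorem \ref{t-pkdesfixA} (a): since
\[
\left(\frac{1+zx}{1-zx}\right)^{w/2}\!\left(\frac{1-x}{1+x}\right)^{w/2}
= \exp\!\left(w\sum_{n\text{ odd}} \frac{x^n(z^n-1)}{n}\right),
\]
every monomial $x^N w^j$ appearing in this expansion satisfies $N \equiv j \pmod{2}$, and the prefactor $1/(1-wx) = \sum_m w^m x^m$ preserves this parity. Hence $a_{n,k}(z) = 0$ whenever $n \not\equiv k \pmod 2$, so that $(n-k)/2$ is a nonnegative integer in every term that contributes. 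Beyond this parity check the argument is purely formal algebra, parallel to the cyclic case, so I do not anticipate any serious difficulty.
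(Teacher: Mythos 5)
Your proposal is correct and follows exactly the route the paper intends (the paper omits the proof, noting only that it is ``very similar to that of Corollary \ref{c-CpkdesPpkdes}'', which is precisely the substitution of Stembridge's identity \eqref{e-Apk} into Theorem \ref{t-pkdesfixA} (a) followed by inverting $t\mapsto 4t/(1+t)^2$ that you carry out). Your explicit parity verification that $a_{n,k}(z)=0$ unless $n\equiv k\pmod 2$ is a welcome addition, since the paper asserts the absence of square roots without spelling out the argument.
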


Like the formulas in Corollaries \ref{c-IpkPpk} and \ref{c-IlpkPlpk}, the above formula does not contain any square roots.

Specializing Theorem \ref{t-pkdesfixA} and Corollary \ref{c-pkfixPpk} at $z=0$ yields analogous results for derangements; we omit these formulas here. In addition, by specializing Theorem \ref{t-pkdesfixA} (a) at $z=1$ and simplifying, we recover Stembridge's \cite{Stembridge1997} formula 
\begin{equation*}
A_{n}(t)=\left(\frac{1+t}{2}\right)^{n+1}P_{n}^{\pk}\left(\frac{4t}{(1+t)^{2}}\right)
\end{equation*}
relating the Eulerian and peak polynomials.

\subsection{Counting permutations by left peaks, descents, and fixed points}

Next, we prove analogous formulas for the polynomials
\[
P_{n}^{(\lpk,\des,\fix)}(y,t,z)\coloneqq P^{(\lpk,\des,\fix)}(\mathfrak{S}_{n};y,t,z)=\sum_{\pi\in\mathfrak{S}_{n}}y^{\lpk(\pi)}t^{\des(\pi)}z^{\fix(\pi)}
\]
and
\[
D_{n}^{(\lpk,\des)}(y,t)\coloneqq P^{(\lpk,\des)}(\mathfrak{D}_{n};y,t)=\sum_{\pi\in\mathfrak{D}_{n}}y^{\lpk(\pi)}t^{\des(\pi)}.
\]
\begin{thm}
\label{t-lpkdesfix} We have
\leqnomode
\begin{multline*}
\tag{a}
\quad
\frac{1}{1-t}+\sum_{n=1}^{\infty}\frac{(1+yt)^{n}}{(1-t)^{n+1}}P_{n}^{(\lpk,\des,\fix)}\left(\frac{(1+y)^{2}t}{(y+t)(1+yt)},\frac{y+t}{1+yt},z\right)x^{n}\\
=\sum_{k=0}^{\infty}\frac{(1+zxy)^{k}}{(1-zx)^{k+1}}\frac{(1-x)^{k+1}}{(1+xy)^{k}}\frac{t^{k}}{1-(k(1+y)+1)x}
\quad
\end{multline*}\\[-15pt]
and
\begin{multline*}
\tag{b}
\qquad
\frac{1}{1-t}+\sum_{n=1}^{\infty}\frac{(1+yt)^{n}}{(1-t)^{n+1}}D_{n}^{(\lpk,\des)}\left(\frac{(1+y)^{2}t}{(y+t)(1+yt)},\frac{y+t}{1+yt}\right)x^{n}\\
=\sum_{k=0}^{\infty}\frac{(1-x)^{k+1}}{(1+xy)^{k}}\frac{t^{k}}{1-(k(1+y)+1)x}.
\qquad
\end{multline*}
\end{thm}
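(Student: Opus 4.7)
The plan is to mirror the proof of Theorem \ref{t-pkdesfix}, but with Theorem \ref{t-lpkdesst} replacing Theorem \ref{t-pkdesst} throughout, so that $Q[X+1]$ plays the role previously played by $Q$. Specifically, applying Theorem \ref{t-lpkdesst} with $\st = \fix$ to each $\mathfrak{S}_n$, multiplying by $x^n$, summing over $n \geq 1$, and adding the $n=0$ contribution $1/(1-t)$, the left-hand side of part (a) becomes $\sum_{k=0}^\infty \Theta_{y,k}(Q[X+1])\, t^k$ where $Q = H(zx)/[H(x)(1-p_1 x)]$. Hence the proof reduces to establishing the closed form
$$\Theta_{y,k}(Q[X+1]) = \frac{(1+zxy)^k (1-x)^{k+1}}{(1-zx)^{k+1}(1+xy)^k (1 - (k(1+y)+1)x)}.$$

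The key computation uses composition of plethysms together with the identity $(X+1)[k(1-\alpha)] = k(1-\alpha)+1$ to rewrite
$$\Theta_{y,k}(Q[X+1]) = Q[X+1][k(1-\alpha)]\big|_{\alpha=-y} = Q[k(1-\alpha)+1]\big|_{\alpha=-y},$$
and then evaluates $Q[k(1-\alpha)+1]$ factor-by-factor. For a monic term $\mathsf{m}$, the identity $H(\mathsf{m})=H[\mathsf{m}X]$ together with composition gives $H(\mathsf{m})[k(1-\alpha)+1] = H[\mathsf{m}(k(1-\alpha)+1)] = H[(k+1)\mathsf{m} - k\mathsf{m}\alpha]$, which by Lemma \ref{l-Hps} (a), (c) and Lemma \ref{l-HE} (c) collapses to $(1-\mathsf{m}\alpha)^k/(1-\mathsf{m})^{k+1}$. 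Specializing $\mathsf{m} = zx$ and $\mathsf{m} = x$ handles the two $H$-factors of $Q$, and the remaining factor is computed directly from $p_1[k(1-\alpha)+1]=k(1-\alpha)+1$, giving $(1-p_1 x)[k(1-\alpha)+1] = 1-(k(1-\alpha)+1)x$. Assembling the three pieces and setting $\alpha=-y$ produces the displayed closed form and completes part (a).

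Part (b) then follows by specializing part (a) at $z=0$: the factor $z^{\fix(\pi)}|_{z=0}$ vanishes unless $\pi$ is a derangement, so $P_n^{(\lpk,\des,\fix)}(y,t,0) = D_n^{(\lpk,\des)}(y,t)$; meanwhile, the summand factor $(1+zxy)^k/(1-zx)^{k+1}$ collapses to $1$ at $z=0$, reducing the right-hand side of (a) to that of (b). The main obstacle is purely bookkeeping: one must carefully track the composition of plethystic substitutions when $Q$ mixes the symmetric-function variables $p_i$ with the external variables $x$, $z$, and $\alpha$, so that each application of Lemmas \ref{l-Hps} and \ref{l-HE} produces the correct exponents. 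Once that is in order, the calculation is essentially mechanical.
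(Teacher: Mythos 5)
Your proposal is correct and follows essentially the same route as the paper: both reduce part (a) to evaluating $\Theta_{y,k}(Q[X+1])$ via Theorem \ref{t-lpkdesst} and obtain the same closed form factor by factor, with part (b) as the $z=0$ specialization. The only cosmetic difference is that you merge the two substitutions into a single plethysm $Q[k(1-\alpha)+1]$, whereas the paper first computes $Q[X+1]$ using its Lemma \ref{l-HEXplus1} and then applies Lemma \ref{l-HE} (d); the resulting computations are identical.
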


The following lemma is needed for the proof of Theorem \ref{t-lpkdesfix}.

\begin{lem}
\label{l-HEXplus1} Let $\mathsf{m}$ be a monic term. Then
\begin{enumerate}
\item [\normalfont{(a)}] $H(\mathsf{m})[X+1]=H(\mathsf{m})/(1-\mathsf{m})$,
\item [\normalfont{(b)}] $E(\mathsf{m})[X+1]=(1+\mathsf{m})E(\mathsf{m})$.
\end{enumerate}
\end{lem}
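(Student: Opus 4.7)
The plan for part (a) is to use the plethystic identities for $H$ already established in Lemmas \ref{l-Hps} and \ref{l-HE}. Applying Lemma \ref{l-Hps}(b) in reverse gives $H(\mathsf{m})[X+1] = H[\mathsf{m}(X+1)] = H[\mathsf{m}X + \mathsf{m}]$; Lemma \ref{l-Hps}(a) splits this as $H[\mathsf{m}X]\cdot H[\mathsf{m}]$; and Lemma \ref{l-HE}(a) and (c) simplify these two factors to $H(\mathsf{m})$ and $1/(1-\mathsf{m})$, respectively, yielding the claim.

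For part (b), the cleanest route is through the power sum expansion (\ref{e-ep}), which says that $E(\mathsf{m}) = \exp\bigl(\sum_{k\ge 1}(-1)^{k-1}(p_k/k)\mathsf{m}^k\bigr)$. Because plethysm in the first variable is a ring homomorphism that acts term-by-term on the $p_k$, and because $p_k[X+1]=p_k+1$, I would compute
\begin{equation*}
E(\mathsf{m})[X+1]=E(\mathsf{m})\cdot\exp\Bigl(\sum_{k\ge 1}(-1)^{k-1}\frac{\mathsf{m}^k}{k}\Bigr)=(1+\mathsf{m})\,E(\mathsf{m}),
\end{equation*}
the final step being $\exp(\log(1+\mathsf{m}))=1+\mathsf{m}$. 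Incidentally, the same trick applied to (\ref{e-hp}) gives an alternate derivation of part (a), via $\exp\bigl(\sum_{k\ge 1}\mathsf{m}^k/k\bigr)=1/(1-\mathsf{m})$. One could also derive (b) from (a) by applying the homomorphism $f\mapsto f[X+1]$ to the relation $E(\mathsf{m})H(-\mathsf{m})=1$ from (\ref{e-he}) and computing $H(-\mathsf{m})[X+1]$ directly.

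These computations are routine and no step poses a real obstacle. The only point requiring care is the observation that $\mathsf{m}$, being a monic term in variables disjoint from the $x_i$, is untouched by the plethystic substitution $X\mapsto X+1$, so the identities defining $H(\mathsf{m})$ and $E(\mathsf{m})$ pass through the substitution in the expected way.
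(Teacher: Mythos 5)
Your part (a) is exactly the paper's proof: rewrite $H(\mathsf{m})[X+1]$ as $H[\mathsf{m}X+\mathsf{m}]$ via Lemma \ref{l-Hps} (b), split it with Lemma \ref{l-Hps} (a), and evaluate the two factors with Lemma \ref{l-HE} (a) and (c). For part (b) your primary route differs slightly from the paper's: you substitute $X+1$ directly into the exponential power-sum form \eqref{e-ep}, using $p_k[X+1]=p_k+1$ and $\exp\bigl(\sum_{k\ge1}(-1)^{k-1}\mathsf{m}^k/k\bigr)=1+\mathsf{m}$, whereas the paper deduces (b) from (a) by applying the homomorphism $f\mapsto f[X+1]$ to $E(\mathsf{m})=H(-\mathsf{m})^{-1}$ (the relation \eqref{e-he}), which is precisely the alternative you sketch in your last sentence. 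Both computations are correct; the power-sum route is self-contained and makes the factor $1+\mathsf{m}$ appear as $\exp(\log(1+\mathsf{m}))$, while the paper's route reuses part (a) and avoids any logarithms. Your closing remark---that $\mathsf{m}$ is unaffected by the substitution into the $x_i$, so the coefficients $\mathsf{m}^k$ pass through $f\mapsto f[X+1]$ as scalars---is indeed the one point that needs to be said, and it is consistent with how the paper extends plethysm to series with coefficients in extra variables. No gaps.
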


\begin{proof}
We have 
\begin{align*}
H(\mathsf{m})[X+1] & =H[\mathsf{m}X+\mathsf{m}] && \text{(by Lemma }\ref{l-Hps}\text{ (b))}\\
 & =H[\mathsf{m}X]H[\mathsf{m}] && \text{(by Lemma }\ref{l-Hps}\text{ (a))}\\
 & =\frac{H(\mathsf{m})}{1-\mathsf{m}}, && \text{(by Lemma }\ref{l-HE}\text{ (a) and (c))}
\end{align*}
which proves (a). To prove (b), first observe that (a) implies 
\[
(H(-\mathsf{m})[X+1])^{-1}=(1+\mathsf{m})H(-\mathsf{m})^{-1}.
\]
Therefore
\begin{align*}
E(\mathsf{m})[X+1] & =H(-\mathsf{m})^{-1}[X+1]\\
 & =(H(-\mathsf{m})[X+1])^{-1}\\
 & =(1+\mathsf{m})H(-\mathsf{m})^{-1}\\
 & =(1+\mathsf{m})E(\mathsf{m})
\end{align*}
and we are done.
\end{proof}

\begin{proof}[Proof of Theorem \ref{t-lpkdesfix}]
By Lemmas \ref{l-HEXplus1} (a) and \ref{l-HE} (d), we have
\begin{align}
\sum_{k=0}^{\infty}\Theta_{y,k}(Q[X+1])t^{k} & =\sum_{k=0}^{\infty}\frac{\Theta_{y,k}(H(zx)[X+1])}{\Theta_{y,k}(H(x)[X+1])\Theta_{y,k}((1-p_{1}x)[X+1])}t^{k}\nonumber \\
 & =\sum_{k=0}^{\infty}\frac{1-x}{1-zx}\frac{\Theta_{y,k}(H(zx))}{\Theta_{y,k}(H(x))\Theta_{y,k}(1-(p_{1}+1)x)}t^{k}\nonumber \\
 & =\sum_{k=0}^{\infty}\frac{(1+zxy)^{k}}{(1-zx)^{k+1}}\frac{(1-x)^{k+1}}{(1+xy)^{k}}\frac{t^{k}}{1-(k(1+y)+1)x}.\label{e-ThetaQp1}
\end{align}
Then
\begin{align*}
 & \sum_{k=0}^{\infty}\frac{(1+zxy)^{k}}{(1-zx)^{k+1}}\frac{(1-x)^{k+1}}{(1+xy)^{k}}\frac{t^{k}}{1-(k(1+y)+1)x}=\sum_{k=0}^{\infty}\Theta_{y,k}(Q[X+1])t^{k}\\
 & \qquad\qquad\qquad\qquad\qquad\qquad\qquad\qquad\qquad=\sum_{n=0}^{\infty}\sum_{k=0}^{\infty}\Theta_{y,k}(Q^{\fix}(\mathfrak{S}_{n})[X+1])t^{k}x^{n}\\
 & \qquad\qquad\qquad\qquad\qquad\qquad\qquad\qquad\qquad=\frac{1}{1-t}+\sum_{n=1}^{\infty}\sum_{k=0}^{\infty}\Theta_{y,k}(Q^{\fix}(\mathfrak{S}_{n})[X+1])t^{k}x^{n},
\end{align*}
which by Theorem \ref{t-lpkdesst} is equal to 
\[
\frac{1}{1-t}+\sum_{n=1}^{\infty}\frac{(1+yt)^{n}}{(1-t)^{n+1}}P_{n}^{(\lpk,\des,\fix)}\left(\frac{(1+y)^{2}t}{(y+t)(1+yt)},\frac{y+t}{1+yt},z\right)x^{n},
\]
thus establishing part (a). Part (b) follows from setting $z=0$ in
part (a).
\end{proof}
Specializing Theorem \ref{t-lpkdesfix} appropriately, we obtain generating
function formulas for the polynomials
\[
P_{n}^{(\lpk,\fix)}(t,z)\coloneqq\sum_{\pi\in\mathfrak{S}_{n}}t^{\lpk(\pi)}z^{\fix(\pi)}\quad\text{and}\quad D_{n}^{\lpk}(t)\coloneqq P^{\lpk}(\mathfrak{D}_{n};t)=\sum_{\pi\in\mathfrak{D}_{n}}t^{\lpk(\pi)}.
\]

\begin{cor}
We have
\leqnomode
\begin{multline*}
\tag{a}
\qquad\quad
\frac{1}{1-t}+\sum_{n=1}^{\infty}\frac{(1+t)^{n}}{(1-t)^{n+1}}P_{n}^{(\lpk,\fix)}\left(\frac{4t}{(1+t)^{2}},z\right)x^{n}\\
=\sum_{k=0}^{\infty}\frac{(1+zx)^{k}}{(1-zx)^{k+1}}\frac{(1-x)^{k+1}}{(1+x)^{k}}\frac{t^{k}}{1-(2k+1)x}
\qquad\quad
\end{multline*}\\[-15pt]
and
\begin{equation*}
\tag{b}
\frac{1}{1-t}+\sum_{n=1}^{\infty}\frac{(1+t)^{n}}{(1-t)^{n+1}}D_{n}^{\lpk}\left(\frac{4t}{(1+t)^{2}}\right)x^{n}  =\sum_{k=0}^{\infty}\frac{(1-x)^{k+1}}{(1+x)^{k}}\frac{t^{k}}{1-(2k+1)x}.
\end{equation*}
\end{cor}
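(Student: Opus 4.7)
The plan is to obtain both parts of this corollary as direct specializations of Theorem \ref{t-lpkdesfix}. For part (a), I would set $y=1$ in Theorem \ref{t-lpkdesfix} (a). Under this substitution one checks that $(1+yt)^n \mapsto (1+t)^n$, that $\frac{(1+y)^2 t}{(y+t)(1+yt)} \mapsto \frac{4t}{(1+t)^2}$, and crucially that $\frac{y+t}{1+yt}\mapsto 1$. Since the $\des$-variable is specialized to $1$, the polynomial
\[
P_{n}^{(\lpk,\des,\fix)}\left(\frac{(1+y)^{2}t}{(y+t)(1+yt)},\frac{y+t}{1+yt},z\right)
\]
collapses by definition to $P_{n}^{(\lpk,\fix)}\bigl(\frac{4t}{(1+t)^2},z\bigr)$, matching the left-hand side of (a). On the right-hand side, the substitution $y=1$ sends $(1+xy)^k$ to $(1+x)^k$, $(1+zxy)^k$ to $(1+zx)^k$, and $k(1+y)+1$ to $2k+1$, which is exactly the claimed expression.

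For part (b), the same specialization $y=1$ applied to Theorem \ref{t-lpkdesfix} (b) immediately gives the result, since $D_n^{(\lpk,\des)}$ collapses to $D_n^{\lpk}$ when the $\des$-variable is set to $1$. Alternatively, part (b) can be recovered from part (a) by setting $z=0$, since a derangement is precisely a permutation with $\fix=0$, so $z^{\fix(\pi)}\rvert_{z=0}$ acts as the indicator of $\mathfrak{D}_n$, and accordingly $\sum_{\pi\in\mathfrak{S}_n}t^{\lpk(\pi)}z^{\fix(\pi)}\rvert_{z=0}=D_n^{\lpk}(t)$, while on the right-hand side the factor $\frac{(1+zx)^k}{(1-zx)^{k+1}}$ becomes $1$.

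Since the derivation is purely a substitution, there is no genuine obstacle to overcome; the only routine verifications are (i) the simplifications of the various rational expressions at $y=1$ and $z=0$, and (ii) confirming that the definitional collapse of the multivariate polynomials $P^{(\lpk,\des,\fix)}$ and $D^{(\lpk,\des)}$ under these specializations produces precisely $P^{(\lpk,\fix)}$ and $D^{\lpk}$. Both are straightforward from the definitions in the text, so the corollary follows in a few lines.
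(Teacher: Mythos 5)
Your proposal is correct and is exactly the paper's (implicit) argument: the corollary is obtained by setting $y=1$ in Theorem \ref{t-lpkdesfix}, under which $\frac{y+t}{1+yt}=1$ collapses $P_{n}^{(\lpk,\des,\fix)}$ to $P_{n}^{(\lpk,\fix)}$ and $D_{n}^{(\lpk,\des)}$ to $D_{n}^{\lpk}$, while the rational factors on the right simplify as you describe. Your observation that part (b) also follows from part (a) at $z=0$ is a valid (and equally routine) alternative, mirroring how the paper itself derives Theorem \ref{t-lpkdesfix} (b) from (a).
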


We omit the proofs of the next theorem and its corollary, as they are very similar to the proofs of analogous results from previous sections.

\begin{thm}
\label{t-lpkfixB}
Let $n\ge1$. Then
\begin{equation*}
\frac{(1+t)^{n}}{(1-t)^{n+1}}P_{n}^{(\lpk,\fix)}\left(\frac{4t}{(1+t)^{2}},z\right)=
\sum_{k=0}^n d_{n,k}(z) \frac{B_k(t)}{(1-t)^{k+1}}
\end{equation*}
where
\begin{equation*}
1+\sum_{n=1}^\infty x^n \sum_{k=0}^n d_{n,k}(z) w^k= 
   \frac{1}{1-xw}
   \left(\frac{1-x^2}{1-z^2 x^2}\right)^{1/2}
  \left( \frac{1+zx}{1-zx}\right)^{w/2}
  \left(\frac{1-x}{1+x}\right)^{w/2}.
\end{equation*}
\end{thm}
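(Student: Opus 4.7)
The strategy parallels the proofs of Theorem \ref{t-eul-inv} and Theorem \ref{t-pkdesfixA}. First I would invoke a refined version of Theorem \ref{t-lpkdes}(b) that tracks an additional statistic — constructed from Theorem \ref{t-lpkdesst} the same way the last two equalities of Theorem \ref{t-lpkdes}(b) are obtained from Theorem \ref{t-lpkdes}(a). Applied to $\Pi = \mathfrak{S}_n$ with $\st = \fix$, this immediately yields
\begin{equation*}
\frac{(1+t)^{n}}{(1-t)^{n+1}}P_{n}^{(\lpk,\fix)}\left(\frac{4t}{(1+t)^{2}},z\right)
 = \sum_{k=0}^n d_{n,k}(z)\,\frac{B_k(t)}{(1-t)^{k+1}},
\end{equation*}
where $\sum_{k=0}^n d_{n,k}(z)\,w^k$ is obtained from the power sum expansion of $Q^{\fix}(\mathfrak{S}_n)$ by the substitution $p_i \mapsto w$ for $i$ odd and $p_i \mapsto 1$ for $i$ even.

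Summing over $n \geq 0$, and recalling the given closed form $Q = \sum_{n\geq 0} Q^{\fix}(\mathfrak{S}_n)\,x^n = H(zx)/[H(x)(1-p_1 x)]$, the task reduces to evaluating this expression under the same substitution. The factor $1/(1-p_1 x)$ maps directly to $1/(1-wx)$. For the two $H$-factors I would use $H(y) = \exp\!\bigl(\sum_{k\geq 1} p_k y^k/k\bigr)$ and split the inner sum by the parity of $k$. Combining the elementary identities $\sum_{k\text{ odd}} y^k/k = \tfrac{1}{2}\log\frac{1+y}{1-y}$ and $\sum_{k\text{ even}} y^k/k = -\tfrac{1}{2}\log(1-y^2)$, the substitution sends
\begin{equation*}
H(y) \;\longmapsto\; \frac{1}{\sqrt{1-y^2}}\left(\frac{1+y}{1-y}\right)^{w/2}.
\end{equation*}

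Applying this with $y = zx$ in the numerator and $y = x$ in the denominator and taking the quotient produces
\begin{equation*}
\frac{H(zx)}{H(x)} \;\longmapsto\; \left(\frac{1-x^2}{1-z^2 x^2}\right)^{1/2}\left(\frac{1+zx}{1-zx}\right)^{w/2}\left(\frac{1-x}{1+x}\right)^{w/2},
\end{equation*}
and multiplying by $1/(1-wx)$ recovers the right-hand side of the claimed generating function. The only potentially delicate point is the refined form of Theorem \ref{t-lpkdes}(b), but this is essentially a statistic-tagged re-run of that proof (identical to how Theorems \ref{t-eul-inv} and \ref{t-pkdesfixA} were obtained); beyond that, every remaining step is a routine logarithmic manipulation of formal power series.
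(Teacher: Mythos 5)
Your proposal is correct and matches the route the paper intends: the paper omits this proof precisely because it is the statistic-tagged analogue of Theorems \ref{t-eul-inv} and \ref{t-pkdesfixA}, namely a refined Theorem \ref{t-lpkdes}~(b) applied to $Q^{\fix}(\mathfrak{S}_n)$ followed by the substitution $p_i\mapsto w$ ($i$ odd), $p_i\mapsto 1$ ($i$ even) in $H(zx)/\bigl(H(x)(1-p_1x)\bigr)$. Your logarithmic evaluation of that substitution reproduces the stated generating function exactly.
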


\begin{cor}
Let $n\geq1$. Then
\[
P_{n}^{(\lpk,\fix)}(t,z)=\sum_{k=0}^{n}d_{n,k}(z)(1-t)^{(n-k)/2}P_{k}^{\lpk}(t)
\]
with $d_{n,k}(z)$ as in Theorem \ref{t-lpkfixB}.
\end{cor}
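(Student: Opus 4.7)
The plan is to mirror the derivation of Corollary \ref{c-ClpkPlpk} from Theorem \ref{t-cyclpk}. The main input is Petersen's identity $B_k(t) = (1+t)^k P_k^{\lpk}(4t/(1+t)^2)$; substituting this into the conclusion of Theorem \ref{t-lpkfixB} gives
\[
\frac{(1+t)^n}{(1-t)^{n+1}} P_n^{(\lpk,\fix)}\!\left(\frac{4t}{(1+t)^2}, z\right) = \sum_{k=0}^n d_{n,k}(z) \frac{(1+t)^k}{(1-t)^{k+1}} P_k^{\lpk}\!\left(\frac{4t}{(1+t)^2}\right),
\]
and after dividing through by $(1+t)^n/(1-t)^{n+1}$ this becomes
\[
P_n^{(\lpk,\fix)}\!\left(\frac{4t}{(1+t)^2}, z\right) = \sum_{k=0}^n d_{n,k}(z) \left(\frac{1-t}{1+t}\right)^{n-k} P_k^{\lpk}\!\left(\frac{4t}{(1+t)^2}\right).
\]

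Next I would invert the substitution. Setting $s = 4t/(1+t)^2$ and using the identity $(1+t)^2 - 4t = (1-t)^2$, so that $(1-t)/(1+t) = (1-s)^{1/2}$, the displayed equation becomes
\[
P_n^{(\lpk,\fix)}(s, z) = \sum_{k=0}^n d_{n,k}(z)\, (1-s)^{(n-k)/2}\, P_k^{\lpk}(s),
\]
which is the claimed identity after renaming $s$ back to $t$.

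The only remaining point is to justify that the exponents $(n-k)/2$ are integers, i.e., that $d_{n,k}(z) = 0$ unless $n \equiv k \pmod{2}$; this is the step I expect to require the most care, and it amounts to a parity inspection of the generating function in Theorem \ref{t-lpkfixB}. The factor $1/(1-xw)$ contributes $(xw)^a$; the factor $((1-x^2)/(1-z^2x^2))^{1/2}$ is a power series in $x^2$; and writing
\[
\Bigl(\tfrac{1+zx}{1-zx}\Bigr)^{w/2}\Bigl(\tfrac{1-x}{1+x}\Bigr)^{w/2} = \exp\!\bigl(w\, g(x,z)\bigr),
\]
where $g(x,z)=\sum_{j\text{ odd}}(z^j-1)x^j/j$ contains only odd powers of $x$, shows that in the expansion of $\exp(wg(x,z))$ the coefficient of $w^m$ has all $x$-exponents congruent to $m \pmod{2}$. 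Multiplying the three factors, the coefficient of $w^k x^n$ vanishes unless $n \equiv k \pmod 2$, as required.
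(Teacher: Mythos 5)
Your proof is correct and follows essentially the same route the paper intends: substitute Petersen's identity $B_k(t)=(1+t)^kP_k^{\lpk}\bigl(4t/(1+t)^2\bigr)$ into Theorem \ref{t-lpkfixB} and invert the substitution $t\mapsto 4t/(1+t)^2$ using $(1-t)^2/(1+t)^2=1-4t/(1+t)^2$, exactly as in the proofs of Corollaries \ref{c-CpkdesPpkdes} and \ref{c-ClpkPlpk} to which the paper defers. Your explicit parity verification that $d_{n,k}(z)=0$ unless $n\equiv k\pmod 2$ is a welcome addition, since the paper only asserts this fact without proof.
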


Like the formulas in Corollaries \ref{c-IpkPpk}, \ref{c-IlpkPlpk}, and \ref{c-pkfixPpk}, the above formula does not contain any square roots.

We note that setting $z=1$ in Theorem \ref{t-lpkfixB} recovers Petersen's formula \cite[Proposition 4.15]{Petersen2007}
\begin{equation*}
B_{n}(t)=(1+t)^{n}P_{n}^{\lpk}\left(\frac{4t}{(1+t)^{2}}\right)
\end{equation*}
relating the type B Eulerian and left peak polynomials. 

\subsection{Counting permutations by up-down runs and fixed points}

Finally, let us present analogous formulas for the polynomials
\[
P_{n}^{(\udr,\fix)}(t,z)\coloneqq\sum_{\pi\in\mathfrak{S}_{n}}t^{\udr(\pi)}z^{\fix(\pi)}\quad\text{and}\quad D_{n}^{\udr}(t)\coloneqq P^{\udr}(\mathfrak{D}_{n};t)=\sum_{\pi\in\mathfrak{D}_{n}}t^{\udr(\pi)}.
\]
The proofs are omitted as they are very similar to that of analogous results from previous sections.
\begin{thm}
\label{t-udrfix}
We have
\leqnomode
\begin{multline*}
\tag{a}
\quad
\frac{1}{1-t}+\sum_{n=1}^{\infty}\frac{(1+t^{2})^{n}}{2(1-t)^{2}(1-t^{2})^{n-1}}P_{n}^{(\udr,\fix)}\left(\frac{2t}{1+t^{2}},z\right)\\
=\sum_{k=0}^{\infty}\frac{(1+zx)^{k}}{(1-zx)^{k}}\frac{(1-x)^{k}}{(1+x)^{k}}t^{2k}\left(\frac{1}{1-2kx}+\frac{1-x}{1-zx}\frac{t}{1-(2k+1)x}\right)
\quad
\end{multline*}\\[-15pt]
and
\begin{multline*}
\tag{b}
\qquad
\frac{1}{1-t}+\sum_{n=1}^{\infty}\frac{(1+t^{2})^{n}}{2(1-t)^{2}(1-t^{2})^{n-1}}D_{n}^{\udr}\left(\frac{2t}{1+t^{2}}\right)\\
=\sum_{k=0}^{\infty}\frac{(1-x)^{k}}{(1+x)^{k}}t^{2k}\left(\frac{1}{1-2kx}+\frac{(1-x)t}{1-(2k+1)x}\right).
\qquad
\end{multline*}
\end{thm}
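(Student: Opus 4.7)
The plan is to follow the same template used in Theorems \ref{t-pkdesfix} and \ref{t-lpkdesfix}, with Theorem \ref{t-udrst} playing the role previously played by Theorems \ref{t-pkdesst} and \ref{t-lpkdesst}. Concretely, I would apply Theorem \ref{t-udrst} to $\Pi = \mathfrak{S}_n$ with $\st = \fix$, multiply by $x^n$, and sum over $n \geq 0$. Using $Q = \sum_{n\geq 0} Q^{\fix}(\mathfrak{S}_n) x^n = H(zx)/(H(x)(1-p_1 x))$ recalled at the start of Section \ref{s-fixder} together with the linearity of $\Theta_{1,k}$, the total becomes
\[
\sum_{k=0}^{\infty}\Theta_{1,k}(Q)\,t^{2k} + \sum_{k=0}^{\infty}\Theta_{1,k}(Q[X+1])\,t^{2k+1}.
\]
The $n=0$ contribution peels off as $\sum_{k\geq 0}t^{2k}+\sum_{k\geq 0}t^{2k+1}=\frac{1}{1-t}$, which I move to the left side to account for the isolated $\frac{1}{1-t}$ appearing in the theorem.

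The two remaining $k$-sums are exactly the evaluations already computed in the proofs of Theorems \ref{t-pkdesfix} and \ref{t-lpkdesfix}, specialized to $y=1$. Indeed, equation (\ref{e-ThetaQ}) at $y=1$ with $t$ replaced by $t^2$ yields
\[
\sum_{k=0}^{\infty}\Theta_{1,k}(Q)\,t^{2k} = \sum_{k=0}^{\infty}\frac{(1+zx)^{k}(1-x)^{k}}{(1-zx)^{k}(1+x)^{k}}\frac{t^{2k}}{1-2kx},
\]
and equation (\ref{e-ThetaQp1}) at $y=1$, with $t$ replaced by $t^2$ and the result multiplied by $t$, yields
\[
\sum_{k=0}^{\infty}\Theta_{1,k}(Q[X+1])\,t^{2k+1} = \sum_{k=0}^{\infty}\frac{(1+zx)^{k}(1-x)^{k+1}}{(1-zx)^{k+1}(1+x)^{k}}\frac{t^{2k+1}}{1-(2k+1)x}.
\]
Adding these expressions and factoring out the common prefactor from each summand produces the right side of (a) in its compact two-term form. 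Part (b) then follows by specializing $z=0$, since $P_n^{(\udr,\fix)}(t,z)|_{z=0}$ retains only the permutations with $\fix(\pi)=0$, so it reduces to $D_n^{\udr}(t)$.

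The main obstacle is not conceptual; the plethystic machinery of Theorem \ref{t-udrst} and Lemmas \ref{l-HE} and \ref{l-HEXplus1} is already in place, and the key intermediate identities (\ref{e-ThetaQ}) and (\ref{e-ThetaQp1}) were established in the preceding subsections. The only care required is bookkeeping: verifying that the $n=0$ case (where $(1-t^2)^{n-1}$ makes the prefactor on the Theorem \ref{t-udrst} side awkward to read off directly) matches the isolated $\frac{1}{1-t}$ in the statement, and then combining the two $k$-series cleanly into the single factored form appearing on the right.
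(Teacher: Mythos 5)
Your proposal is correct and is exactly the argument the paper intends: the authors omit the proof of Theorem \ref{t-udrfix} precisely because it combines Theorem \ref{t-udrst} with the evaluations (\ref{e-ThetaQ}) and (\ref{e-ThetaQp1}) at $y=1$ (with $t\mapsto t^2$), just as you describe, and your handling of the $n=0$ term and the $z=0$ specialization for part (b) is right.
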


\begin{thm}
\label{t-udrfixAB}
Let $n\geq1$. Then
\begin{equation*}
\frac{(1+t^{2})^{n}}{2(1-t)^{2}(1-t^{2})^{n-1}}P_{n}^{(\udr,\fix)}\left(\frac{2t}{1+t^{2}}, z \right)
  =\sum_{k=1}^n a_{n,k}(z) 2^{k}\frac{A_{k}(t^2)}{(1-t^{2})^{k+1}}
  +t\sum_{k=0}^n d_{n,k}(z)\frac{B_{k}(t^{2})}{(1-t^{2})^{k+1}}
\end{equation*}
with $a_{n,k}(z)$ and $d_{n,k}(z)$ as in Theorems \ref{t-pkdesfixA} and \ref{t-lpkfixB}.
\end{thm}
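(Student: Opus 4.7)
The plan is to apply Theorem \ref{t-udrst} with $\Pi=\mathfrak{S}_n$ and $\st=\fix$. Each $Q^{\fix}(\mathfrak{S}_n)$ is symmetric because the generating function $\sum_{n\ge 0}Q^{\fix}(\mathfrak{S}_n)x^n = H(zx)/(H(x)(1-p_1 x))$ recorded at the start of Section \ref{s-fixder} is symmetric in $x_1,x_2,\ldots$. Theorem \ref{t-udrst} then yields
\begin{equation*}
\frac{(1+t^{2})^{n}}{2(1-t)^{2}(1-t^{2})^{n-1}}P_{n}^{(\udr,\fix)}\left(\frac{2t}{1+t^{2}},z\right)
= \sum_{k=0}^{\infty}\Theta_{1,k}(Q^{\fix}(\mathfrak{S}_n))t^{2k} + \sum_{k=0}^{\infty}\Theta_{1,k}(Q^{\fix}(\mathfrak{S}_n)[X+1])t^{2k+1}.
\end{equation*}

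Next I would recognize each of these two sums from earlier work. The proof of Theorem \ref{t-pkdesfixA}(a)---which runs the argument of Theorem \ref{t-pkdes}(b) in its $\st$-refined form (the generalization alluded to after Theorem \ref{t-pkdesst})---establishes the intermediate identity
\begin{equation*}
\sum_{k=0}^{\infty}\Theta_{1,k}(Q^{\fix}(\mathfrak{S}_n))t^{k} = \sum_{k=1}^n a_{n,k}(z)\,2^k\frac{A_k(t)}{(1-t)^{k+1}},
\end{equation*}
so the substitution $t\mapsto t^2$ produces the first summand of the stated formula. Analogously, the proof of Theorem \ref{t-lpkfixB} establishes
\begin{equation*}
\sum_{k=0}^{\infty}\Theta_{1,k}(Q^{\fix}(\mathfrak{S}_n)[X+1])t^{k} = \sum_{k=0}^n d_{n,k}(z)\frac{B_k(t)}{(1-t)^{k+1}},
\end{equation*}
and substituting $t\mapsto t^2$ and then multiplying by $t$ produces the second summand. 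Adding the two contributions completes the proof.

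The main obstacle is pure bookkeeping: one must verify that the coefficients $a_{n,k}(z)$ and $d_{n,k}(z)$ appearing in these intermediate identities actually coincide with those defined in Theorems \ref{t-pkdesfixA} and \ref{t-lpkfixB}. This reduces to observing that both families are obtained from the same power-sum specializations of $Q^{\fix}(\mathfrak{S}_n)$. Setting $p_i\mapsto w$ for odd $i$ and $p_i\mapsto 0$ for even $i$ in $H(zx)/(H(x)(1-p_1x))$ yields exactly the generating function $\frac{1}{1-wx}\bigl((1+zx)/(1-zx)\bigr)^{w/2}\bigl((1-x)/(1+x)\bigr)^{w/2}$ appearing in Theorem \ref{t-pkdesfixA}, while setting $p_i\mapsto w$ for odd $i$ and $p_i\mapsto 1$ for even $i$ introduces the extra factor $\bigl((1-x^2)/(1-z^2x^2)\bigr)^{1/2}$ and yields the generating function of Theorem \ref{t-lpkfixB}. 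Hence no new computation is required beyond assembling the preceding results.
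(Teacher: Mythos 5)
Your proposal is correct and matches the approach the paper intends: the paper omits the proof of Theorem \ref{t-udrfixAB} as "very similar to analogous results from previous sections," and that intended argument is precisely your combination of Theorem \ref{t-udrst} (applied to $Q^{\fix}(\mathfrak{S}_n)$) with the two intermediate identities underlying Theorems \ref{t-pkdesfixA}(a) and \ref{t-lpkfixB}, followed by $t\mapsto t^2$. Your check that the power-sum specializations $p_i\mapsto w$ (odd $i$) with $p_i\mapsto 0$ resp.\ $p_i\mapsto 1$ (even $i$) of $H(zx)/(H(x)(1-p_1x))$ reproduce the stated generating functions for $a_{n,k}(z)$ and $d_{n,k}(z)$ is also accurate.
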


Let 
\[
P_{n}^{\udr}(t)\coloneqq P^{\udr}(\mathfrak{S}_{n};t)=\sum_{\pi\in\mathfrak{S}_{n}}t^{\udr(\pi)}.
\]
We note that setting $z=1$ in Theorem \ref{t-udrfixAB} yields
\begin{align*}
2^{n}A_{n}(t^{2})+tB_{n}(t^{2}) & =\frac{(1+t)^{2}(1+t^{2})^{n}}{2}P_{n}^{\udr}\left(\frac{2t}{1+t^{2}}\right),
\end{align*}
which is equivalent to the formula 
\[
\hat{B}_{n}(t)=\frac{(1+t)(1+t^{2})^{n}}{2t}P_{n}^{\udr}\left(\frac{2t}{1+t^{2}}\right)
\]
previously obtained by the second author \cite[Corollary 4.18]{Zhuang2017}. Here, $\hat{B}_{n}(t)$ is the $n$th flag descent polynomial, which encodes the distribution of the flag descent number over the $n$th hyperoctahedral group; see \cite[Section 2.3]{Zhuang2017} for details.

\section{Cycle type}

In Section \ref{s-fixder}, we derived formulas for counting permutations by the number of fixed points jointly with the peak number and descent number, with the left peak number and descent number, and with the number of up-down runs. We will now extend these results by deriving formulas for counting permutations by these statistics along with cycle type. 

Given a permutation $\pi$, let $N_{i}(\pi)$ denote the number of $i$-cycles in $\pi$. Similarly, given a partition $\lambda$, let $N_{i}(\lambda)$ denote the number of parts of size $i$ in $\pi$. Recall that the Lyndon symmetric function 
\[
L_{\lambda}=h_{m_{1}}[L_{1}]h_{m_{2}}[L_{2}]\cdots
\]
is the quasisymmetric generating function for the set of permutations with cycle type $\lambda=(1^{m_{1}}2^{m_{2}}\cdots)$. Then 
\begin{align*}
P & \coloneqq1+\sum_{n=1}^{\infty}\sum_{\lambda\vdash n}L_{\lambda}x^{n}\prod_{i=1}^{\infty}z_{i}^{N_{i}(\lambda)}\\
 & =\sum_{m_{1},m_{2},\dots}\prod_{i=1}^{\infty}h_{m_{i}}[L_{i}](z_{i}x^{i})^{m_{i}}\\
 & =\prod_{i=1}^{\infty}\sum_{m_{i}=0}^{\infty}h_{m_{i}}[L_{i}](z_{i}x^{i})^{m_{i}}
\end{align*}
is the quasisymmetric generating function for all permutations refined by cycle type and length.

\subsection{Counting permutations by peaks, descents, and cycle type}

Define
\[
F_{n}^{(\pk,\des)}(y,t,z_{1},z_{2},\dots)\coloneqq\sum_{\pi\in\mathfrak{S}_{n}}y^{\pk(\pi)+1}t^{\des(\pi)+1}\prod_{i=1}^{\infty}z_{i}^{N_{i}(\pi)},
\]
\[
F_{n}^{\pk}(t,z_{1},z_{2},\dots)\coloneqq\sum_{\pi\in\mathfrak{S}_{n}}t^{\pk(\pi)+1}\prod_{i=1}^{\infty}z_{i}^{N_{i}(\pi)},
\]
and 
\[
F_{n}^{\des}(t,z_{1},z_{2},\dots)\coloneqq\sum_{\pi\in\mathfrak{S}_{n}}t^{\des(\pi)+1}\prod_{i=1}^{\infty}z_{i}^{N_{i}(\pi)}.
\]
Let us also define the numbers
\[
f_{i,k}\coloneqq\frac{1}{i}\sum_{d\mid i}\mu(d)k^{i/d}\quad\text{and}\quad g_{i,k}\coloneqq\frac{1}{2i}\sum_{\substack{d\mid i\\
d\:\text{odd}
}
}\mu(d)(2k)^{i/d},
\]
which will appear in our formulas for these cycle type polynomials. It is known that $f_{i,k}$ is the number of primitive necklaces of length $i$ from the alphabet $[k]$ (see \cite{Reutenauer1993}) and that $g_{i,k}$ is the number of nowhere-zero primitive twisted necklaces of length $i$ from the alphabet $\pm[k]\coloneqq\{\pm1,\pm2,\dots,\pm k\}$
(see \cite[Remark 3.2]{Fulman2019}).
\begin{thm}
\label{t-pkdesct} We have
\leqnomode
\begin{multline*}
\tag{a}
\quad\frac{1}{1-t}+\frac{1}{1+y}\sum_{n=1}^{\infty}\left(\frac{1+yt}{1-t}\right)^{n+1}\!F_{n}^{(\pk,\des)}\left(\frac{(1+y)^{2}t}{(y+t)(1+yt)},\frac{y+t}{1+yt},z_{1},z_{2},\dots\right)x^{n}\\
=\sum_{k=0}^{\infty}t^{k}\prod_{i=1}^{\infty}\exp\Big(\sum_{m_{i}=1}^{\infty}\frac{(z_{i}x^{i})^{m_{i}}}{im_{i}}\sum_{d\mid i}\mu(d)(k(1-(-y)^{dm_{i}}))^{i/d}\Big),\quad
\end{multline*}
\[\tag{b}
\frac{1}{1-t}+\frac{1}{2}\sum_{n=1}^{\infty}\left(\frac{1+t}{1-t}\right)^{n+1}\!F_{n}^{\pk}\left(\frac{4t}{(1+t)^{2}},z_{1},z_{2},\dots\right)x^{n}=\sum_{k=0}^{\infty}t^{k}\prod_{i=1}^{\infty}\left(\frac{1+z_{i}x^{i}}{1-z_{i}x^{i}}\right)^{g_{i,k}},
\]
and
\[\tag{c}
\frac{1}{1-t}+\sum_{n=1}^{\infty}\frac{F_{n}^{\des}(t,z_{1},z_{2},\dots)}{(1-t)^{n+1}}x^{n}=\sum_{k=0}^{\infty}t^{k}\prod_{i=1}^{\infty}\left(\frac{1}{1-z_{i}x^{i}}\right)^{f_{i,k}}.
\]
\end{thm}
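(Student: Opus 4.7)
The plan is to apply (a multivariate refinement of) Theorem \ref{t-pkdesst} with the family of cycle-type statistics $(N_{1},N_{2},\dots)$ playing the role of $\st$. Since each $Q^{(N_{1},N_{2},\dots)}(\mathfrak{S}_{n})=\sum_{\lambda\vdash n}L_{\lambda}\prod_{i}z_{i}^{N_{i}(\lambda)}$ is symmetric, summing over $n\geq 1$ with weight $x^{n}$ gives $P-1$, and the usual manipulation (compare the proof of Theorem \ref{t-Ipkdesfix}) reduces part (a) to the identity
\begin{equation*}
\sum_{k=0}^{\infty}\Theta_{y,k}(P)\,t^{k}=\sum_{k=0}^{\infty}t^{k}\prod_{i=1}^{\infty}\exp\biggl(\sum_{m=1}^{\infty}\frac{(z_{i}x^{i})^{m}}{im}\sum_{d\mid i}\mu(d)(k(1-(-y)^{dm}))^{i/d}\biggr),
\end{equation*}
where the constant-in-$x$ contribution $1/(1-t)$ on each side comes from the $n=0$ term $P|_{x=0}=1$. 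The whole theorem therefore rests on computing $\Theta_{y,k}(P)$.

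First I would rewrite $P$ plethystically. The identity $h_{m}[\mathsf{m}f]=\mathsf{m}^{m}h_{m}[f]$ for a monic term $\mathsf{m}$ (immediate from $p_{n}[\mathsf{m}f]=\mathsf{m}^{n}p_{n}[f]$ and the power-sum expansion of $h_{m}$) gives $\sum_{m\geq 0}h_{m}[L_{i}](z_{i}x^{i})^{m}=H[z_{i}x^{i}L_{i}]$, so from the product-over-$i$ description of $P$,
\begin{equation*}
P=\prod_{i=1}^{\infty}H[z_{i}x^{i}L_{i}].
\end{equation*}
Since $\Theta_{y,k}$ is the composition of the ring homomorphism $f\mapsto f[k(1-\alpha)]$ with evaluation at $\alpha=-y$, it is itself a ring homomorphism and distributes over this product. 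Expanding $H=\exp(\sum_{n}p_{n}/n)$, and using $p_{n}[z_{i}x^{i}L_{i}]=(z_{i}x^{i})^{n}p_{n}[L_{i}]$ together with the power-sum expansion $p_{n}[L_{i}]=\frac{1}{i}\sum_{d\mid i}\mu(d)p_{nd}^{i/d}$ (which follows from $p_{n}[p_{d}]=p_{nd}$ applied to the formula for $L_{i}$), I get
\begin{equation*}
H[z_{i}x^{i}L_{i}]=\exp\biggl(\sum_{n=1}^{\infty}\frac{(z_{i}x^{i})^{n}}{in}\sum_{d\mid i}\mu(d)p_{nd}^{i/d}\biggr).
\end{equation*}
Finally, $\Theta_{y,k}$ sends $p_{N}$ to $k(1-(-y)^{N})$, so applying it factor by factor produces exactly the right side of part (a).

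For parts (b) and (c), I specialize $y$ in part (a) and simplify the inner M\"obius sum. At $y=1$, $1-(-1)^{dm}$ equals $2$ when $d$ and $m$ are both odd and $0$ otherwise, so the inner sum collapses to $[m\text{ odd}]\cdot 2i\,g_{i,k}$, and the identity $\sum_{m\text{ odd}}w^{m}/m=\tfrac{1}{2}\log\tfrac{1+w}{1-w}$ reduces each exponential to $((1+z_{i}x^{i})/(1-z_{i}x^{i}))^{g_{i,k}}$, matching (b). At $y=0$, $1-(-y)^{dm}=1$ for all $d,m\geq 1$, so the inner sum is $i\,f_{i,k}$, and $\sum_{m\geq 1}w^{m}/m=\log(1/(1-w))$ gives the factor $(1/(1-z_{i}x^{i}))^{f_{i,k}}$, matching (c). The specializations on the left side are straightforward: setting $y=0$ turns $\frac{(1+y)^{2}t}{(y+t)(1+yt)},\frac{y+t}{1+yt}$ into $1,t$, and setting $y=1$ turns them into $\frac{4t}{(1+t)^{2}},1$. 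The main obstacle is bookkeeping: justifying that $\Theta_{y,k}$ may be applied termwise after $H$ is expanded in power sums, keeping the plethystic substitutions straight across the two-layer calculation ($L_{i}$ inside $H$), and tracking the $y\in\{0,1\}$ collapse of the inner M\"obius sums cleanly through the product of exponentials.
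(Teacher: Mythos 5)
Your proposal is correct and follows essentially the same route as the paper: reduce to computing $\Theta_{y,k}(P)$, evaluate the two-layer plethysm $h_{m_i}[L_i]$ via the power-sum expansion of $L_i$ to obtain the product of exponentials in (a), then specialize $y=1$ and $y=0$ and collapse the inner M\"obius sums to $g_{i,k}$ and $f_{i,k}$ for (b) and (c). The only cosmetic difference is that you write $P=\prod_i H[z_ix^iL_i]$ and defer the substitution $p_N\mapsto k(1-(-y)^N)$ to the very end, whereas the paper substitutes $k(1-\alpha)$ into $L_i$ first; the computation is identical.
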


Part (b) is due to Diaconis, Fulman, and Holmes \cite[Corollary 3.8]{Diaconis2013}
and part (c) to Fulman \cite[Theorem 1]{Fulman1998}.
\begin{proof}
It is easy to show that
\begin{multline}
\quad
\frac{1}{1-t}+\frac{1}{1+y}\sum_{n=1}^{\infty}\left(\frac{1+yt}{1-t}\right)^{n+1}F_{n}^{(\pk,\des)}\left(\frac{(1+y)^{2}t}{(y+t)(1+yt)},\frac{y+t}{1+yt},z_{1},z_{2},\dots\right)x^{n}\\
=\sum_{k=0}^{\infty}\Theta_{y,k}(P)t^{k}\label{e-pkdesz}
\quad
\end{multline}
(cf.~the proofs of Theorems \ref{t-pkdesst} and \ref{t-pkdesfix}
(a)), so we shall need to compute $\Theta_{y,k}(P)$. Since 
\[
L_{i}[k(1-\alpha)]=\frac{1}{i}\sum_{d\mid i}\mu(d)(k(1-\alpha^{d}))^{i/d},
\]
we have 
\begin{align}
P[k(1-\alpha)] & =\prod_{i=1}^{\infty}\sum_{m_{i}=0}^{\infty}h_{m_{i}}[L_{i}][k(1-\alpha)](z_{i}x^{i})^{m_{i}}\nonumber \\
 & =\prod_{i=1}^{\infty}\sum_{m_{i}=0}^{\infty}h_{m_{i}}(z_{i}x^{i})^{m_{i}}\Big[\frac{1}{i}\sum_{d\mid i}\mu(d)(k(1-\alpha^{d}))^{i/d}\Big]\nonumber \\
 & =\prod_{i=1}^{\infty}\exp\Bigg(\sum_{m_{i}=1}^{\infty}\frac{p_{m_{i}}}{m_{i}}\Big[\frac{1}{i}\sum_{d\mid i}\mu(d)(k(1-\alpha^{d}))^{i/d}\Big](z_{i}x^{i})^{m_{i}}\Bigg)\nonumber \\
 & =\prod_{i=1}^{\infty}\exp\Big(\sum_{m_{i}=1}^{\infty}\frac{(z_{i}x^{i})^{m_{i}}}{im_{i}}\sum_{d\mid i}\mu(d)(k(1-\alpha^{dm_{i}}))^{i/d}\Big).\label{e-Ppls}
\end{align}
Combining (\ref{e-pkdesz}) with (\ref{e-Ppls}) yields part (a).

To prove part (b), we set $y=1$ in (\ref{e-pkdesz}) to obtain 
\begin{align}
\frac{1}{1-t}+\frac{1}{2}\sum_{n=1}^{\infty}\left(\frac{1+t}{1-t}\right)^{n+1}F_{n}^{\pk}\left(\frac{4t}{(1+t)^{2}},z_{1},z_{2},\dots\right)x^{n} & =\sum_{k=0}^{\infty}\Theta_{1,k}(P)t^{k}.\label{e-pkz}
\end{align}
Setting $\alpha=-1$ in (\ref{e-Ppls}), we obtain $1-\alpha^{dm_{i}}=2$
if both $d$ and $m_{i}$ are odd and $1-\alpha^{dm_{i}}=0$ otherwise,
so
\begin{align}
\Theta_{1,k}(P) & =\prod_{i=1}^{\infty}\exp\Bigg(\sum_{\substack{m_{i}=1\\
m_{i}\text{ odd}
}
}^{\infty}\frac{(z_{i}x^{i})^{m_{i}}}{im_{i}}\sum_{\substack{d\mid i\\
d\text{ odd}
}
}\mu(d)(2k)^{i/d}\Bigg)\nonumber \\
 & =\prod_{i=1}^{\infty}\exp\Bigg(\sum_{\substack{m_{i}=1\\
m_{i}\text{ odd}
}
}^{\infty}\frac{2(z_{i}x^{i})^{m_{i}}}{m_{i}}\Bigg)^{g_{i,k}}\nonumber \\
 & =\prod_{i=1}^{\infty}\exp\Big(\sum_{m_{i}=1}^{\infty}\frac{(z_{i}x^{i})^{m_{i}}}{im_{i}}-\sum_{m_{i}=1}^{\infty}\frac{(-z_{i}x^{i})^{m_{i}}}{im_{i}}\Big)^{g_{i,k}}\nonumber \\
 & =\prod_{i=1}^{\infty}\exp\left(\log(1-z_{i}x^{i})-\log(1+z_{i}x^{i})\right)^{g_{i,k}}\nonumber \\
 & =\prod_{i=1}^{\infty}\left(\frac{1+z_{i}x^{i}}{1-z_{i}x^{i}}\right)^{g_{i,k}}.\label{e-Ppls-1}
\end{align}
Combining (\ref{e-pkz}) and (\ref{e-Ppls-1}) yields part (b). Part
(c) can be proved similarly to part (b), but we set $y=0$ and $\alpha=0$
instead of $y=1$ and $\alpha=-1$.
\end{proof}

\subsection{Counting permutations by left peaks, descents, and cycle type}

We now prove an analogue of Theorem \ref{t-pkdesct} for left peaks.
Let
\[
F_{n}^{(\lpk,\des)}(y,t,z_{1},z_{2},\dots)\coloneqq\sum_{\pi\in\mathfrak{S}_{n}}y^{\lpk(\pi)}t^{\des(\pi)}\prod_{i=1}^{\infty}z_{i}^{N_{i}(\pi)}
\]
and
\[
F_{n}^{\lpk}(t,z_{1},z_{2},\dots)\coloneqq\sum_{\pi\in\mathfrak{S}_{n}}t^{\lpk(\pi)}\prod_{i=1}^{\infty}z_{i}^{N_{i}(\pi)}.
\]
Also, let
\[
h_{i,k}\coloneqq\begin{cases}
\displaystyle\frac{(1+2k)^{i}-1}{2i}, & \text{if }i=2^{j}\text{ with }j\ge0\\
\displaystyle\frac{1}{2i}\sum_{\substack{d\mid i\\
d\:\text{odd}
}
}\mu(d)(1+2k)^{i/d}, & \text{otherwise.}
\end{cases}
\]
The quantity $h_{i,k}$ has a nice combinatorial interpretation as the number of primitive blinking necklaces of length $i$ from the alphabet $\{0\}\cup\pm [k] = \{0, \pm 1, \dots, \pm k \}$; these were first studied by Reiner \cite{Reiner1993} and have also appeared in work by Fulman \cite{Fulman2000}.

\begin{thm}
\label{t-lpkdesct} We have
\leqnomode
\begin{multline*}
\tag{a}
\quad\frac{1}{1-t}+\sum_{n=1}^{\infty}\frac{(1+yt)^{n}}{(1-t)^{n+1}}F_{n}^{(\lpk,\des)}\left(\frac{(1+y)^{2}t}{(y+t)(1+yt)},\frac{y+t}{1+yt},z_{1},z_{2},\dots\right)x^{n}\\
=\sum_{k=0}^{\infty}t^{k}\prod_{i=1}^{\infty}\exp\Big(\sum_{m_{i}=1}^{\infty}\frac{(z_{i}x^{i})^{m_{i}}}{im_{i}}\sum_{d\mid i}\mu(d)(1+k(1-(-y)^{dm_{i}}))^{i/d}\Big)\quad
\end{multline*}
and 
\[
\tag{b}
\frac{1}{1-t}+\sum_{n=1}^{\infty}\frac{(1+t)^{n}}{(1-t)^{n+1}}F_{n}^{\lpk}\left(\frac{4t}{(1+t)^{2}},z_{1},z_{2},\dots\right)x^{n}=\sum_{k=0}^{\infty}\frac{t^{k}}{1-z_{1}x}\prod_{i=1}^{\infty}\left(\frac{1+z_{i}x^{i}}{1-z_{i}x^{i}}\right)^{h_{i,k}}.
\]
\end{thm}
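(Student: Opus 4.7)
The plan is to mirror the proof of Theorem \ref{t-pkdesct} closely, substituting Theorem \ref{t-lpkdesst} for Theorem \ref{t-pkdesst}; the only essential difference is that the plethystic substitution must be preceded by $[X+1]$. First, by the natural multivariate refinement of Theorem \ref{t-lpkdesst} that tracks $\prod_i z_i^{N_i(\pi)}$ (proved exactly as Theorem \ref{t-lpkdesst}, since the quasisymmetric generating function $P$ defined above is symmetric in $x_1,x_2,\dots$), we have
\[
\frac{1}{1-t}+\sum_{n=1}^{\infty}\frac{(1+yt)^{n}}{(1-t)^{n+1}}F_{n}^{(\lpk,\des)}\left(\frac{(1+y)^{2}t}{(y+t)(1+yt)},\frac{y+t}{1+yt},z_{1},z_{2},\dots\right)x^{n}=\sum_{k=0}^{\infty}\Theta_{y,k}(P[X+1])t^{k}.
\]

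Next, I would compute $P[X+1][k(1-\alpha)]$ by following the derivation of \eqref{e-Ppls}. By associativity of plethysm together with the facts $p_{d}[X+1]=p_{d}+1$ and $p_{d}[k(1-\alpha)]=k(1-\alpha^{d})$, the Lyndon symmetric function \eqref{e-lyndonp} satisfies
\[
L_{i}[X+1][k(1-\alpha)]=\frac{1}{i}\sum_{d\mid i}\mu(d)(1+k(1-\alpha^{d}))^{i/d}.
\]
Expanding each $h_{m_i}[\,\cdot\,]$ via $\exp\bigl(\sum_{m_i\ge 1} \tfrac{p_{m_i}}{m_i}[\,\cdot\,]\bigr)$ and noting that the Adams operation $p_{m_i}=\psi_{m_i}$ sends $\alpha^d$ to $\alpha^{dm_i}$, one obtains
\[
P[X+1][k(1-\alpha)]=\prod_{i=1}^{\infty}\exp\Big(\sum_{m_{i}=1}^{\infty}\frac{(z_{i}x^{i})^{m_{i}}}{im_{i}}\sum_{d\mid i}\mu(d)(1+k(1-\alpha^{dm_{i}}))^{i/d}\Big).
\]
Setting $\alpha=-y$ establishes part (a).

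For part (b), I would specialize part (a) at $y=1$, i.e.\ $\alpha=-1$. Then $1-\alpha^{dm_i}$ equals $2$ when both $d$ and $m_i$ are odd, and equals $0$ otherwise. The sum over $d\mid i$ therefore splits as $\sum_{d\mid i,\,d\text{ odd}}\mu(d)(1+2k)^{i/d}+\sum_{d\mid i,\,d\text{ even}}\mu(d)$ for $m_i$ odd, and equals $\sum_{d\mid i}\mu(d)$ for $m_i$ even. Using Lemma \ref{l-mu} to evaluate $\sum_{d\mid i,\,d\text{ even}}\mu(d)$ and the standard identity $\sum_{d\mid i}\mu(d)=[i=1]$, one checks that for every $i\ge 2$ the inner sum is $2i\,h_{i,k}$ when $m_i$ is odd and $0$ when $m_i$ is even; the power-of-two case of $h_{i,k}$ is precisely what makes the contribution $-1$ from Lemma \ref{l-mu} cancel correctly. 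Thus the factor for each $i\ge 2$ simplifies as
\[
\exp\Big(2h_{i,k}\sum_{m\text{ odd}}\frac{(z_{i}x^{i})^{m}}{m}\Big)=\Big(\frac{1+z_{i}x^{i}}{1-z_{i}x^{i}}\Big)^{h_{i,k}}.
\]

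The main obstacle is the $i=1$ factor, where $\sum_{d\mid 1}\mu(d)=1$ contributes even for $m_1$ even, producing an extra piece $\sum_{m\text{ even}}(z_1 x)^m/m$. A direct computation then yields
\[
\exp\Big((1+2k)\sum_{m\text{ odd}}\frac{(z_{1}x)^{m}}{m}+\sum_{m\text{ even}}\frac{(z_{1}x)^{m}}{m}\Big)=\frac{(1+z_{1}x)^{k}}{(1-z_{1}x)^{k+1}}=\frac{1}{1-z_{1}x}\Big(\frac{1+z_{1}x}{1-z_{1}x}\Big)^{h_{1,k}},
\]
which accounts for the separated factor $1/(1-z_1 x)$ in the stated formula. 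Combining the $i=1$ factor with the product over $i\ge 2$ gives part (b).
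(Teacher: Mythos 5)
Your proof is correct and follows essentially the same route as the paper's: both reduce the left-hand sides to $\sum_{k}\Theta_{y,k}(P[X+1])t^{k}$, compute $L_{i}[X+1][k(1-\alpha)]$ and hence $P[X+1][k(1-\alpha)]$ via the exponential expansion of $h_{m_i}$, and then specialize at $\alpha=-1$ for part (b), using Lemma \ref{l-mu} to see that the power-of-two case of $h_{i,k}$ absorbs the $-1$. The only cosmetic difference is in part (b): the paper folds the extra even-$m_{1}$ contribution into a modified exponent $h_{1,k}'=h_{1,k}+\tfrac{1}{2}$ and simplifies the resulting product, whereas you evaluate the $i=1$ factor directly as $(1+z_{1}x)^{k}/(1-z_{1}x)^{k+1}$; both correctly produce the prefactor $1/(1-z_{1}x)$.
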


\begin{proof}
First, it is easy to show that
\begin{multline}
\qquad\frac{1}{1-t}+\sum_{n=1}^{\infty}\frac{(1+yt)^{n}}{(1-t)^{n+1}}F_{n}^{(\lpk,\des)}\left(\frac{(1+y)^{2}t}{(y+t)(1+yt)},\frac{y+t}{1+yt},z_{1},z_{2},\dots\right)x^{n}\\
=\sum_{k=0}^{\infty}\Theta_{y,k}(P[X+1])t^{k},\qquad\label{e-lpkdesz}
\end{multline}
so we proceed by computing $\Theta_{y,k}(P[X+1])$. We have
\begin{align*}
L_{i}[X+1][k(1-\alpha)] & =\frac{1}{i}\sum_{d\mid i}\mu(d)(1+p_{d})^{i/d}[k(1-\alpha)]\\
 & =\frac{1}{i}\sum_{d\mid i}\mu(d)(1+k(1-\alpha^{d}))^{i/d};
\end{align*}
thus{\allowdisplaybreaks
\begin{align}
P[X+1][k(1-\alpha)] & =\prod_{i=1}^{\infty}\sum_{m_{i}=0}^{\infty}h_{m_{i}}[L_{i}][X+1][k(1-\alpha)](z_{i}x^{i})^{m_{i}}\nonumber \\
 & =\prod_{i=1}^{\infty}\sum_{m_{i}=0}^{\infty}h_{m_{i}}(z_{i}x^{i})^{m_{i}}\Big[\frac{1}{i}\sum_{d\mid i}\mu(d)(1+k(1-\alpha^{d}))^{i/d}\Big]\nonumber \\
 & =\prod_{i=1}^{\infty}\exp\Bigg(\sum_{m_{i}=1}^{\infty}\frac{p_{m_{i}}}{m_{i}}\Big[\frac{1}{i}\sum_{d\mid i}\mu(d)(1+k(1-\alpha^{d}))^{i/d}\Big](z_{i}x^{i})^{m_{i}}\Bigg)\nonumber \\
 & =\prod_{i=1}^{\infty}\exp\Big(\sum_{m_{i}=1}^{\infty}\frac{(z_{i}x^{i})^{m_{i}}}{im_{i}}\sum_{d\mid i}\mu(d)(1+k(1-\alpha^{dm_{i}}))^{i/d}\Big).\label{e-P=00005BX+1=00005Dpls}
\end{align}
}Then part (a) follows from (\ref{e-lpkdesz}) and (\ref{e-P=00005BX+1=00005Dpls}).

To prove part (b), we first set $y=1$ in (\ref{e-lpkdesz}) to obtain 
\begin{align}
\frac{1}{1-t}+\sum_{n=1}^{\infty}\frac{(1+t)^{n}}{(1-t)^{n+1}}F_{n}^{\lpk}\left(\frac{4t}{(1+t)^{2}},z_{1},z_{2},\dots\right)x^{n} & =\sum_{k=0}^{\infty}\Theta_{1,k}(P[X+1])t^{k}.\label{e-lpkz}
\end{align}
Setting $\alpha=-1$ in (\ref{e-P=00005BX+1=00005Dpls}), we have
$1+k(1-\alpha^{dm_{i}})=1+2k$ if both $d$ and $m_{i}$ are odd and
$1+k(1-\alpha^{dm_{i}})=1$ otherwise. Hence, we have
\begin{multline*}
\quad\Theta_{1,k}(P[X+1])=\prod_{i=1}^{\infty}\exp\Bigg(\sum_{\substack{m_{i}\ge2\\
m_{i}\text{ even}
}
}\frac{(z_{i}x^{i})^{m_{i}}}{im_{i}}\sum_{d\mid i}\mu(d)\Bigg)\\
\times\exp\Bigg(\sum_{\substack{m_{i}\ge 1\\
m_{i}\text{ odd}
}
}\frac{(z_{i}x^{i})^{m_{i}}}{im_{i}}\Bigg(\sum_{\substack{d\mid i\\
d\text{ even}
}
}\mu(d)+\sum_{\substack{d\mid i\\
d\text{ odd}
}
}\mu(d)(1+2k)^{i/d}\Bigg)\Bigg).\quad
\end{multline*}
To simplify this expression, let us define 
\[
h_{i,k}^{\prime}\coloneqq\begin{cases}
\frac{1}{2}+h_{i,k}, & \text{if }i=1,\\
h_{i,k}, & \text{otherwise,}
\end{cases}
\]
and recall that $\sum_{d\mid i}\mu(d)=1$ if $i=1$ and $\sum_{d\mid i}\mu(d)=0$
otherwise. Applying this fact and Lemma \ref{l-mu}, we obtain {\allowdisplaybreaks
\begin{align}
\Theta_{1,k}(P[X+1]) & =\exp\Bigg(\sum_{\substack{m_{1}\geq2\\
m_{1}\text{ even}
}
}\frac{(z_{1}x)^{m_{1}}}{m_{1}}\Bigg)\prod_{i=1}^{\infty}\exp\Bigg(\sum_{\substack{m_{i}\geq1\\
m_{i}\text{ odd}
}
}\frac{2(z_{i}x^{i})^{m_{i}}}{m_{i}}h_{i,k}^{\prime}\Bigg)\nonumber \\
 & =\exp\left(\frac{1}{2}\log\frac{1}{1-z_{1}^{2}x^{2}}\right)\prod_{i=1}^{\infty}\exp\Bigg(\sum_{\substack{m_{i}\geq1\\
m_{i}\text{ odd}
}
}\frac{2(z_{i}x^{i})^{m_{i}}}{m_{i}}\Bigg)^{h_{i,k}^{\prime}}\nonumber \\
 & =\left(\frac{1}{1-z_{1}^{2}x^{2}}\right)^{\frac{1}{2}}\prod_{i=1}^{\infty}\left(\frac{1+z_{i}x^{i}}{1-z_{i}x^{i}}\right)^{h_{i,k}^{\prime}}\nonumber \\
 & =\left(\frac{1+z_{1}x}{(1-z_{1}^{2}x^{2})(1-z_{1}x)}\right)^{\frac{1}{2}}\prod_{i=1}^{\infty}\left(\frac{1+z_{i}x^{i}}{1-z_{i}x^{i}}\right)^{h_{i,k}}\nonumber \\
 & =\frac{1}{1-z_{1}x}\prod_{i=1}^{\infty}\left(\frac{1+z_{i}x^{i}}{1-z_{i}x^{i}}\right)^{h_{i,k}}.\label{e-P=00005BX+1=00005Dpls-1}
\end{align}
}Then combining (\ref{e-lpkz}) and (\ref{e-P=00005BX+1=00005Dpls-1})
yields part (b).
\end{proof}

We note that Theorem \ref{t-lpkdesct} (b) was recently discovered and proven independently by Fulman and Petersen \cite{Fulman2020}.

\subsection{Counting permutations by up-down runs and cycle type}

We end with the analogous formula for the polynomials
\[
F_{n}^{\udr}(t,z_{1},z_{2},\dots)\coloneqq\sum_{\pi\in\mathfrak{S}_{n}}t^{\udr(\pi)}\prod_{i=1}^{\infty}z_{i}^{N_{i}(\pi)}.
\]
\begin{thm}
\begin{multline*}
\quad\frac{1}{1-t}+\sum_{n=1}^{\infty}\frac{(1+t^{2})^{n}}{2(1-t)^{2}(1-t^{2})^{n-1}}F_{n}^{\udr}\left(\frac{2t}{1+t^{2}},z_{1},z_{2},\dots\right)x^{n}\\
=\sum_{k=0}^{\infty}t^{2k}\prod_{i=1}^{\infty}\left(\frac{1+z_{i}x^{i}}{1-z_{i}x^{i}}\right)^{g_{i,k}}+\sum_{k=0}^{\infty}\frac{t^{2k+1}}{1-z_{1}x}\prod_{i=1}^{\infty}\left(\frac{1+z_{i}x^{i}}{1-z_{i}x^{i}}\right)^{h_{i,k}}\quad
\end{multline*}
\end{thm}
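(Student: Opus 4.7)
The plan is to follow the same two-step strategy used in the proofs of Theorems \ref{t-pkdesct} and \ref{t-lpkdesct}, invoking Theorem \ref{t-udr} (with the natural multivariate cycle-type refinement, analogous to the way Theorem \ref{t-udrst} refines Theorem \ref{t-udr}) in place of Theorems \ref{t-pkdes} and \ref{t-lpkdes}. Recall that $P$ is the quasisymmetric generating function for all permutations weighted by length and cycle type, so the multivariate $\prod_i z_i^{N_i(\pi)}$ weighting fits into the refined scalar-product framework exactly as the statistic $\mathrm{st}$ does.

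The first step is to establish, by the same reasoning that produced \eqref{e-pkdesz} and \eqref{e-lpkdesz}, the plethystic reduction
\begin{multline*}
\frac{1}{1-t}+\sum_{n=1}^{\infty}\frac{(1+t^{2})^{n}}{2(1-t)^{2}(1-t^{2})^{n-1}}F_{n}^{\udr}\left(\frac{2t}{1+t^{2}},z_{1},z_{2},\dots\right)x^{n}\\
=\sum_{k=0}^{\infty}\Theta_{1,k}(P)\,t^{2k}+\sum_{k=0}^{\infty}\Theta_{1,k}(P[X+1])\,t^{2k+1}.
\end{multline*}
This is immediate from the cycle-type refinement of Theorem \ref{t-udr}: the two summands on the right correspond respectively to the two pieces of the decomposition $(1+tH)/(1-t^{2}EH)=1/(1-t^{2}EH)+t\cdot H/(1-t^{2}EH)$ that underlies the ribbon expansion in Lemma \ref{l-ribexp} (c), together with Lemma \ref{l-scalprodh} and Lemma \ref{l-spconst}.

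The second step is pure bookkeeping: I substitute the evaluations of $\Theta_{1,k}(P)$ and $\Theta_{1,k}(P[X+1])$ already obtained in equations \eqref{e-Ppls-1} and \eqref{e-P=00005BX+1=00005Dpls-1} during the proofs of Theorems \ref{t-pkdesct} (b) and \ref{t-lpkdesct} (b), namely
\begin{equation*}
\Theta_{1,k}(P)=\prod_{i=1}^{\infty}\left(\frac{1+z_{i}x^{i}}{1-z_{i}x^{i}}\right)^{g_{i,k}},\qquad \Theta_{1,k}(P[X+1])=\frac{1}{1-z_{1}x}\prod_{i=1}^{\infty}\left(\frac{1+z_{i}x^{i}}{1-z_{i}x^{i}}\right)^{h_{i,k}}.
\end{equation*}
Plugging these into the previous display yields the claimed identity verbatim. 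There is no new technical obstacle: all plethystic computations with Lyndon symmetric functions have already been carried out, and the $\udr$ formula simply assembles the even-$t$ and odd-$t$ parts from the peak and left peak analyses. The only minor point worth verifying is that the multivariate cycle-type refinement of Theorem \ref{t-udr} is valid, and this follows exactly as in the single-statistic case, since the scalar product extends linearly in the $z_i$ and $P$ is symmetric in the $x$-variables in each fixed total degree.
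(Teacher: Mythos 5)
Your proposal is correct and matches the paper's own proof essentially verbatim: the paper likewise reduces the left side to $\sum_{k}\Theta_{1,k}(P)t^{2k}+\sum_{k}\Theta_{1,k}(P[X+1])t^{2k+1}$ via the cycle-type refinement of Theorem \ref{t-udr}, and then substitutes the expressions for $\Theta_{1,k}(P)$ and $\Theta_{1,k}(P[X+1])$ already computed in the proofs of Theorems \ref{t-pkdesct}~(b) and \ref{t-lpkdesct}~(b). No differences worth noting.
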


\begin{proof}
It is easy to show that
\begin{multline*}
\quad\frac{1}{1-t}+\sum_{n=1}^{\infty}\frac{(1+t^{2})^{n}}{2(1-t)^{2}(1-t^{2})^{n-1}}F_{n}^{\udr}\left(\frac{2t}{1+t^{2}},z_{1},z_{2},\dots\right)x^{n}\\
=\sum_{k=0}^{\infty}\Theta_{1,k}(P)t^{2k}+\sum_{k=0}^{\infty}\Theta_{1,k}(P[X+1])t^{2k+1}.\quad
\end{multline*}
Then combining this equation with (\ref{e-Ppls-1}) and (\ref{e-P=00005BX+1=00005Dpls-1})
yields the result.
\end{proof}

\section{Conclusion}

In summary, we have derived general formulas for studying the joint distribution of the peak number and descent number, the joint distribution of the left peak number and the descent number, and the distribution of the number of up-down runs over any subset $\Pi\subseteq\mathfrak{S}_{n}$ whose quasisymmetric function $Q(\Pi)$ is symmetric. Furthermore, we have applied these results to produce more concrete formulas for distributions of these statistics over cyclic permutations, involutions, derangements, as well as jointly with the number of fixed points and with cycle type over all permutations.

There is a sizable literature on families of permutations whose quasisymmetric generating functions are symmetric. In addition to conjugacy classes of the symmetric group (i.e., permutations with fixed cycle type), these are known to include inverse descent classes and Knuth classes \cite{Gessel1993}, sets of permutations with fixed inversion number \cite{Adin2015}, and more recently, certain classes of pattern-avoiding permutations \cite{Bloom2020a,Bloom2020,Elizalde2017,Hamaker2020}.\footnote{In fact, most of these families of permutations have quasisymmetric generating functions which are not only symmetric but Schur-positive. Schur-positivity of quasisymmetric generating functions is not needed for our work, but has interesting implications for representation theory.} In ongoing work, the present authors are applying this property for inverse descent classes to study various distributions involving ``inverse descent statistics''. One of our preliminary results is a rederivation of the formula
\[
\frac{\sum_{\pi\in\mathfrak{S}_{n}}s^{\des(\pi)}t^{\des(\pi^{-1})}}{(1-s)^{n+1}(1-t)^{n+1}}=\sum_{j,k=0}^{\infty}{jk+n-1 \choose n}s^{j}t^{k}
\]
originally due to Carlitz, Roselle, and Scoville \cite{Carlitz1966} (see also \cite{Petersen2013}) for the joint distribution of the descent number and ``inverse descent number'' over $\mathfrak{S}_{n}$, and we have proved new formulas for similar ``two-sided'' bidistributions such as that of the peak number and ``inverse peak number'' over $\mathfrak{S}_{n}$. It is also worth using the methods developed in this paper to study other families of permutations with symmetric quasisymmetric generating functions, and the case of pattern avoidance classes appears particularly interesting.

\bigskip{}
\noindent \textbf{Acknowledgements.} We thank two anonymous referees for their thoughtful feedback and for pointing us to several relevant references from the literature. The second author also thanks Justin Troyka for helpful discussions.

\bibliographystyle{amsplain}
\addcontentsline{toc}{section}{\refname}\bibliography{bibliography}

\end{document}